\documentclass[12pt]{amsart}

\usepackage[T1]{fontenc}
\usepackage[utf8]{inputenc}
\usepackage{lmodern}
\usepackage{amsmath,amssymb,amsthm,mathtools}
\usepackage{mathrsfs}
\usepackage{xcolor}
\usepackage{tikz-cd}
\usepackage{enumitem}
\usepackage{geometry}
\usepackage{hyperref}
\usepackage{setspace}
\usepackage[numbers,sort&compress]{natbib}
\hypersetup{colorlinks=true, linkcolor=blue, citecolor=blue, urlcolor=blue}

\newcommand{\R}{\mathbb R}

\newcommand{\N}{\mathbb N}
\newcommand{\Z}{\mathbb Z}
\newcommand{\eps}{\varepsilon}
\newcommand{\Id}{\operatorname{Id}}
\newcommand{\vol}{\operatorname{vol}}
\newcommand{\diam}{\operatorname{diam}}
\newcommand{\Ric}{\operatorname{Ric}}
\newcommand{\Rm}{\operatorname{Rm}}

\newcommand{\reg}{\operatorname{reg}}

\newcommand{\Spec}{\operatorname{Spec}}
\newcommand{\Spin}{\operatorname{Spin}}
\newcommand{\SO}{\operatorname{SO}}

\newcommand{\cyl}{\operatorname{cyl}}
\newcommand{\ALE}{\operatorname{ALE}}

\theoremstyle{plain}
\newtheorem{theorem}{Theorem}[section]
\newtheorem{proposition}[theorem]{Proposition}
\newtheorem{lemma}[theorem]{Lemma}

\theoremstyle{remark}
\newtheorem{remark}[theorem]{Remark}

\numberwithin{equation}{section}

\title[Energy quantization for Dirac systems]{Energy quantization for Dirac systems over non-collapsed degenerating Einstein manifolds}

\author{Pan Chen}
\address{School of Mathematical Sciences, Shanghai Jiao Tong University\\ 800 Dongchuan Road \\ Shanghai, 200240 \\P. R. China}%
\email{chenpan\_sj@sjtu.edu.cn}%

\author{Youmin Chen}
\address{Department of Mathematics, Shantou University\\ 5 Cuifeng Road \\ Shantou, Guangdong, 515063 \\P. R. China}%
\email{youminchen@stu.edu.cn}%

\author{Miaomiao Zhu}
\address{School of Mathematical Sciences, Shanghai Jiao Tong University\\ 800 Dongchuan Road \\ Shanghai, 200240 \\P. R. China}%
\email{mizhu@sjtu.edu.cn}%
\date{\today}

\subjclass[2020]{53C25, 35J46, 35B44}
\keywords{Einstein manifolds; degenerating; Dirac system; energy identity.}

\begin{document}

\begin{abstract}
We study energy quantization for a class of Dirac systems on compact spin
Einstein manifolds of dimension \(n\). For a sequence of solutions to a
nonlinear Dirac system with uniformly bounded energy on a fixed spin Riemannian manifold, we first establish an energy identity theorem.
We then investigate the more complicated case of underlying domain manifolds being a sequence of non-collapsed degenerating spin Einstein manifolds. At an orbifold singular point, three types of bubble spinors can possibly appear, living respectively on \(\mathbb{R}^n\), on a Ricci-flat ALE bubble space, and on the flat cone \(\mathbb{R}^n/\Gamma\). By developing asymptotic analysis for solutions over degenerating neck regions, we establish that energy identity holds.
\end{abstract}

\maketitle
%\tableofcontents

\section{Introduction}

\vskip0.5cm

Let $M$ be an $n$-dimensional compact spin manifold equipped with a Riemannian metric
 $g$ and a spin structure $\Lambda : P_{\Spin}(M) \to P_{\SO}(M)$.
  Denote by $\mathbb{S}(M)$ the spinor bundle on $M$ and by $D_g$ the
   Dirac operator. For basic facts concerning spin structures and Dirac operators
    on manifolds, we refer the reader to Section \ref{section2}.
In this article, we mainly consider the compactness
problem for solutions of the following nonlinear Dirac system on $M$:
\begin{equation}\label{eq:intro-system}
 {D_g\psi^i=\sum_{j,l,t=1}^d H^i_{jlt}(x)\,
 \mathcal{N}_n\left( \langle \psi^j,\psi^l\rangle\right)
 \psi^t,
        \qquad i=1,\ldots,d,}
\end{equation}
where $\psi=(\psi^1,\cdots,\psi^d)\in (\Gamma({\mathbb S(M)}))^d$,
 {$H^i_{jlt}\in C^1(M, \mathbb R)$,}  
\[
\mathcal N_n(z)=
\begin{cases}
|z|^{\frac{1}{n-1}}, & n\geq 3,\\
\Theta(z), & n=2,
\end{cases}
\]
and in dimension two, \(\Theta\) denotes either of the two choices
\[
\Theta(z)=|z|
\qquad\text{or}\qquad
\Theta(z)=z.
\]

In dimension two, nonlinear Dirac equations arise naturally from
 the surface theory.
When $d=1$, we briefly recall how \eqref{eq:intro-system} appears
from the spinorial Weierstrass representation, see \cite{Kenmotsu1979,Friedrich1998,Taimanov2006}.
 Let
\[f=(x^1,x^2,x^3):M\to\mathbb R^3\] be a conformal immersion of
 a Riemann surface and let \(z=x+iy\) be a local conformal
 coordinate.  Since \((f_z,f_z)=0\), the vector
  \(f_z\in\mathbb C^3\) lies on the quadric
   \(y_1^2+y_2^2+y_3^2=0\).  A rational parametrization
   of this quadric introduces a spinor \(\psi=(\psi_1,\psi_2)\)
   by
\[
  x^1_z=\frac{i}{2}\left(\overline{\psi_2}^{\,2}+\psi_1^2\right),\qquad
  x^2_z=\frac{1}{2}\left(\overline{\psi_2}^{\,2}-\psi_1^2\right),\qquad
  x^3_z=\psi_1\overline{\psi_2}.
\]
The corresponding real coordinate one-forms are $(\omega^1, \omega^2, \omega^3)$.
On a simply connected domain the immersion is recovered from
\[f(p)=f(p_0)+\int_{p_0}^p(\omega^1,\omega^2,\omega^3).\]
The closedness of one-forms $\omega^k$ are equivalently written as the existence of a real potential \(U\) such that
\begin{equation}\label{linear-eq}
         D_{\mathbb{R}^2}\psi+ U\psi=0
\end{equation}
and conversely this Dirac equation makes the forms \(\omega^k\) closed.
The potential \(U\) is determined by the geometry of the surface.  If
\[
  ds^2=e^{2\alpha}dz\,d\bar z,
  \qquad
  e^\alpha=|\psi_1|^2+|\psi_2|^2,
\]
then
\[
  U=\frac{H e^\alpha}{2},
\]
where \(H\) is the mean curvature.  Therefore \eqref{linear-eq} becomes
\[
  D_{\mathbb{R}^2}\psi=-\frac{H}{2}|\psi|^2\psi.
\]
 For \(H=0\) one obtains the free Dirac equation, which is the spinorial form of the classical Weierstrass representation for minimal surfaces.
The same origin can be described intrinsically.  For an isometric immersion
\[f:(M,\tilde g)\to\mathbb R^3,\] the restriction of a parallel spinor in \(\mathbb R^3\) gives a constant length spinor \(\varphi\) on \((M,\tilde g)\) satisfying
\begin{equation}\label{linear-eq2}
        D_{\tilde g}\varphi=H\varphi,
  \qquad |\varphi|_{\tilde g}=1 .
\end{equation}
where $H$ is the mean curvature. If \(\tilde g=e^{2u}g\) and \(\psi=e^{u/2}\varphi\), the conformal covariance
\[
  D_{\tilde g}\varphi=e^{-3u/2}D_g(e^{u/2}\varphi),
  \qquad |\psi|_g^2=e^u,
\]
turns \eqref{linear-eq2} into
\[
  D_g\psi=H|\psi|_g^2\psi.
\]

Another geometric origin of \eqref{eq:intro-system} in dimension two comes
from Dirac-harmonic maps with a curvature term, which were first proposed and studied in \cite{ChenJostWang2007}. Let $(M,g)$ be a Riemann
surface with a fixed spin structure and let $\phi$ be a smooth
map from $M$ to a Riemannian manifold $(N , h)$
of dimension $d\geq 2$. In local coordinates $\{x_\alpha\}$
and $\{y^i\}$ on $M$ and $N$ respectively,
a section of the twisted
bundle $\mathbb{S}(M)\otimes \phi^{-1}TN$, can be written as
\[
    \psi=\psi^i\otimes \partial_{y^i}(\phi).
\]
The connection $\widetilde{\nabla} $ on $\mathbb{S}(M)\otimes \phi^{-1}TN$ is induced
by the spin connection on $\mathbb{S}(M)$ and the Levi-Civita
 connection of $N$. The corresponding Dirac operator
 along the map $\phi$ is given locally by
\[
\widetilde{D}_M\psi=
D_M \psi^i\otimes \partial_{y^i}(\phi)
+
\Gamma^i_{jk}(\phi)\nabla_{e_\alpha}\phi^j, e_\alpha\cdot \psi^k\otimes \partial_{y^i}(\phi),
\]
where $D_M$ denotes the usual Dirac operator on $M$ and $\Gamma_{jk}^i$ stands for the Christoffel
symbols of $N$.
Dirac-harmonic maps with a curvature term are critical points of the functional
\[
    L_c(\phi,\psi)
    =
    \frac12\int_M
    \left(
        |d\phi|^2+\langle \psi,\widetilde{D}_M\psi\rangle
        -\frac16 R_{ikjl}
        \langle \psi^i,\psi^j\rangle
        \langle \psi^k,\psi^l\rangle
    \right)dV_g ,
\]
This functional is dictated by the nonlinear
   supersymmetric $\sigma$ model in quantum field theory, one can refer to \cite{Deligne1999}.
 The spinor part of
its Euler--Lagrange equations is
\[
   \widetilde{D}_M\psi^i
    =
    -\frac13 R^i_{jlt}(\phi)
    \langle \psi^j,\psi^l\rangle \psi^t ,
    \qquad i=1,\ldots,d .
\]
where $R^i_{jlt}$ stands for a component of the curvature tensor of
$N$. In particular, if $\phi$ is a constant map, then the above equation reduces to
\[
    D_M \psi^i
    =
    -\frac13 R^i_{jlt}
    \langle \psi^j,\psi^l\rangle \psi^t ,
    \qquad i=1,\ldots,d .
\]
which is a Dirac equation of type \eqref{eq:intro-system}.

Physically, nonlinear Dirac equations \eqref{eq:intro-system} also arise in models motivated by
  quantum field theory, such as the Soler, Gross-Neveu, Thirring and
   related model, see \cite{Soler1970,Thirring1958,GrossNeveu1974} for more details.
Furthermore, in general dimensions, a classical example of equation \eqref{eq:intro-system}
comes from the spinorial Yamabe problem,
see, for example, \cite{Bartsch2021,Sire2023,Isobe2023,Isobe2024,Maalaoui2026,Ammann2003} and references therein.

We denote the energy of $\psi\in L^{\frac{2n}{n-1}}(M,{\mathbb S(M)})$ as
\[
        E(\psi)=\int_M |\psi|_g^{\frac{2n}{n-1}}\,dV_g .
\]
This functional is conformally invariant; namely, for any $u\in C^\infty(M)$ and a conformal
metric $g_1=e^{2u}g$, there exists a map between spinor bundles
$\beta_{g,g_1}:{\mathbb S(M,g)}\to {\mathbb S(M,g_1)}$ such that
\[
        \int_M |\psi|_g^{\frac{2n}{n-1}}\,dV_g
        =\int_M |\phi|_{g_1}^{\frac{2n}{n-1}}\,dV_{g_1},
        \qquad
        \phi=\beta_{g,g_1}\big(e^{-\frac{n-1}{2}u}\psi\big).
\]
Consequently, due to the possible concentration of energy at isolated points of the domain,
a sequence of solutions to \eqref{eq:intro-system} with uniformly bounded energy generally
does not possess strong compactness. To study the compactness issue, one must rescale around
the energy concentration points to obtain bubble solutions. Such bubbling phenomena are the spinorial counterpart
of the well-known lack of compactness in the Yamabe problem and other geometric variational problems. When the domain is a 2-dimensional Euclidean space, the blow-up theory for
\eqref{eq:intro-system} has been established through various methods. More precisely, for a
sequence of solutions with uniformly bounded energy, an energy identity holds, which yields
the compactness of the solution sequence modulo bubbles, see \cite{Zhu2016,Chen2024,ChenJostWang2008}.
However, for higher dimensions, no such result seems to be available; this is the first problem to be discussed in the present article. We
will first study the compactness, modulo finitely many bubbles, of a sequence of solutions
\eqref{eq:intro-system} with uniformly bounded energy on a fixed $n$-dimensional Spin
manifold. Our main results include the energy identity. The core of the proof lies in developing
a new three circle theorem for the Dirac operator. Our first results are stated as follows.

\begin{theorem}\label{thm:fixed}
Let $n\geq 2$, $\psi_k\in L^{\frac{2n}{n-1}}(M,{\mathbb S(M)})^d$ be a sequence of weak solutions to
\eqref{eq:intro-system} with uniformly bounded energy
\[
        E(\psi_k)=\int_M |\psi_k|_g^{\frac{2n}{n-1}}\,dV_g\le C,
\]
where $C>0$. Assume that $\psi_k$ converges to $\psi$ weakly, and
$p_1,\ldots,p_I$ is the set of blow-up points. Then there exists a subsequence, still denoted by
$\{\psi_k\}$, and finitely many solutions $\{\xi_t^l\}$ on $S^n$ of
\eqref{eq:intro-system} with $H^i_{jlt}=H^i_{jlt}(p_t)$, where
$t=1,\ldots,I$ and $l=1,\ldots,L_t$, such that
\[
        \lim_{k\to\infty}E(\psi_k)
        =E(\psi)+\sum_{t=1}^I\sum_{l=1}^{L_t}E(\xi_t^l).
\]
\end{theorem}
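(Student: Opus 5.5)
The plan follows the standard bubble-tree scheme: an $\varepsilon$-regularity estimate, then bubble extraction, and finally a no-neck-energy argument driven by a three circle theorem for the Dirac operator.

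\textbf{Step 1: small energy regularity.} We first prove that there is $\varepsilon_0>0$ with the following property. If $\int_{B_r(x)}|\psi|^{\frac{2n}{n-1}}\le\varepsilon_0$, then $\|\psi\|_{W^{1,p}(B_{r/2}(x))}$, suitably scaled, is bounded by $C\|\psi\|_{L^{\frac{2n}{n-1}}(B_r(x))}$ for every $p<\infty$. The nonlinearity satisfies $|\mathcal N_n(\langle\psi^j,\psi^l\rangle)\psi^t|\lesssim |\psi|^{\frac{n+1}{n-1}}$. Hence $D_g\psi=A\psi$ with $|A|\lesssim|\psi|^{\frac{2}{n-1}}\in L^n$, and $\|A\|_{L^n}^n\lesssim\varepsilon_0$. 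In dimension two $|A|\lesssim|\psi|^2\in L^1$ also holds, so the same structure applies. Elliptic $L^p$ theory for $D_g$ on balls, together with absorption of the small critical term and bootstrapping, gives the estimate. Consequently the blow-up set $\{p_1,\dots,p_I\}$, defined by energy concentration at least $\varepsilon_0$, is finite, and $\psi_k\to\psi$ in $C^1_{loc}(M\setminus\{p_t\})$. A removable singularity lemma, proved with the same estimate and conformal invariance, extends $\psi$ and the bubbles across points.

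\textbf{Step 2: bubble extraction.} Near each $p_t$ we use normal coordinates and choose the concentration scale $\lambda_k\to0$ and center $x_k\to p_t$ by the usual maximal energy criterion. We rescale via $\tilde\psi_k(y)=\lambda_k^{\frac{n-1}{2}}\psi_k(x_k+\lambda_k y)$. The rescaled spinors solve the system with coefficients $H(x_k+\lambda_k y)\to H(p_t)$ on metrics converging to the Euclidean one. By Step 1 they converge to a finite-energy solution on $\R^n$, which extends to $S^n$ by conformal invariance and removable singularities. Iterating this on the remaining concentration sets gives a finite bubble tree, because each bubble carries energy at least a uniform constant; this is a gap lemma, again from Step 1 applied on $S^n$. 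The energy then splits exactly into body, bubbles, and finitely many neck annuli $A_k=B_{\delta}(x_k)\setminus B_{R\lambda_k}(x_k)$ with $\lambda_k/\delta\to0$. On each neck the energy on every dyadic annulus tends to zero.

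\textbf{Step 3: no neck energy (main obstacle).} We must show $\lim_{\delta\to0,R\to\infty}\lim_k\int_{A_k}|\psi_k|^{\frac{2n}{n-1}}=0$. The key tool is the three circle theorem for the Dirac operator, which we prove first. Set $a_i=\|\psi\|_{L^2(S_i)}$ weighted by the appropriate power of $e^{i}$, where $S_i$ denotes the dyadic annulus $\{e^{-(i+1)L}\le|x|\le e^{-iL}\}$ for fixed large $L$. If $D\psi=A\psi$ on three consecutive annuli and $\|A\|_{L^n}$ there is sufficiently small, then $a_i\le e^{-\alpha L}(a_{i-1}+a_{i+1})$ for some $\alpha>0$. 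This follows from a contradiction/compactness argument reducing to the homogeneous equation $D_{\R^n}\psi=0$ on $\R^n\setminus\{0\}$. In polar form $D=\nu\cdot(\partial_r+\frac{n-1}{2r}-\frac1r D_{S^{n-1}})$, and separation of variables with the spectrum of the spherical Dirac operator, $\Spec(D_{S^{n-1}})\subset\{\pm(\frac{n-1}{2}+m):m\ge0\}$, expresses every solution as a sum of modes $r^{\mu}$. Their growth rates in the weighted norm are bounded away from the critical rate, and no mode has zero rate, which is the spinorial analogue of having no harmonic-function-type $\log r$ or constant neutral modes at this conformal scaling. The three circle inequality applied along the neck then yields exponential decay $a_i\lesssim e^{-\alpha L\min(i,N-i)}(a_0+a_N)$. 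Since the boundary values $a_0,a_N$ are small for large $R$ and small $\delta$, summing the geometric series gives vanishing $L^{\frac{2n}{n-1}}$ energy; we convert from $L^2$ to $L^{\frac{2n}{n-1}}$ via Step 1. The delicate points are two: the perturbation term $A$ together with the metric and $H$ variation, which are handled by the smallness of $\|A\|_{L^n}$ on each neck annulus, and the exact choice of weight so that the spectrum avoids zero in all dimensions, including $n=2$ with $\Theta(z)=z$. I expect this step to be the main work.
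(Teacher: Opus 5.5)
Your proposal is correct and follows essentially the same route as the paper. The shared steps are $\varepsilon$-regularity, the Ding--Tian reduction to necks with small energy on every annulus, a three circle lemma for the Dirac operator proved by contradiction from separation of variables with $\Spec(D_{S^{n-1}})\subset\{\pm(\frac{n-1}{2}+m)\}$, exponential decay along the neck, and the $L^\infty$ bound to pass from $L^2$ to $L^{\frac{2n}{n-1}}$.

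The differences are minor. You use weighted $L^2$ norms on Euclidean annuli instead of the conformal cylinder, and you absorb the metric error into a small-coefficient linear operator instead of imposing a relative smallness hypothesis on $\hat h$. You also allow any rate $\alpha<\frac{n-1}{2}$, which handles $n=2$ uniformly, whereas the paper needs a separate remark for that case. One small slip: in dimension two the relevant fact is $|\psi|^2\in L^2=L^n$, not $L^1$.
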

The main focus of this paper is the case in which the domain manifolds
themselves degenerate.  This is a different compactness problem from the
 fixed-domain case. In two dimensions, when domain
 Riemann spin surface converge to
  a possibly noncompact Riemann spin surface with all
punctures (if there are any) of Neveu-Schwarz type, for 1st order
Dirac systems, energy identity holds, see \cite{Chen2024,Zhu2016}.
 However, in many conformally invariant variational problems,
 allowing the domain to vary creates long neck regions where energy may persist
 even after all ordinary bubbles have been extracted.
 An example is a sequence of harmonic maps from degenerating Riemann surfaces,
 where energy identities may fail on long collars unless extra
  conditions are imposed, see \cite{Zhu,Zhu2009,LiuZhu2024}.
  Thus, for geometric PDEs on varying domains,
  one must understand not only the concentration of the unknown
  fields but also the geometric degeneration of the underlying manifolds.

In the present paper the varying domains are non-collapsed Einstein spin manifolds.
 The relevant compactness theory says that,
 under the conditions of a uniform upper bound on diameter and a uniform lower bound on volume, together with an $L^{n/2}$ curvature upper bound,
  a subsequence of $(M_k,g_k)$ converges in
  the Gromov-Hausdorff sense to a compact Einstein orbifold $(M_\infty,g_\infty)$,
  and the convergence is smooth in the Cheeger-Gromov sense
  away from finitely many orbifold points;
   see \cite{Anderson1989,Anderson1992,Bando,Nakajima1988,Nakajima1994,CheegerTian}.
    Near each orbifold point $x_a$, the curvature may concentrate.
    After choosing points $x_{a,k}\to x_a$ and a scale $r_k\to0$,
    the rescaled manifolds $(M_k,r_k^{-2}g_k,x_{a,k})$ converge to a complete,
    noncompact, Ricci-flat, non-flat ALE space $(M_a,h_a)$, possibly with orbifold
    singularities.  Repeating this process at the singular points of the ALE
    limits produces a finite ALE bubble tree. Thus a neighbourhood of such an orbifold
    point is decomposed, at a rough geometric level, into three regions: the regular part of the
orbifold limit, the ALE bubble region, and a long degenerating neck
which is asymptotic to a portion of the flat cone \(\mathbb R^n/\Gamma\).
This geometric decomposition has to be incorporated into the analysis
of the equation itself. The work on biharmonic maps from
non-collapsed degenerating Einstein \(4\)-manifolds \cite{ChenZhu2024} shows that, when
the domain metrics degenerate, the usual neck
analysis is no longer sufficient by itself.  The blow-up of the
solutions and the blow-up of the domain interact through the
neck geometry; in particular, one has to use the fact that there is no
curvature concentration on the degenerating necks and that the metrics
there are sufficiently close to flat cone metrics in suitable
coordinates.  Their analysis provides an important model for treating
geometric PDEs on domains whose geometry degenerates, see \cite{ChenZhu2024} for more details.

More precisely, let
\((M_k,g_k,\Lambda_k)\) be a sequence of smooth, closed
Einstein spin \(n\)-manifolds ($n\geq 4$) satisfying
\[
        \operatorname{Ric}_{g_k}=\lambda_k g_k,
        \qquad \sup_k |\lambda_k|<\mu .
\]
Assume moreover that there exist constants \(D,V,R>0\) such that
\begin{equation}\label{eq:intro-einstein-assumption}
        \operatorname{diam}(M_k,g_k)\le D,\qquad
        \operatorname{vol}(M_k,g_k)\ge V,\qquad
        \int_{M_k}|\operatorname{Rm}_{g_k}|^{\frac n2}\,dV_{g_k}\le R .
\end{equation}
Here \(\operatorname{Rm}_{g}\) denotes the Riemann curvature tensor of the
metric \(g\).
It is worth noting that, in dimension four, the last assumption in
\eqref{eq:intro-einstein-assumption} is automatically satisfied under the
uniform upper bound on the diameter, the uniform lower bound on the volume,
and the uniform bound on the Einstein constants, by the seminal work
\cite{Cheeger2015}. More precisely, there exists a constant \(R>0\),
depending only on these uniform bounds, such that
\[
        \int_{M_k}|\operatorname{Rm}_{g_k}|^{2}\,dV_{g_k}\le R .
\]
Let
\[
        \psi_k=(\psi_k^1,\ldots,\psi_k^d)
        \in \Gamma(\mathbb S(M_k,g_k))^d
\]
be solutions of  Dirac system
\begin{equation}\label{eq:varying-system}
        D_{g_k}\psi_k^i
        =
        \sum_{j,l,t=1}^d H
        |\langle\psi_k^j,\psi_k^l\rangle|^{\frac1{n-1}}\psi_k^t,
        \qquad i=1,\ldots,d,
\end{equation}
with
\begin{equation}\label{eq:intro-energy-bound-varying}
        \int_{M_k}|\psi_k|^{\frac{2n}{n-1}}\,dV_{g_k}\le C .
\end{equation}
At an orbifold singular point \(x_a\) of the limiting
orbifold, there are two possible scales: the geometric scale
\(r_k\), at which the ALE bubble appears, and the analytic scale $\lambda_k$ at
which the critical energy $E(\psi_k)$ concentrates.  The relative position
of these two scales determines the limiting domain of the rescaled
spinors. If the spinor concentration takes place at a scale much smaller than
the relevant geometric scale, the rescaled domains become Euclidean
and one obtains a bubble on \(\mathbb R^n\).  If the concentration
occurs at the ALE scale \(r_k\), the limiting spinor is defined on the
Ricci-flat ALE bubble space $M_a$.  Finally, if the concentration occurs
inside the degenerating neck, the limiting domain is the flat cone
\(\mathbb R^n/\Gamma\).  These three kinds of bubbles will be denoted
below by
\[
        \xi^A:\mathbb R^n\to  \Gamma(\mathbb{S}(\mathbb{R}^n))^d,\qquad
        \xi^B:M_{a}\to \Gamma(\mathbb{S}(M_a))^d,\qquad
        \xi^C:\mathbb R^n/\Gamma\to \Gamma(\mathbb{S}(\mathbb{R}^n/\Gamma))^d,
\]
respectively.  The main point of the degenerating analysis is to show
that, after all such bubbles have been extracted, there is no energy loss on the neck regions.
We state our main result in this paper as follows.

\begin{theorem}\label{thm:main}
Let $n\geq 4$, $(M_k,g_k,\Lambda_k)$ and $\psi_k$ be as above. Suppose that there is only one ALE
bubble space at the orbifold point $x_a$ and that there is at most one bubble spinor in each
case of the blow-up scheme discussed below. Then, up to a subsequence, we have the energy
identity
\begin{align*}
 \lim_{k\to\infty}\int_{B_{g_k}(x_{a,k},\delta_0)} |\psi_k|^{\frac{2n}{n-1}}\,dV_{g_k}
 =\int_{B_{g_\infty}(x_a,\delta_0)}
        |\psi_\infty|^{\frac{2n}{n-1}}\,dV_{g_\infty}
 +\sum_{a=A,B,C}E(\xi^a),
\end{align*}
where $\xi^A$ is a bubble spinor on $\R^n$, $\xi^B$ is a bubble
spinor on the ALE bubble space, $\xi^C$ is a bubble spinor on the flat cone
$\R^n/\Gamma$.
\end{theorem}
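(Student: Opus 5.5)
We follow the standard bubble-tree strategy, adapted to the degenerating geometry as in \cite{ChenZhu2024}. Fix the orbifold point $x_a$, the geometric scale $r_k\to0$ of the unique ALE bubble $(M_a,h_a)$, and the analytic scale $\lambda_k$ at which the energy concentrates. First we record the basic $\eps$-regularity for \eqref{eq:varying-system}. On a region of $(M_k,g_k)$ with small curvature in $L^{n/2}$ and harmonic radius bounded below at the relevant scale, if $\int_{B_{2r}}|\psi_k|^{\frac{2n}{n-1}}<\eps_0$, then the rescaled spinors are bounded in $C^{1,\alpha}$ on $B_r$. This follows from the elliptic $L^p$ estimates for $D_{g_k}$, which are uniform in harmonic coordinates since the metrics are $C^{1,\alpha}$ close to flat there, together with the subcriticality obtained by bootstrapping the term $|\psi|^{\frac{2}{n-1}}\psi$. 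Using this, we decompose $B_{g_k}(x_{a,k},\delta_0)$ into the following pieces.
\begin{itemize}[label={}]
\item (i) The body region $B(x_a,\delta_0)\setminus B(x_a,\delta)$, where smooth convergence to $(M_\infty,g_\infty)$ and the fixed-manifold analysis of Theorem~\ref{thm:fixed} give convergence to $\psi_\infty$ away from the point.
\item (ii) The ALE region $B(x_{a,k},Rr_k)$, where $(M_k,r_k^{-2}g_k)$ converges smoothly to $M_a$ away from finitely many points. This yields $\xi^B$, and any further concentration inside produces $\xi^A$ on $\R^n$ after rescaling by $\lambda_k\ll r_k$.
\item (iii) The degenerating neck $A_k=B(x_{a,k},\delta)\setminus B(x_{a,k},Rr_k)$, where by the absence of curvature concentration the metric in suitable coordinates is $C^{1,\alpha}$ close, at every scale, to the flat cone $\R^n/\Gamma$. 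Concentration at a scale $r_k\ll\lambda_k\ll1$ inside $A_k$ yields $\xi^C$ on the cone.
\end{itemize}
The case analysis follows the relative position of $\lambda_k$ and $r_k$ (namely $\lambda_k\ll r_k$, $\lambda_k\sim r_k$, $\lambda_k\gg r_k$). Since there is at most one bubble of each type, the neck regions left over are finitely many annuli: Euclidean annuli between $\xi^A$ and its base, and cone annuli $A(s_k,t_k)$ in the degenerating region, between $\xi^B$ and $\xi^C$ and between $\xi^C$ and the body.

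The energy identity then reduces to showing
\[
\lim_{R\to\infty}\lim_{\delta\to0}\lim_{k\to\infty}\int_{A(Rs_k,\delta t_k)}|\psi_k|^{\frac{2n}{n-1}}\,dV_{g_k}=0
\]
for each neck. By construction, every dyadic annulus $A(2^{j},2^{j+1})$ in such a neck carries small energy, so $\eps$-regularity gives pointwise smallness $|x|^{\frac{n-1}{2}}|\psi_k|(x)\le o(1)$ there. The key step is a three circle theorem for the Dirac operator on the cone, which is the same tool underlying Theorem~\ref{thm:fixed}. We work in log-polar coordinates $t=\log|x|$ with the weighted spinor $\phi=|x|^{\frac{n-1}{2}}\psi$. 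The flat-cone Dirac operator becomes $\partial_t+\mathcal{D}_{S^{n-1}/\Gamma}$ up to conjugation. The spectrum of the spherical Dirac operator on $S^{n-1}$ is $\pm(\frac{n-1}{2}+m)$, and its restriction to $\Gamma$-invariant spinors retains a gap $\ge\frac{n-1}{2}$ away from $0$, with no zero mode. Hence solutions of the linear model on the cylinder satisfy a strict log-convexity: for $L$ large, the $L^2$ norms $a_j$ over the unit slabs $[jL,(j+1)L]$ satisfy $a_j\le e^{-\beta L}\max(a_{j-1},a_{j+1})$ for some $\beta>0$.

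Next we show that this survives the perturbation. The error from the nonlinearity is $o(1)\,|\phi|$ by the pointwise smallness, and the error from the metric is bounded by $|g_k-g_{\text{cone}}|_{C^1}$ at scale $|x|$, which tends to $0$ uniformly on the neck. A contradiction/compactness argument then gives the three circle inequality for $\psi_k$ on all slabs. It follows that the $a_j$ decay exponentially from both ends of the neck, so that $\sum_j a_j$, and hence $\int|\psi_k|^{\frac{2n}{n-1}}$, is controlled by the boundary slabs. These are $o(1)$ as $R\to\infty$ and $\delta\to0$, giving the vanishing above and hence the identity.

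I expect the main obstacle to be the uniformity of the three circle lemma on the degenerating cone necks. The coordinates in which $g_k$ is close to the cone metric are only given scale by scale, via the decay estimates of the Einstein necks. We must also compare the spin structures and spinor bundles of $g_k$ with those of the cone, through the identification $\beta$ between metrics, and check that the identified spinors are $\Gamma$-equivariant with the correct lift, so that no zero mode appears. I would first try to derive a quantitative bound $|g_k-g_\Gamma|\le C(|x|^{\alpha}+(r_k/|x|)^{\alpha})$ from the ALE and orbifold expansions, and then run the three circle argument with an error summable over the dyadic scales.
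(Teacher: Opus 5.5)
Your proposal is correct and follows essentially the same route as the paper. It reduces, via the bubble--neck decomposition, to showing no energy is lost on the cone necks; it passes to the cylinder by the conformal change; and it proves a perturbed three-circle lemma by contradiction and compactness, resting on the spectral gap $|\lambda|\ge\frac{n-1}{2}$ of $D_{S^{n-1}}$ and on $\varepsilon$-regularity, before summing the two-sided exponential decay. The uniform closeness to the flat cone that you single out as the main obstacle is exactly what the paper takes as Theorem~\ref{thm:neck-coordinates}, whose weight $\eta_k(r)=(r_k/r)^{\varepsilon_5}+(r/r_\infty)^{\varepsilon_5}$ is the quantitative bound you propose to derive.
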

The general case of multiple bubble manifolds and multiple bubble spinors follows from induction
arguments.

We now briefly describe the organization of the paper.
In Section \ref{section2}, we recall some background on
spin geometry and the Dirac operator, and prove elliptic estimates for the Dirac equation. In Section \ref{section3},
 we establish the three-circles estimates and prove Theorem \ref{thm:fixed}. In Section \ref{section4}, we recall the bubble tree
 convergence of non-collapsed degenerating Einstein manifolds and perform the bubble-neck decomposition.
In Section \ref{section5}, we prove Theorem \ref{thm:main}.

\vskip1cm

\section{Preliminaries}\label{section2}

\vskip0.5cm

\subsection{Spin structures and Dirac operator}
We briefly recall the fundamental notions from spin geometry, referring to the book \cite{LM}
for a comprehensive treatment. Let $(M,g)$ be an oriented Riemannian manifold of dimension
$n$. Denote by $P_{\SO}(M,g)$ the oriented frame bundle. A spin structure on $(M,g)$ is a
pair $(P_{\Spin}(M,g),\Lambda)$, where $P_{\Spin}(M,g)$ is a $\Spin(n)$-principal
bundle and $\Lambda:P_{\Spin}(M,g)\to P_{\SO}(M,g)$ is a 2-fold covering map compatible
with the non-trivial 2-fold covering $\lambda:\Spin(n)\to \SO(n)$.
Let
\[
        \tau:\Spin(n)\longrightarrow \operatorname{End}(\mathbb{S}_n )
\]
be the $n$-dimensional complex spinor representation, where $\mathbb{S}_n $ is a complex vector space
of dimension $2^{[n/2]}$. This induces an associated vector bundle
\[
        {\mathbb S(M)}=P_{\Spin}(M,g)\times_\tau \mathbb{S}_n ,
\]
which is called the spinor bundle over $M$. {In the sequel we use the notation $\mathbb S(M,g)$, or simply $\mathbb S(M)$ when the metric is clear, for the spinor bundle over $M$.} The spinor bundle ${\mathbb S(M)}$ carries a natural
Clifford multiplication $\cdot$, a Hermitian metric $\langle\cdot,\cdot\rangle$, and a spin
connection $\nabla^s$ such that:
\begin{enumerate}[label=(\roman*)]
\item For all $X,Y\in TM$ and all $\phi\in {\mathbb S(M)}$,
\[
        X\cdot Y\cdot\phi+Y\cdot X\cdot\phi+2g(X,Y)\phi=0.
\]
\item For all $X\in TM$ and all $\phi_1,\phi_2\in {\mathbb S(M)}$,
\[
        \langle X\cdot\phi_1,\phi_2\rangle=-\langle\phi_1,X\cdot\phi_2\rangle.
\]
\item $\nabla^s$ is compatible with the Hermitian metric $\langle\cdot,\cdot\rangle$ in the sense that
\[
        X\langle\phi_1,\phi_2\rangle
        =\langle\nabla^s_X\phi_1,\phi_2\rangle+\langle\phi_1,\nabla^s_X\phi_2\rangle.
\]
\item For all $X,Y\in\Gamma(TM)$ and all $\phi\in\Gamma({\mathbb S(M)})$,
\[
        \nabla^s_X(Y\cdot\phi)=(\nabla_XY)\cdot\phi+Y\cdot\nabla^s_X\phi.
\]
\end{enumerate}
On the spinor bundle ${\mathbb S(M)}$, the Dirac operator $D_M$ is defined as the composition
\[
        \Gamma({\mathbb S(M)})\xrightarrow{\nabla^s}\Gamma(T^*M\otimes {\mathbb S(M)})
        \longrightarrow \Gamma(TM\otimes {\mathbb S(M)})\xrightarrow{\cdot}\Gamma({\mathbb S(M)}).
\]
In particular, if $\{e_1,\ldots,e_{ n}\}$ is a local positively-oriented orthonormal basis of
$TM$, then the Dirac operator can be expressed as
\[
        D_M\psi=\sum_{j=1}^{ n} e_j\cdot\nabla^s_{e_j}\psi,
        \qquad \psi\in C^\infty(M,{\mathbb S(M)}).
\]

\subsubsection{Conformal changes of spinor bundles}

We fix the following convention for conformal changes, since it is used both in the cylinder transformation and in the bubble rescalings.  Let $\tilde g=e^{2u}g$ on a spin manifold or on the regular part of a spin orbifold.  The bundle map
\[
        b: (TM,g)\longrightarrow (TM,\tilde g),
        \qquad b(X)=e^{-u}X,
\]
induces an isomorphism of spinor bundles
\[
        \beta_{g,\tilde g}:\mathbb S(M,g)\longrightarrow\mathbb S(M,\tilde g),
\]
which is fiberwise unitary.  With our Clifford convention,
\[
        \beta_{g,\tilde g}(X\cdot_g\psi)
        =e^{-u}X\cdot_{\tilde g}\beta_{g,\tilde g}(\psi).
\]
The spin connections are related by
\[
\nabla^{s,\tilde g}_X\beta_{g,\tilde g}(\psi)-
\beta_{g,\tilde g}(\nabla^{s,g}_X\psi)
=-\frac12\beta_{g,\tilde g}\big(X\cdot_g\nabla^g u\cdot_g\psi+X(u)\psi\big),
\]
and the conformal covariance of the Dirac operator is
\begin{equation}\label{eq:conformal-dirac-general}
        D_{\tilde g}\Big(\beta_{g,\tilde g}(e^{-\frac{n-1}{2}u}\psi)\Big)
        =e^{-\frac{n+1}{2}u}\beta_{g,\tilde g}(D_g\psi).
\end{equation}
Consequently the critical spinor energy is conformally invariant:
\[
        \int_M |\psi|_g^{\frac{2n}{n-1}}\,dV_g
        =\int_M
        \left|\beta_{g,\tilde g}(e^{-\frac{n-1}{2}u}\psi)\right|_{\tilde g}^{\frac{2n}{n-1}}\,dV_{\tilde g}.
\]

\subsection{Dirac operator on product manifolds}
For the purposes of this paper, we next review the construction of spinor bundles and Dirac
operators on product manifolds. More detailed material can be found, for example, in
\cite{LM,Sire2023}. Let $(M_1,g_1)$ and $(M_2,g_2)$ be two spin manifolds of dimensions
$m_1$ and $m_2$, respectively. This induces a unique spin structure on the product manifold
$(N=M_1\times M_2,g=g_1\oplus g_2)$. Denote by ${\mathbb S(M_1)}$ and ${\mathbb S(M_2)}$ the spinor bundles
of $M_1$ and $M_2$. The spinor bundle ${\mathbb S(N)}$ on $N$ can be constructed as
\[
{\mathbb S(N)}=
\begin{cases}
  ({\mathbb S(M_1)}\oplus {\mathbb S(M_1)})\otimes {\mathbb S(M_2)}, & \text{both }m_1\text{ and }m_2\text{ are odd},\\
  {\mathbb S(M_1)}\otimes {\mathbb S(M_2)}, & m_1\text{ is even}.
\end{cases}
\]
For $X\in TM_1$, $Y\in TM_2$, $\varphi\in\Gamma({\mathbb S(M_2)})$, and
\[
\psi=
\begin{cases}
 \psi_1\oplus\psi_2\in\Gamma({\mathbb S(M_1)}\oplus {\mathbb S(M_1)}), & \text{both }m_1\text{ and }m_2\text{ are odd},\\
 \psi\in\Gamma({\mathbb S(M_1)}), & m_1\text{ is even},
\end{cases}
\]
the Clifford multiplication on ${\mathbb S(N)}$ is defined by
\begin{equation}\label{eq:prod-cliff}
        (X\oplus Y)\cdot_g(\psi\otimes\varphi)
        =(X\cdot_{g_1}\psi)\otimes\varphi
        +(\omega_C^{M_1}\cdot_{g_1}\psi)\otimes(Y\cdot_{g_2}\varphi),
\end{equation}
where $\omega_C^{M_1}$ is the complex volume element. When both $m_1$ and $m_2$ are odd,
we set
\[
        X\cdot_{g_1}\psi=(X\cdot_{g_1}\psi_1)\oplus(-X\cdot_{g_1}\psi_2),
        \qquad
        \omega_C^{M_1}\cdot_{g_1}\psi=i(\psi_2\oplus-\psi_1).
\]
It is worth noting that the definition of the Clifford multiplication on ${\mathbb S(N)}$ is not unique;
however, different choices are essentially equivalent.

Let $\nabla^{s_1}$ and $\nabla^{s_2}$ be the Levi-Civita spin connections on ${\mathbb S(M_1)}$ and
${\mathbb S(M_2)}$, respectively. Then
\[
        \nabla^s=\nabla^{s_1}\otimes\Id_{{\mathbb S(M_2)}}+
        \Id_{{\mathbb S(M_1)}}\otimes\nabla^{s_2}
\]
defines the tensor product connection. Choose local orthonormal frames
$\{X_1,\ldots,X_{m_1}\}$ for $(M_1,g_1)$ and $\{Y_1,\ldots,Y_{m_2}\}$ for $(M_2,g_2)$. Then
$\{X_1\oplus0,\ldots,X_{m_1}\oplus0,0\oplus Y_1,\ldots,0\oplus Y_{m_2}\}$ is a local
orthonormal frame on $N$. Using \eqref{eq:prod-cliff}, the Dirac operator on $N$ is given by
\[
\begin{aligned}
D_N&:=\sum_{j=1}^{m_1}(X_j\oplus0)\cdot_g\nabla^s_{X_j\oplus0}
      +\sum_{j=1}^{m_2}(0\oplus Y_j)\cdot_g\nabla^s_{0\oplus Y_j} \\
&=\widehat D_{M_1}\otimes\Id_{{\mathbb S(M_2)}}+
(\omega_C^{M_1}\cdot_{g_1}\Id_{{\mathbb S(M_1)}})\otimes D_{M_2},
\end{aligned}
\]
where
\[
\widehat D_{M_1}=
\begin{cases}
D_{M_1}\oplus(-D_{M_1}), & \text{both }m_1\text{ and }m_2\text{ are odd},\\
D_{M_1}, & m_1\text{ is even}.
\end{cases}
\]

\subsection{The Bourguignon-Gauduchon trivialization}
In this subsection, we employ the Bourguignon-Gauduchon trivialization and the conformal
invariance of the Dirac operator to transform the Dirac equation on a spin manifold $M$ into a
Dirac equation on an $n$-dimensional cylinder. The Bourguignon-Gauduchon trivialization
aims to locally identify spinors for different metrics on $M$. It was introduced by
Bourguignon and Gauduchon in \cite{BG}.

Fix a point $q\in M$ and let $(x_1,\ldots,x_n)$ be Riemannian normal coordinates defined
via the exponential map $\exp_q:U\subset T_qM\to V\subset M$. For
$p=\exp_q(x)\in V$, set $G(p)=(g_{ij}(p))$, where $g_{ij}(p)=g(\partial_i,\partial_j)$.
Define $B(p)=(b_i^j(p))$ as the unique symmetric positive-definite matrix satisfying
$B(p)^2=G(p)^{-1}$, which induces an isometry
\[
        B(p):(\R^n\simeq T_{\exp_q^{-1}(p)}U,g_{\R^n})\longrightarrow (T_pV,g(p))
\]
given by
\[
        B(p):(a^1,\ldots,a^n)\longmapsto \sum_{i,j}b_i^j(p)a^i\partial_j(p).
\]
This further induces a commutative diagram of  \(\mathrm{SO}(n)\)-principal bundles:
\[
\begin{tikzcd}
P_{\mathrm{SO}}(U, g_{\mathbb{R}^n}) \arrow[r, "\eta"] & P_{\mathrm{SO}}(V, g) \\
U \subset T_q M \arrow[r, "\exp_q"] \arrow[u, leftarrow] & V \subset M \arrow[u, leftarrow],
\end{tikzcd}
\]
where \(\eta(e_1, \dots, e_n) = (Be_1, \dots, Be_n)\)
for an oriented frame $(e_1, \dots, e_n)$ on $U$.
Since \(\eta\) is \(\mathrm{SO}(n)\)-equivariant, it can be lifted to an isomorphism between the corresponding  $\mathrm{Spin}(n)$-principal bundles:

\[
\begin{tikzcd}
    P_{\mathrm{Spin}}(U, g_{\mathbb{R}^n}) \arrow[r, "\bar{\eta}"] & P_{\mathrm{Spin}}(V, g) \\
P_{\mathrm{SO}}(U, g_{\mathbb{R}^n}) \arrow[r, "\eta"]\arrow[u, leftarrow] & P_{\mathrm{SO}}(V, g)\arrow[u, leftarrow] \\
U \subset T_q M \arrow[r, "\exp_q"] \arrow[u, leftarrow] & V \subset M \arrow[u, leftarrow].
\end{tikzcd}
\]
This isomorphism induces an isometric isomorphism between the spinor bundles:
\[
\begin{tikzcd}
\mathbb{S}(U, g_{\mathbb{R}^n}):=  P_{\mathrm{Spin}}(U, g_{\mathbb{R}^n})\times_\tau \mathbb{S}_n
 \arrow[r, "\bar{}"] & \mathbb{S}(V, g):= P_{\mathrm{Spin}}(V, g)\times_\tau \mathbb{S}_n
\end{tikzcd}
\]
explicitly given by
\[
\psi = [s, \varphi] \longmapsto  [\bar{\eta}(s), \varphi]=:\bar{\psi},
\]
where \([s, \varphi]\) denotes the equivalence class of \((s, \varphi)\) in \(P_{\mathrm{Spin}}(U, g_{\mathbb{R}^n})\times\mathbb{S}_n\).

Under the Bourguignon–Gauduchon trivialization, the transformation of the Dirac operator is
given by the following proposition, see Section 6.3 in \cite{Isobe} and Proposition 3.2 in \cite{AGHM}, for details.
\begin{proposition}\label{prop:BG}
Denote by $D_{\R^n}$ and $D_M$ the Dirac operators acting on
$\Gamma({\mathbb S(U,g_{\R^n})})$ and $\Gamma({\mathbb S(V,g)})$, respectively. Then
\begin{equation}\label{eq:BG-formula}
        D_M\bar\psi=
        \overline{D_{\R^n}\psi}+\mathbf{W}\cdot\bar\psi+\mathbf{V}\cdot\bar\psi+
        \sum_{i,j}\mathbf{Q}_{ij}\,\partial_i\cdot \overline{\nabla_{\partial_j}\psi},
\end{equation}
where
\[
        \mathbf{W}=\frac{1}{4}\sum_{\substack{i,j,k\\ i\ne j\ne k}}
        b_i^r(\partial_rb_j^l)(b^{-1})_l^k e_i\cdot e_j\cdot e_k=O(r^3),
\]
\[
        \mathbf{V}=\frac12\sum_{i,k}\widetilde\Gamma^i_{ik}e_k
        =-\frac14(\Ric)_{\alpha k}x^\alpha e_k+O(r^2)=O(r),
\]
and
\[
        \mathbf{Q}_{ij}=b_i^j-\delta_i^j=O(r^2).
\]
\end{proposition}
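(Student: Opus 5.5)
We prove Proposition \ref{prop:BG} by a direct computation: express the spin connection of $g$ in the Bourguignon--Gauduchon frame $\{Be_i\}$, then expand the matrix $B$ in normal coordinates. Set $E_i=B e_i=\sum_j b_i^j\partial_j$; these form a $g$-orthonormal frame on $V$. By construction of $\bar\eta$, the spinor $\bar\psi$ has, with respect to the lifted frame $\bar\eta(s)$, the same component functions $\varphi$ that $\psi$ has with respect to the standard flat frame $s$.

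\emph{Step 1: connection forms in the frame $E_i$.} For this frame the spin connection is
\[
\nabla^s_X\bar\psi=\overline{X(\varphi)}+\frac14\sum_{j,k}g(\nabla_XE_j,E_k)\,E_j\cdot E_k\cdot\bar\psi .
\]
Inserting this into $D_M\bar\psi=\sum_iE_i\cdot\nabla^s_{E_i}\bar\psi$, the derivative part gives $\sum_i E_i\cdot \overline{E_i(\varphi)}$. Writing $E_i=\partial_i+\sum_j(b_i^j-\delta_i^j)\partial_j$, this equals $\overline{D_{\R^n}\psi}$ plus the term $\sum\mathbf Q_{ij}\,\partial_i\cdot\overline{\nabla_{\partial_j}\psi}$. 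The indexing convention is fixed by the symmetry of $B$.

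The connection part is $\frac14\sum_{i,j,k}\Gamma_{ijk}\,E_i\cdot E_j\cdot E_k$, where $\Gamma_{ijk}=g(\nabla_{E_i}E_j,E_k)$. We split it according to whether the indices are pairwise distinct:
\begin{itemize}
\item The totally antisymmetric part, with $i,j,k$ distinct, gives $\mathbf W$. Expressing $\Gamma_{ijk}$ via the Koszul formula in terms of $b$, $\partial b$ and $b^{-1}$ yields the stated formula for $\mathbf W$.
\item The part with $i=j$ or $i=k$ collapses, by the Clifford relations, to a vector $\frac12\sum_{i,k}\widetilde\Gamma^i_{ik}e_k$. This is $\mathbf V$.
\end{itemize}

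\emph{Step 2: asymptotics.} In normal coordinates,
\[
g_{ij}=\delta_{ij}-\tfrac13R_{i\alpha j\beta}x^\alpha x^\beta+O(r^3),
\]
so $b_i^j=\delta_i^j+\tfrac16R_{i\alpha j\beta}x^\alpha x^\beta+O(r^3)$. This gives $\mathbf Q_{ij}=O(r^2)$ and $\partial b=O(r)$. Hence $\widetilde\Gamma=O(r)$, and its trace is
\[
\tfrac12\sum_i\widetilde\Gamma^i_{ik}=-\tfrac14\Ric_{\alpha k}x^\alpha+O(r^2).
\]
This comes from differentiating the quadratic term and contracting. It gives the formula for $\mathbf V$.

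For $\mathbf W$, the leading contribution to the fully antisymmetrized coefficient is proportional to $R_{i\alpha j\beta}$ antisymmetrized in $(i,j,k)$. It vanishes by the first Bianchi identity together with the symmetry of $R_{i\alpha j\beta}$ under $(i,j)\leftrightarrow(\alpha,\beta)$. The same mechanism kills the next order, which involves $\nabla R$ and is again symmetric in the relevant pair. This leaves $O(r^3)$.

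\emph{Main obstacle.} The hardest part is this cancellation in $\mathbf W$, i.e.\ checking that the antisymmetrization kills both the $O(r)$ and the $O(r^2)$ terms. I would verify it by computing explicitly with the expansion of $g$ up to cubic order, including the $\nabla R$ terms. Alternatively, this is exactly the content of the cited computations in \cite{Isobe,AGHM}, which we may invoke.
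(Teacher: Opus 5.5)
Your proposal is correct and is essentially the computation of \cite[Proposition 3.2]{AGHM}, which the paper cites in place of a proof: you express the spin connection in the frame $E_i=Be_i$, split the connection term into its totally antisymmetric part ($\mathbf W$) and its trace part ($\mathbf V$), and expand $B=G^{-1/2}$ in normal coordinates. The step you flag as the main obstacle is in fact immediate, with no need for the Bianchi identity or the $\nabla R$ terms: since $B$ is symmetric, $\partial_i b_j^k$ is exactly symmetric in $(j,k)$ and is killed by the antisymmetrization, so the only contributions to $\mathbf W$ are the corrections $(b_i^r-\delta_i^r)\,\partial_r b_j^l\,(b^{-1})_l^k$ and $\partial_i b_j^l\,\big((b^{-1})_l^k-\delta_l^k\big)$, which are $O(r^2)\cdot O(r)=O(r^3)$.
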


Using the Bourguignon-Gauduchon trivialization and the transformation formula above, we
convert equation \eqref{eq:intro-system} on the manifold into a Dirac equation on Euclidean
space. Assume that $\bar\psi\in\Gamma({\mathbb S(V)})^d$ satisfies \eqref{eq:intro-system}. Via the
trivialization, this corresponds to a spinor $\psi\in\Gamma({\mathbb S(U)})^d$ on the Euclidean domain
$U$. Applying Proposition~\ref{prop:BG}, we obtain an equivalent equation on $U$:
\begin{equation}\label{eq:Euclidean-approx}
        D_{\R^n}\psi^i=\sum_{j,l,t=1}^d H^i_{jlt}|\langle\psi^j,\psi^l\rangle|^{\frac1{n-1}}
        \psi^t+h^i,
\end{equation}
where
\begin{equation}\label{eq:h-fixed-bound}
        {|h|\le C r\big(|\psi|+|\nabla\psi|\big).}
\end{equation}

Next, we employ the conformal invariance of the Dirac operator to transform equation
\eqref{eq:Euclidean-approx} into a Dirac equation on a cylinder. For
$(t,\theta)\in[-\ln\delta,-\ln(\lambda_kR)]\times S^{n-1}$, let $(r,\theta)$ be polar coordinates
on $\R^n$ and define
\[
        f:\Sigma=[-\ln\delta,-\ln(\lambda_kR)]\times S^{n-1}\longrightarrow U,
        \qquad f(t,\theta)=(e^{-t},\theta),
\]
where $\Sigma$ is endowed with the product metric $g_{\cyl}=dt^2+d\theta^2$. The map $f$ is
a conformal diffeomorphism and $(f^{-1})^*g_{\cyl}=r^{-2}ds^2$.  This yields an isomorphism from \((T\Sigma, e^{-2t}g_{\mathrm{cyl}})\) to \((TU, g_{\mathbb{R}^n})\), which may be viewed as an isomorphism of \(\mathrm{SO}(n)\)-principal bundles. It lifts to an isomorphism of \(\mathrm{Spin}(n)\)-principal bundles and induces an isometric isomorphism

\[
F:\mathbb{S}(\Sigma, g_{\mathrm{cyl}})\longrightarrow \mathbb{S}(U, g_{\mathbb{R}^n}).
\]
Combining with the previous construction, we obtain the following commutative diagrams:

\[
\begin{tikzcd}
    P_{\mathrm{Spin}}(\Sigma, g_{\mathrm{cyl}})\arrow[r, "\bar{\zeta }"]&
    P_{\mathrm{Spin}}(U, g_{\mathbb{R}^n}) \arrow[r, "\bar{\eta}"] & P_{\mathrm{Spin}}(V, g) \\
    P_{\mathrm{SO}}(\Sigma, g_{\mathrm{cyl}}) \arrow[r, "\zeta"]\arrow[u, leftarrow] &
    P_{\mathrm{SO}}(U, g_{\mathbb{R}^n}) \arrow[r, "\eta"]\arrow[u, leftarrow] & P_{\mathrm{SO}}(V, g)\arrow[u, leftarrow] \\
\Sigma \arrow[r, "f"] \arrow[u, leftarrow] &
U \subset T_q M \arrow[r, "\exp_q"] \arrow[u, leftarrow] & V \subset M \arrow[u, leftarrow]
\end{tikzcd}
\]
and
\[
\begin{tikzcd}
    \mathbb{S}(\Sigma, g_{\mathrm{cyl}})\arrow[r, "F"]&
    \mathbb{S}(U, g_{\mathbb{R}^n})\arrow[r, "\bar{\eta}"]& \mathbb{S}(V, g) \\
\Sigma \arrow[r, "f"] \arrow[u, leftarrow] &
U \subset T_q M \arrow[r, "\exp_q"] \arrow[u, leftarrow] & V \subset M \arrow[u, leftarrow]
\end{tikzcd}
\]
Moreover, with respect to the metrics $g_{\R^n}$ and $g_{\cyl}$, the Dirac operators satisfy
\begin{equation}\label{eq:conformal-cylinder}
        D_\Sigma\Big(F^{-1}(e^{-\frac{n-1}{2}t}\psi)\Big)
        =e^{-\frac{n+1}{2}t}F^{-1}(D_{\R^n}\psi).
\end{equation}
Hence, if $\psi$ is a solution of \eqref{eq:Euclidean-approx}, then
$\phi=F^{-1}(e^{-\frac{n-1}{2}t}\psi)$ satisfies
\begin{equation}\label{eq:cylinder-equation}
        D_\Sigma\phi^i=\sum_{j,l,t=1}^d H^i_{jlt}|\langle\phi^j,\phi^l\rangle|^{\frac1{n-1}}
        \phi^t+\hat h^i,
\end{equation}
where $\hat h=e^{-\frac{n+1}{2}t}F^{-1}h$, and
\[
        \int_\Sigma |\phi|_{g_{\cyl}}^{\frac{2n}{n-1}}\,dV_{g_{\cyl}}
        =\int_U |\psi|_{g_{\R^n}}^{\frac{2n}{n-1}}\,dV_{g_{\R^n}}.
\]

\subsection{Elliptic estimates for the Dirac equation}
This subsection collects some results about elliptic estimates for the Dirac equation.
We begin with the following $\varepsilon$-regularity theorem for the Dirac equation.
\begin{lemma}\label{lem:epsilon-reg-fixed}
Let $n\ge 2$ and $p>1$. Let
\[
        \psi: B_{\delta}\setminus B_{r_jR}\longrightarrow \mathbb S(\mathbb R^n)
\]
be a solution of
\begin{equation}\label{eq:Dirac-annular}
        D_{\mathbb R^n}\psi
        =
        |\psi|^{\frac{2}{n-1}}\psi
        +
        a(y)\psi
        +
        b(y)\nabla\psi
        \qquad
        \text{on } B_{\delta}\setminus B_{r_jR},
\end{equation}
where $a(y)$ and $b(y)$ are smooth coefficient functions satisfying
\begin{equation}\label{eq:ab-assumption}
        |a(y)|+|b(y)|\le C_0 |y|
        \qquad
        \text{on } B_{\delta}.
\end{equation}
Then there exist constants $\delta_0$ and $\varepsilon_0$,
such that if $0<\delta\le \delta_0$ and
\(
        r_jR < r/2 < 5r/2 < \delta,
\)
and if
\begin{equation}\label{eq:small-annular-energy}
        \int_{B_{5r/2}\setminus B_{r/2}}
        |\psi|^{\frac{2n}{n-1}}\,dy
        <
        \varepsilon_0,
\end{equation}
then
\begin{equation}\label{eq:scale-invariant-W1p}
 r^{\frac{n-1}{2}-\frac np}
 \|\psi\|_{L^p(B_{2r}\setminus B_r)}
 +
 r^{\frac{n+1}{2}-\frac np}
 \|\nabla\psi\|_{L^p(B_{2r}\setminus B_r)}
 \le
 C_p
 \|\psi\|_{L^{\frac{2n}{n-1}}(B_{5r/2}\setminus B_{r/2})}.
\end{equation}
\end{lemma}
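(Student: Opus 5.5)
We plan to prove the scale-invariant estimate by rescaling the annulus to unit size, running a standard $\varepsilon$-regularity bootstrap for the Dirac operator, and then scaling back. Set $\tilde\psi(x)=r^{\frac{n-1}{2}}\psi(rx)$ on $\Omega=B_{5/2}\setminus B_{1/2}$. Since $D_{\R^n}$ is homogeneous of degree one, $\tilde\psi$ solves
\[
D_{\R^n}\tilde\psi=|\tilde\psi|^{\frac{2}{n-1}}\tilde\psi+\tilde a\,\tilde\psi+\tilde b\,\nabla\tilde\psi,\qquad \tilde a(x)=r\,a(rx),\quad \tilde b(x)=b(rx).
\]
By \eqref{eq:ab-assumption} we get $|\tilde a|+|\tilde b|\le C_0\, r|x|\le 3C_0\delta_0$ on $\Omega$. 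The $L^{\frac{2n}{n-1}}$ norm is scale invariant, so $\|\tilde\psi\|_{L^{\frac{2n}{n-1}}(\Omega)}^{\frac{2n}{n-1}}<\varepsilon_0$. The factors $r^{\frac{n-1}{2}-\frac np}$ and $r^{\frac{n+1}{2}-\frac np}$ in \eqref{eq:scale-invariant-W1p} are exactly what the scaling produces. It therefore suffices to show
\[
\|\tilde\psi\|_{W^{1,p}(B_2\setminus B_1)}\le C_p\|\tilde\psi\|_{L^{\frac{2n}{n-1}}(\Omega)}.
\]

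\textbf{Local $L^q$ estimate.} Take a cutoff $\eta$ supported in a slightly smaller annulus and apply the $L^q$ theory for $D_{\R^n}$ on $\R^n$, i.e. Calderón–Zygmund: $\|\nabla u\|_{L^q}\le C_q\|D u\|_{L^q}$ for compactly supported $u$, with $1<q<\infty$. This gives
\[
\|\nabla(\eta\tilde\psi)\|_{L^q}\le C_q\Big(\big\||\tilde\psi|^{\frac{2}{n-1}}\eta\tilde\psi\big\|_{L^q}+3C_0\delta_0\|\eta\tilde\psi\|_{W^{1,q}}+\|\nabla\eta\,\tilde\psi\|_{L^q}\Big).
\]
Choosing $\delta_0$ so small that $C_qC_0\delta_0\ll1$ absorbs the $\tilde b\nabla$ term; this is where $\delta\le\delta_0$ is used.

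\textbf{Nonlinear term.} By Hölder, $\||\tilde\psi|^{\frac2{n-1}}\eta\tilde\psi\|_{L^q}\le\|\tilde\psi\|_{L^{\frac{2n}{n-1}}}^{\frac2{n-1}}\|\eta\tilde\psi\|_{L^{q^*}}$ with $\frac1q=\frac1n+\frac1{q^*}$. Sobolev then gives $\|\eta\tilde\psi\|_{L^{q^*}}\le C\|\nabla(\eta\tilde\psi)\|_{L^q}$, so smallness $\varepsilon_0^{1/n}$ absorbs this term too. This applies only for $q<n$, and it shows $\tilde\psi\in W^{1,q}$ on a smaller annulus with norm controlled by $\|\tilde\psi\|_{L^q}$ on a larger one.

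\textbf{Bootstrap.} Start with $q=\frac{2n}{n+1}$, for which $q^*=\frac{2n}{n-1}$, and iterate over a finite chain of nested annuli shrinking from $\Omega$ to $B_2\setminus B_1$. After $\tilde\psi\in L^{s}$ with $s>\frac{2n}{n-1}$, the nonlinearity lies in $L^{s(n-1)/(n+1)}$, which improves integrability each step. Once $q>n$, Morrey gives $L^\infty$. Then the nonlinearity is a bounded potential times $\tilde\psi$, and linear $W^{1,p}$ estimates hold for every $p<\infty$. Constants depend on $p$, $n$ and $C_0$, and the bound is linear in $\|\tilde\psi\|_{L^{\frac{2n}{n-1}}(\Omega)}$: all nonlinear factors are bounded by powers of $\varepsilon_0$, and the starting norm is the $L^{\frac{2n}{n-1}}$ norm.

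The main obstacle is the first step. The absorption must be done carefully because the critical nonlinearity gives no gain in exponent at the start, so smallness of energy is the only mechanism. For the weak solution one should justify the absorption through a difference quotient or approximation argument, or by instead using a Morrey-space/Riesz-potential estimate with the Green function of $D_{\R^n}$.
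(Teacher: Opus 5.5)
Your proposal is correct and follows essentially the same route as the paper. Both use the rescaling $\tilde\psi(z)=r^{\frac{n-1}{2}}\psi(rz)$, with $\tilde a=r\,a(r\cdot)$ and $\tilde b=b(r\cdot)$ small in terms of $\delta_0$. Both then apply a cutoff $L^s$ estimate for $D_{\R^n}$, absorbing the critical term via H\"older with $\||\tilde\psi|^{\frac2{n-1}}\|_{L^n}$ plus Sobolev for $s<n$, and the $\tilde b\nabla\tilde\psi$ term via smallness of $\delta_0$. This is followed by a finite bootstrap on nested annuli and a final $W^{1,p}$ estimate made linear by bounding the superlinear factors with powers of $\varepsilon_0$.

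The differences are minor. The paper iterates the absorbed estimate $s\mapsto \frac{ns}{n-s}$ to reach every $L^m$, rather than passing through Morrey and $L^\infty$. The paper also does not address the a priori $W^{1,s}$-finiteness needed for the absorption step, which you correctly flag for weak solutions.
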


\begin{proof}
Set
\[
        q:=\frac{2n}{n-1}.
\]
We rescale the spinor by
\begin{equation}\label{eq:critical-scaling}
        \widetilde\psi(z)
        :=
        r^{\frac{n-1}{2}}\psi(rz),
        \qquad
        z\in B_{5/2}\setminus B_{1/2}.
\end{equation}
Since
\[
        \nabla_z\widetilde\psi(z)
        =
        r^{\frac{n+1}{2}}(\nabla_y\psi)(rz),
\]
the rescaled spinor satisfies
\begin{equation}\label{eq:scaled-Dirac}
        D_{\mathbb R^n}\widetilde\psi
        =
        |\widetilde\psi|^{\frac{2}{n-1}}\widetilde\psi
        +
        \widetilde a(z)\widetilde\psi
        +
        \widetilde b(z)\nabla\widetilde\psi
        \qquad
        \text{on } B_{5/2}\setminus B_{1/2},
\end{equation}
where
\begin{equation}\label{eq:scaled-coefficients}
        \widetilde a(z)=r\,a(rz),
        \qquad
        \widetilde b(z)=b(rz).
\end{equation}
By \eqref{eq:ab-assumption},
\begin{equation}\label{eq:small-scaled-coefficients}
        \|\widetilde a\|_{L^\infty(B_{5/2}\setminus B_{1/2})}
        +
        \|\widetilde b\|_{L^\infty(B_{5/2}\setminus B_{1/2})}
        \le
        C\delta.
\end{equation}
Furthermore, the $L^q$ norm is invariant:
\begin{equation}\label{eq:Lq-invariant}
        \|\widetilde\psi\|_{L^q(B_{5/2}\setminus B_{1/2})}
        =
        \|\psi\|_{L^q(B_{5r/2}\setminus B_{r/2})}.
\end{equation}
Hence \eqref{eq:small-annular-energy} implies
\begin{equation}\label{eq:small-rescaled-Lq}
        \|\widetilde\psi\|_{L^q(B_{5/2}\setminus B_{1/2})}
        \le
        \varepsilon_0^{1/q}.
\end{equation}
Let
\[
        B_{\rho_1^+}\setminus B_{\rho_1^-}
        \subset\!\subset
        B_{\rho_2^+}\setminus B_{\rho_2^-}
        \subset\!\subset
        B_{5/2}\setminus B_{1/2},
\]
and choose a cut-off function $\zeta\in C_c^\infty(B_{\rho_2^+}\setminus B_{\rho_2^-})$
with $\zeta\equiv1$ on $B_{\rho_1^+}\setminus B_{\rho_1^-}$.
For $1<s<n$, the local $L^s$ estimate for the Euclidean Dirac operator gives
\begin{align}\label{eq:local-dirac-estimate}
 \|\zeta\widetilde\psi\|_{W^{1,s}}
 &\le
 C_s\Big(
        \|\widetilde\psi\|_{L^s(B_{\rho_2^+}\setminus B_{\rho_2^-})}
        +
        \|\zeta |\widetilde\psi|^{\frac{2}{n-1}}\widetilde\psi\|_{L^s}
        \notag\\
 &\hspace{3.5cm}
        +
        \|\widetilde a\,\zeta\widetilde\psi\|_{L^s}
        +
        \|\widetilde b\,\zeta\nabla\widetilde\psi\|_{L^s}
        \Big).
\end{align}
Using Hölder and Sobolev inequalities,
the second term on the right-hand side of \eqref{eq:local-dirac-estimate} yields:
\begin{align}\label{eq:critical-absorption}
 \|\zeta |\widetilde\psi|^{\frac{2}{n-1}}\widetilde\psi\|_{L^s}
 &\le
 \||\widetilde\psi|^{\frac{2}{n-1}}\|_{L^n(B_{\frac52}\setminus B_{\frac12})}
 \|\zeta\widetilde\psi\|_{L^{\frac{ns}{n-s}}}
 \notag\\
 &\le
 C_s
 \|\widetilde\psi\|_{L^q(B_{\frac52}\setminus B_{\frac12})}^{\frac{2}{n-1}}
 \|\zeta\widetilde\psi\|_{W^{1,s}}.
\end{align}
 Consequently,
\begin{equation}\label{eq:fixed-annulus-W1s}
 \|\widetilde\psi\|_{W^{1,s}(B_{\rho_1^+}\setminus B_{\rho_1^-})}
 \le
 C_s
 \|\widetilde\psi\|_{L^s(B_{\rho_2^+}\setminus B_{\rho_2^-})},
 \qquad
 1<s<n.
\end{equation}
 Choose $s_0$ with
\[
        1<s_0<n,
        \qquad
        s_0\le q.
\]
When necessary, $s_0$ is chosen sufficiently close to $n$ so that the finite iteration below
reaches the exponents needed later.
By \eqref{eq:fixed-annulus-W1s} and the Sobolev embedding
$W^{1,s}\hookrightarrow L^{ns/(n-s)}$, we obtain an
improvement from $L^{s_0}$ to $L^{s_1}$, where
\[
        s_1=\frac{ns_0}{n-s_0}.
\]
Repeating this argument, for every $m>1$, we obtain
\begin{equation}\label{eq:high-integrability}
        \|\widetilde\psi\|_{L^m(B_{\frac94}\setminus B_{\frac34})}
        \le
        C_m
        \|\widetilde\psi\|_{L^q(B_{\frac52}\setminus B_{\frac12})}.
\end{equation}
The constants depend on $m,n,C_0$, but not on $r,j,R,\delta$.

Choose a cut-off function
$\chi\in C_c^\infty(B_{\frac94}\setminus B_{\frac34})$ with
$\chi\equiv1$ on $B_2\setminus B_1$. Applying the local $L^p$ estimate for the
Dirac operator to $\chi\widetilde\psi$ and using \eqref{eq:scaled-Dirac}, we get
\begin{align}\label{eq:final-Lp-estimate-start}
 \|\widetilde\psi\|_{W^{1,p}(B_2\setminus B_1)}
 &\le
 C_p\Big(
        \|\widetilde\psi\|_{L^p(B_{\frac94}\setminus B_{\frac34})}
        +
        \||\widetilde\psi|^{\frac{n+1}{n-1}}\|_{L^p(B_{\frac94}\setminus B_{\frac34})}
        \notag\\
 &\hspace{3.5cm}
        +
        \|\widetilde a\,\widetilde\psi\|_{L^p(B_{\frac94}\setminus B_{\frac34})}
        +
        \|\widetilde b\,\nabla\widetilde\psi\|_{L^p(B_{\frac94}\setminus B_{\frac34})}
        \Big).
\end{align}
As before, by taking
$\delta_0$ small if needed, we get
\begin{align}\label{eq:final-Lp-estimate}
 \|\widetilde\psi\|_{W^{1,p}(B_2\setminus B_1)}
 &\le
 C_p\Big(
        \|\widetilde\psi\|_{L^p(B_{\frac94}\setminus B_{\frac34})}
        +
        \|\widetilde\psi\|_{L^{\frac{p(n+1)}{n-1}}(B_{\frac94}\setminus B_{\frac34})}^{\frac{n+1}{n-1}}
        \Big).
\end{align}
Using \eqref{eq:high-integrability},
we obtain
\begin{equation}\label{eq:fixed-scale-final}
        \|\widetilde\psi\|_{W^{1,p}(B_2\setminus B_1)}
        \le
        C_p
        \|\widetilde\psi\|_{L^q(B_{\frac52}\setminus B_{\frac12})}.
\end{equation}
 This completes the proof.
\end{proof}
\begin{lemma}\label{lem:linear-eps}
Let $p>1$ and let $\psi:B_1\to {\mathbb S(\R^n)}$ be a solution of
\[
        D_{\R^n}\psi=|\phi|^{\frac2{n-1}}\psi+a(x)\psi+b(x)\nabla\psi,
\]
where $a(x)$ and $b(x)$ are smooth functions satisfying $a(x)=O(|x|)$ and
$b(x)=O(|x|)$, and $\|\phi\|_{L^{\frac{2n}{n-1}}}+\|\phi\|_{L^\infty}\le C$. Then
\[
        \|\psi\|_{W^{1,2}(B_{1/2})}\le C\|\psi\|_{L^2(B_1)},
\]
where the constant $C>0$ depends only on $n,p$, and the bound for
$\|\phi\|_{L^{\frac{2n}{n-1}}}+\|\phi\|_{L^\infty}$.
\end{lemma}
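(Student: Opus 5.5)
The plan is a standard interior elliptic estimate for a linear Dirac equation with bounded potential and a small first-order perturbation. I would work with nested balls $B_{1/2}\subset B_{\rho}\subset B_{\rho'}\subset B_1$ and a cut-off $\zeta\in C_c^\infty(B_{\rho'})$ with $\zeta\equiv1$ on $B_\rho$ and $|\nabla\zeta|\le C$. The product rule gives
\[
D_{\R^n}(\zeta\psi)=\nabla\zeta\cdot\psi+\zeta|\phi|^{\frac2{n-1}}\psi+a\,\zeta\psi+b\,\zeta\nabla\psi .
\]
On compactly supported spinors in $\R^n$ we have the identity $\|\nabla(\zeta\psi)\|_{L^2}=\|D_{\R^n}(\zeta\psi)\|_{L^2}$, from $D^2=-\Delta$ and integration by parts. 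In the present setting I would use it in place of the Calderón--Zygmund $L^s$ estimate invoked in Lemma \ref{lem:epsilon-reg-fixed}, since the $L^2$ case is simpler.

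The terms are bounded one at a time. Since $\|\phi\|_{L^\infty}\le C$, the potential term satisfies $\|\zeta|\phi|^{\frac2{n-1}}\psi\|_{L^2}\le C\|\psi\|_{L^2(B_1)}$. Here, unlike Lemma \ref{lem:epsilon-reg-fixed}, no smallness of the $L^{\frac{2n}{n-1}}$ norm is needed, because the equation is linear in $\psi$ and the $L^\infty$ bound on $\phi$ suffices. The terms $\nabla\zeta\cdot\psi$ and $a\zeta\psi$ are bounded by $C\|\psi\|_{L^2(B_1)}$. For the derivative term I would write
\[
b\,\zeta\nabla\psi=b\,\nabla(\zeta\psi)-b\,(\nabla\zeta)\psi .
\]
This gives $\|b\zeta\nabla\psi\|_{L^2}\le \sup_{B_{\rho'}}|b|\,\|\nabla(\zeta\psi)\|_{L^2}+C\|\psi\|_{L^2}$.

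The main obstacle is absorbing this last term. We only know $|b(x)|\le C_0|x|$, which is small near the origin but not necessarily on all of $B_1$. I would handle it by a scaling and covering argument. First I would prove the estimate on a small ball $B_{\sigma}$ with $C_0\sigma\le\frac12$, where absorption is immediate. This yields
\[
\|\psi\|_{W^{1,2}(B_{\sigma/2})}\le C\|\psi\|_{L^2(B_\sigma)}.
\]
Away from the origin, $b$ is merely bounded, so I would fall back on the implicit standing assumption (as in Lemma \ref{lem:epsilon-reg-fixed}) that the domain radius is at most $\delta_0$ with $C_0\delta_0$ small. Equivalently, the lemma is applied after rescaling $x\mapsto\delta_0 x$: under this rescaling $\tilde b(x)=b(\delta_0x)$ has $\sup|\tilde b|\le C_0\delta_0\le\frac12$, while $\tilde a=\delta_0 a(\delta_0\cdot)$ and the rescaled potential remain bounded. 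With $\sup_{B_1}|b|\le\frac12$ the absorption works directly on $B_1$, giving
\[
\|\nabla(\zeta\psi)\|_{L^2}\le C\|\psi\|_{L^2(B_1)} .
\]
Since $\zeta\equiv1$ on $B_{1/2}$, this implies $\|\psi\|_{W^{1,2}(B_{1/2})}\le C\|\psi\|_{L^2(B_1)}$, with $C$ depending only on $n$, $C_0$ and the bound on $\phi$.

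There is a technical point in applying the identity $\|\nabla u\|_{L^2}=\|D_{\R^n}u\|_{L^2}$ to $u=\zeta\psi$, which requires $\psi\in W^{1,2}_{loc}$. If $\psi$ is only a weak solution, I would first obtain this regularity by difference quotients or mollification, which works because $|b|\le\frac12$ keeps the operator $D-b\nabla$ uniformly elliptic. Alternatively, one can assume smoothness, as the paper tacitly does.
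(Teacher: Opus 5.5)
Your proposal is correct and is essentially the paper's proof. The paper likewise reduces, by shrinking or rescaling the ball, to $\|b\|_{L^\infty(B_1)}\le\beta$ small. It bounds $V=|\phi|^{\frac{2}{n-1}}$ in $L^\infty$, applies the $L^2$ elliptic estimate to $\eta\psi$ with a cut-off $\eta\equiv1$ on $B_{1/2}$, and absorbs the $b$-term via $\eta\nabla\psi=\nabla(\eta\psi)-(\nabla\eta)\psi$.

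Your exact identity $\|\nabla(\zeta\psi)\|_{L^2}=\|D_{\R^n}(\zeta\psi)\|_{L^2}$ is just the sharp flat-space form of that ellipticity estimate. Your observation that smallness of $b$ on the whole ball is an implicit standing assumption, not a consequence of $b=O(|x|)$ alone, applies equally to the paper's argument.
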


\begin{proof}
Let
\[
        V(x):=|\phi(x)|^{\frac2{n-1}} .
\]
By assumption, $\|V\|_{L^\infty(B_1)}$ is bounded.
Since $a(x)=O(|x|)$ and $b(x)=O(|x|)$, after shrinking the coordinate ball, or equivalently after applying the estimate on a sufficiently small rescaled ball, we may assume
\[
        \|b\|_{L^\infty(B_1)}\le \beta,
\]
where $\beta>0$ will be chosen below.  The coefficient $a$ satisfies
\[
        \|a\|_{L^\infty(B_1)}\le C.
\]
Choose $\eta\in C_c^\infty(B_{3/4})$ such that $\eta\equiv1$ on $B_{1/2}$, $0\le\eta\le1$, and $|\nabla\eta|\le C$.  Then
\[
        D_{\R^n}(\eta\psi)
        =\eta V\psi+\eta a\psi+\eta b\nabla\psi+\nabla\eta\cdot\psi .
\]
The ellipticity of the Dirac operator gives
\begin{align*}
\|\eta\psi\|_{W^{1,2}(\R^n)}
&\le C\Big(\|\eta\psi\|_{L^2}
        +\|\eta V\psi\|_{L^2}
        +\|\eta a\psi\|_{L^2}
        +\|\eta b\nabla\psi\|_{L^2}
        +\|\nabla\eta\cdot\psi\|_{L^2}\Big)  \\
&\le C\Big(1+\|V\|_{L^\infty(B_1)}+\|a\|_{L^\infty(B_1)}+\|b\|_{L^\infty(B_1)}\Big)
        \|\psi\|_{L^2(B_1)}
        +C\|b\|_{L^\infty(B_1)}\|\eta\nabla\psi\|_{L^2(B_1)} .
\end{align*}
Observe that
\[
        \eta\nabla\psi=\nabla(\eta\psi)-(\nabla\eta)\psi,
\]
so
\[
        \|\eta b\nabla\psi\|_{L^2}
        \le \|b\|_{L^\infty(B_1)}\|\nabla(\eta\psi)\|_{L^2}
        +C\|b\|_{L^\infty(B_1)}\|\psi\|_{L^2(B_1)} .
\]
Choosing $\beta>0$ so small that $C\beta\le 1/2$, the term involving
$\|\nabla(\eta\psi)\|_{L^2}$ is absorbed into the left-hand side.  Hence
\[
        \|\eta\psi\|_{W^{1,2}(\R^n)}
        \le C\|\psi\|_{L^2(B_1)}.
\]
Since $\eta\equiv1$ on $B_{1/2}$, this gives
\[
        \|\psi\|_{W^{1,2}(B_{1/2})}
        \le C\|\psi\|_{L^2(B_1)}.
\]
 This proves the lemma.
\end{proof}

\vskip1cm

\section{Energy quantization on fixed Spin manifolds}\label{section3}

\vskip0.5cm

 First, we prove a three circle theorem for Dirac operators. We then apply it to the
compactness analysis of the Dirac equation.

\subsection{Three circles theorem for Dirac operator}
At the beginning, we recall the eigenvalues
of the Dirac operator on the sphere given below. For the proof and details, see \cite{Bar}.

\begin{lemma}\label{lem:sphere-spectrum}
The Dirac operator $D_{S^{n-1}}$ on the sphere $S^{n-1}$ has the eigenvalues
\[
        \pm\left(\frac{n-1}{2}+k\right),\qquad k\ge0,
\]
with multiplicities
\[
        2^{\lfloor\frac{n-1}{2}\rfloor}\binom{k+n-2}{k}.
\]
For $\lambda\in {\Spec^+(D_{S^{n-1}}):=\{\frac{n-1}{2}+k:k\in\N\}}$, let
$\phi_{\lambda,1},\ldots,\phi_{\lambda,d(\lambda)}$ be eigen-spinors to eigenvalue $\lambda$.
Then the positive and negative eigenspinors form a complete orthonormal basis of
$L^2(S^{n-1},{\mathbb S(S^{n-1})})$.
\end{lemma}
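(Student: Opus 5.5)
The spectrum of $D_{S^{n-1}}$ on the round sphere can be computed in two ways. The first reduces the problem to harmonic analysis on $\R^n$ through the cone construction and the conformal covariance \eqref{eq:conformal-cylinder}. The second uses a Lichnerowicz-type argument on the sphere, combined with Killing spinors. I would take the first route, since it matches the cylinder framework already set up in Section \ref{section2}.

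\textbf{Step 1: the Dirac operator in polar form.} On $\R^n\setminus\{0\}=(0,\infty)\times S^{n-1}$ with $g_{\R^n}=dr^2+r^2d\theta^2$, the product formula for $D_N$ (with $M_1=\R$) together with the conformal change \eqref{eq:conformal-dirac-general} gives
\[
D_{\R^n}=\partial_r\cdot\Big(\partial_r+\frac{n-1}{2r}-\frac1r\widetilde D_{S^{n-1}}\Big),
\]
where $\widetilde D_{S^{n-1}}$ is conjugate to $D_{S^{n-1}}$, or to $D_{S^{n-1}}\oplus(-D_{S^{n-1}})$ in the odd case.

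\textbf{Step 2: eigenspinors from harmonic spinors.} A spinor on $\R^n$ is harmonic for $D_{\R^n}$ exactly when its components are suitable combinations of harmonic polynomials, because $D^2=-\Delta$ acts componentwise on the trivial bundle. Consider homogeneous spinors $\psi=r^k\phi(\theta)$ with $D_{\R^n}\psi=0$. By Step 1, these correspond to eigenspinors of $\widetilde D_{S^{n-1}}$ with eigenvalue $k+\frac{n-1}{2}$. Kelvin inversion $\psi\mapsto x\cdot |x|^{-n}\psi(x/|x|^2)$ sends them to homogeneous harmonic spinors of degree $-(k+n-1)$, and these give the eigenvalue $-(k+\frac{n-1}{2})$.

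\textbf{Step 3: counting.} Denote by $\mathcal M_k$ the space of degree-$k$ monogenic (Dirac-harmonic) $\mathbb S_n$-valued polynomials. The Fischer decomposition
\[
\mathcal P_k\otimes\mathbb S_n=\mathcal M_k\oplus x\cdot(\mathcal P_{k-1}\otimes\mathbb S_n)
\]
gives
\[
\dim\mathcal M_k=\dim\mathbb S_n\Big[\binom{k+n-1}{n-1}-\binom{k+n-2}{n-1}\Big]=\dim\mathbb S_n\binom{k+n-2}{k}.
\]
One must then pass from $\mathbb S_n$ to $\mathbb S(S^{n-1})$, of rank $2^{\lfloor (n-1)/2\rfloor}$. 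When $n$ is even, $\mathbb S_n$ restricted to the sphere is two copies of $\mathbb S(S^{n-1})$, and these interchange the sign of the eigenvalue. When $n$ is odd, the restriction is one copy. This yields the multiplicity $2^{\lfloor\frac{n-1}{2}\rfloor}\binom{k+n-2}{k}$ for each sign.

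\textbf{Step 4: completeness.} $D_{S^{n-1}}$ is a self-adjoint elliptic operator on a compact manifold, so its eigenspinors form an orthonormal basis of $L^2$. To see that no eigenvalues are missing, take any eigenspinor $\phi$ with eigenvalue $\mu$. Then $r^{\mu-\frac{n-1}{2}}\phi$ is harmonic on $\R^n\setminus\{0\}$. Since the only homogeneous harmonic functions of a given degree are polynomials or their Kelvin transforms, $\mu-\frac{n-1}{2}$ must be an integer $\ge0$ or $\le-(n-1)$. This matches the list above.

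The main obstacle is the bookkeeping of spinor bundle identifications in Steps 1 and 3. This means handling the even/odd dimensional restriction of $\mathbb S_n$ to $S^{n-1}$ and the sign conventions in the Clifford multiplication \eqref{eq:prod-cliff}. Everything else is standard, and one may alternatively simply cite \cite{Bar} for it.
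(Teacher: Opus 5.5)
The paper does not prove this lemma; it just quotes \cite{Bar}. Your route (restrict homogeneous monogenic polynomials on $\R^n$ to $S^{n-1}$, count them by the Fischer decomposition, and get the negative eigenvalues by Kelvin inversion) is the classical cone argument and would make the lemma self-contained. However, two steps do not yet deliver what you claim.

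\textbf{Even $n$: you only get the sum of the two multiplicities.} For $n$ even, $\widetilde D_{S^{n-1}}=D_{S^{n-1}}\oplus(-D_{S^{n-1}})$. So restriction identifies $\mathcal M_k$ with $E_\mu(D_{S^{n-1}})\oplus E_{-\mu}(D_{S^{n-1}})$, where $\mu=\frac{n-1}{2}+k$ and $E_\mu$ is the $\mu$-eigenspace. This gives only $\dim E_\mu+\dim E_{-\mu}=2^{n/2}\binom{k+n-2}{k}$, not the value for each sign.

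Kelvin inversion (on the sphere, Clifford multiplication by the normal $\nu$) adds nothing here, because the spectrum of $D\oplus(-D)$ is symmetric for trivial reasons. What you need is the symmetry of the spectrum of $D_{S^{n-1}}$ itself for odd $n-1$, and that is a separate input. One way to supply it:
\begin{enumerate}[label=(\roman*)]
\item Split $\mathcal M_k=\mathcal M_k^+\oplus\mathcal M_k^-$ by the chirality of $\mathbb S_n$. This is legitimate because $D_{\R^n}$ exchanges the half-spinor bundles.
\item Observe that each half-spinor bundle restricts to a single copy of $\mathbb S(S^{n-1})$, on which $\widetilde D$ acts as $D_{S^{n-1}}$, respectively $-D_{S^{n-1}}$.
\item Prove $\dim\mathcal M_k^+=\dim\mathcal M_k^-$. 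You can do this either from the chiral Fischer decomposition $\mathcal P_k\otimes\mathbb S_n^\pm=\mathcal M_k^\pm\oplus x\cdot(\mathcal P_{k-1}\otimes\mathbb S_n^\mp)$, or from the map $\psi\mapsto e_1\cdot(\psi\circ\rho)$, with $\rho$ the reflection in $e_1^\perp$. That map satisfies $D_{\R^n}(e_1\cdot\psi\circ\rho)=-e_1\cdot(D_{\R^n}\psi)\circ\rho$, so it preserves $\mathcal M_k$ and exchanges the chiralities.
\end{enumerate}

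\textbf{Step 4: the harmonic-function fact is weaker than you state.} The Kelvin transform for $\Delta$ shifts degree by $2-n$, not $1-n$. So homogeneous harmonic functions on $\R^n\setminus\{0\}$ have degree in $\Z_{\ge0}\cup\Z_{\le-(n-2)}$, not in $\Z_{\ge0}\cup\Z_{\le-(n-1)}$. The degree $s=-(n-2)$ corresponds to $\mu=-\frac{n-3}{2}$; for $n=3$ this would be a harmonic spinor on $S^2$. Your argument does not exclude it, and the monogenic Kelvin transform does not help, since it sends this degree to $-1$, which is not a polynomial degree.

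This case has to be excluded with the Dirac equation itself. The only harmonic functions homogeneous of degree $2-n$ are multiples of $|x|^{2-n}$, so $\psi=|x|^{2-n}c$ with $c\in\mathbb S_n$ constant. Then $D_{\R^n}\psi=(2-n)|x|^{-n}x\cdot c$, which is nonzero for $n\ge3$ and $c\neq0$.

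With these two repairs, plus the orientation and sign bookkeeping you already flagged, your argument proves the lemma.
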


\begin{lemma}\label{lem:harmonic-decomp}
Let $\phi$ be a harmonic spinor on $\Sigma$, then $\phi$ can be decomposed as
\[
        \phi=\sum_{\lambda\in\Spec(D_{S^{n-1}})}\sum_{j=1}^{d(\lambda)}
        \phi_{\lambda,j}\otimes f_{\lambda,j}
\]
provided $n-1$ is even. For $n-1$ is odd, $\phi$ can be decomposed as
\[
        \phi=\sum_{\lambda\in\Spec(D_{S^{n-1}})}\sum_{j=1}^{d(\lambda)}
        \phi_{\lambda,j}\otimes\binom{f_{\lambda,j}}{g_{\lambda,j}},
\]
where
\[
        f_{\lambda,j}(t)=A_{\lambda,j}e^{\lambda t}+B_{\lambda,j}e^{-\lambda t},
        \qquad
        g_{\lambda,j}(t)=C_{\lambda,j}e^{\lambda t}+D_{\lambda,j}e^{-\lambda t}.
\]
\end{lemma}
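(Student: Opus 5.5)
We plan to separate variables on the cylinder $\Sigma=I\times S^{n-1}$ with the product Dirac operator of Section~2.2, taking $M_1=I\subset\R$ (so $m_1=1$) and $M_2=S^{n-1}$. When $n-1$ is even, $m_1=1$ is odd and $m_2$ is even; after swapping the roles of the factors (the sphere playing the even-dimensional factor), we have $\mathbb S(\Sigma)=\mathbb S(S^{n-1})\otimes\mathbb S(I)$ with $\mathbb S(I)$ a trivial line bundle. When $n-1$ is odd, both dimensions are odd and $\mathbb S(\Sigma)=(\mathbb S(I)\oplus\mathbb S(I))\otimes\mathbb S(S^{n-1})$, which explains the two-component coefficients $\binom{f}{g}$. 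In either case the formula for $D_N$ gives
\[
D_\Sigma=\gamma\,\partial_t+\omega\, D_{S^{n-1}},
\]
with $\gamma,\omega$ constant endomorphisms that square to $\pm1$ and anticommute. We would check the anticommutation directly from the Clifford relation (i) applied to $\partial_t$ and the sphere directions.

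Next we expand a harmonic spinor $\phi(t,\cdot)$ for each fixed $t$ in the $L^2$ orthonormal eigenbasis $\{\phi_{\lambda,j}\}$ of Lemma~\ref{lem:sphere-spectrum}, with $\lambda$ running over all of $\Spec(D_{S^{n-1}})$, both signs. This gives $\phi=\sum\phi_{\lambda,j}\otimes c_{\lambda,j}(t)$, where the $c_{\lambda,j}$ are smooth because $\phi$ is smooth by elliptic regularity. Because $D_{S^{n-1}}$ is self-adjoint and $\gamma,\omega$ act only on the interval factor, pairing $D_\Sigma\phi=0$ with $\phi_{\lambda,j}$ in $L^2(S^{n-1})$ yields a decoupled ODE system for each mode,
\[
\gamma c'_{\lambda,j}+\lambda\,\omega c_{\lambda,j}=0 .
\]
In the even case this is scalar, $c'=\mp\lambda c$ up to the convention for $\gamma^{-1}\omega$. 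Combining each pair $\pm\lambda$, the solutions have the form $e^{\lambda t}$ or $e^{-\lambda t}$, so one obtains $f_{\lambda,j}=A e^{\lambda t}+Be^{-\lambda t}$, with one of $A,B$ vanishing for each fixed sign of the eigenvalue. In the odd case $\gamma^{-1}\omega$ is a $2\times2$ matrix with square $1$ and eigenvalues $\pm1$. Diagonalizing it gives the components $f,g$, each a combination of $e^{\pm\lambda t}$.

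The main obstacle is bookkeeping rather than analysis. One must track the sign conventions (the factor $\omega_C$ and the $\oplus(-D)$ in $\widehat D$) precisely enough to confirm that the coefficient matrix has eigenvalues $\pm\lambda$, so that the exponents are exactly $\pm\lambda$. One must also justify exchanging the series with $\partial_t$. For the latter we would use that $\phi\in C^\infty$ on compact slabs, so termwise differentiation of the Fourier coefficients $c_{\lambda,j}(t)=\langle\phi(t),\phi_{\lambda,j}\rangle_{L^2(S^{n-1})}$ is legitimate, and that the expansion converges in $L^2(S^{n-1})$ for each $t$ by completeness.
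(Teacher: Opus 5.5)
\paragraph{The case $n-1$ odd.} Your treatment here is correct. There $\mathbb S(\Sigma)=(\mathbb S(I)\oplus\mathbb S(I))\otimes\mathbb S(S^{n-1})$. The endomorphisms coming from $\widehat D_I$ and $\omega_C^I$ act on the $\mathbb C^2$ interval factor and commute with $D_{S^{n-1}}$. So each eigenmode gives a decoupled $2\times 2$ system whose matrix $\gamma^{-1}\omega$ squares to $1$, and the exponents are $\pm\lambda$.

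\paragraph{The case $n-1$ even: the gap.} Here $\mathbb S(I)$ is a line bundle, so no two anticommuting nonzero constant endomorphisms can act on the interval factor (two scalars cannot anticommute). Your premise that $\gamma,\omega$ ``act only on the interval factor'' therefore fails. The product formula of Section~2.2 with $M_1=S^{n-1}$ actually gives
\[
D_\Sigma=D_{S^{n-1}}\otimes\Id+\bigl(\omega_C^{S^{n-1}}\cdot\bigr)\otimes i\frac{d}{dt},
\]
so Clifford multiplication by $\partial_t$ acts through the chirality operator of the sphere. Since $\omega_C^{S^{n-1}}$ anticommutes with $D_{S^{n-1}}$, it maps the $\lambda$-eigenspace onto the $(-\lambda)$-eigenspace. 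Pairing $D_\Sigma\phi=0$ with $\phi_{\lambda,j}$ therefore picks up the coefficient of a $(-\lambda)$-mode, and there is no scalar ODE $c'=\mp\lambda c$.

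Your intermediate conclusion (one of $A,B$ vanishes for each sign of $\lambda$) is in fact false in the $D_{S^{n-1}}$-eigenbasis. Choose $\phi_{-\lambda,j}=\omega_C^{S^{n-1}}\phi_{\lambda,j}$ and write $\phi=\sum_{\lambda>0}\sum_j(\phi_{\lambda,j}f_{\lambda,j}+\phi_{-\lambda,j}g_{\lambda,j})$. The equation then becomes the coupled system
\[
\lambda f_{\lambda,j}+i\, g_{\lambda,j}'=0,\qquad -\lambda g_{\lambda,j}+i\, f_{\lambda,j}'=0 .
\]
Its general solution is $f=Ae^{\lambda t}+Be^{-\lambda t}$, $g=i(Ae^{\lambda t}-Be^{-\lambda t})$, with both $A$ and $B$ free. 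So the coefficient of a single eigenspinor is generally not a single exponential.

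\paragraph{What is missing.} The missing idea is this coupling of the $\pm\lambda$ modes through the chirality operator, which is exactly how the paper argues. It then obtains $f''=\lambda^2 f$ and $g''=\lambda^2 g$.

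Alternatively, you could make your decoupling legitimate with a different identification. Give each slice the Clifford multiplication $X\cdot_{S}=\partial_t\cdot X\cdot_\Sigma$, so that
\[
D_\Sigma=\partial_t\cdot\bigl(\nabla_{\partial_t}-\widetilde D\bigr),\qquad \widetilde D=\partial_t\cdot\sum_a e_a\cdot\nabla_{e_a},
\]
where $\widetilde D$ is the sphere's Dirac operator for that structure. The eigenmodes of $\widetilde D$ then really do evolve by single exponentials, which implies the lemma's weaker form. But this is not the product structure you invoked, so you would have to state and verify that identification explicitly.
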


\begin{proof}
When $n-1$ is even,
 by  separation of variables, $\phi$ can be
  decomposed as \[\phi=\sum_{\substack{\lambda\in Spec(D_{S^{n-1}})\\ \lambda >0}}\sum_{j=1}^{d(\lambda)}\left[\phi_{\lambda,j}\otimes f_{\lambda,j}+\phi_{-\lambda,j}\otimes g_{\lambda,j}\right]. \]
 Then, using the fact that the chirality operator $\omega_C^{S^{n-1}}$ interchanges the $\lambda$ and $-\lambda$ eigenspaces of $D_{S^{n-1}}$,
 \[
 \omega_C^{S^{n-1}} \cdot \phi_{\pm\lambda,j}= \phi_{\mp\lambda,j},
 \]
 we obtain
  \begin{equation*}
      \begin{split}
         & 0= D_\Sigma\phi \\
        &  =
          \sum_{\substack{\lambda\in Spec(D_{S^{n-1}})\\ \lambda >0}}\sum_{j=1}^{d(\lambda)} \left[
  {(D_{S^{n-1}} \phi_{\lambda,j}) \otimes f_{\lambda,j}}
  + {(D_{S^{n-1}} \phi_{-\lambda,j}) \otimes g_{\lambda,j}}\right]
\\
&\quad +   \sum_{\substack{\lambda\in Spec(D_{S^{n-1}})\\ \lambda >0}}\sum_{j=1}^{d(\lambda)} \left[{(\omega_{\mathbb{C}}^{S^{n-1}} \cdot \phi_{\lambda,j}) \otimes \left(i \frac{df_{\lambda,j}}{dt}\right)}
  + {(\omega_{\mathbb{C}}^{S^{n-1}} \cdot \phi_{-\lambda,j}) \otimes \left(i \frac{dg_{\lambda,j}}{dt}\right)}
\right]\\
&=  \sum_{\substack{\lambda\in Spec(D_{S^{n-1}})\\ \lambda >0}}\sum_{j=1}^{d(\lambda)}
\left[
\phi_{\lambda,j}\otimes \left(\lambda f_{\lambda,j}+ i
\frac{dg_{\lambda,j}}{dt}\right)\right]\\
&\quad +
 \sum_{\substack{\lambda\in Spec(D_{S^{n-1}})\\ \lambda >0}}\sum_{j=1}^{d(\lambda)}
\left[
\phi_{-\lambda,j}\otimes \left(-\lambda g_{\lambda,j}+ i
\frac{df_{\lambda,j}}{dt}\right)\right],
      \end{split}
  \end{equation*}
 which yields $f_{\lambda,j}, g_{\lambda,j}: \mathbb{R} \to \mathbb{C}$ satisfy the coupled ODEs:
\[
\begin{cases}
   \lambda f_{\lambda,j}+ i
\frac{dg_{\lambda,j}}{dt}=0 \\[10pt]
   -\lambda g_{\lambda,j}+ i
\frac{df_{\lambda,j}}{dt}=0.
\end{cases}
\]
Then,
\[
f_{\lambda,j}(t) = A_{\lambda,j} e^{\lambda t} + B_{\lambda,j} e^{-\lambda t}, \quad
g_{\lambda,j}(t) =  \left( C_{\lambda,j} e^{\lambda t} + D_{\lambda,j} e^{-\lambda t} \right).
\]
Similarly, we obtain the decomposition for harmonic spinor provided $n-1$ is odd.
\end{proof}

Set $l_0L=-\ln\delta$, $l_iL=-\ln(\lambda_kR)$ and
$\Sigma_l=[(l-1)L,lL]\times S^{n-1}$ for $l=l_0,\ldots,l_i$.

\begin{lemma}\label{lem:three-circle-harmonic}
    Let $n\geq 3$ and $\phi$ be a harmonic spinor on $\Sigma$.
    Then there exists $L_0>0$, such that for $L>L_0$,
    there holds
    \[
{\|\phi\|_{L^2(\Sigma_l)}^2 < \frac{1}{2} \left( e^{-L} \|\phi\|_{L^2(\Sigma_{l+1})}^2 + e^{-L} \|\phi\|_{L^2(\Sigma_{l-1})}^2 \right)}
\]
\end{lemma}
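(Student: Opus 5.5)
The plan is to reduce the three-circle inequality to a scalar estimate for each separated mode of Lemma~\ref{lem:harmonic-decomp}, and then sum over modes. We assume $\phi\not\equiv0$ on the relevant cylinders, since otherwise the strict inequality degenerates to $0<0$. With respect to $g_{\cyl}=dt^2+d\theta^2$, the eigenspinors $\phi_{\lambda,j}$ (and $\phi_{-\lambda,j}$) form an orthonormal basis of $L^2(S^{n-1},\mathbb S(S^{n-1}))$ by Lemma~\ref{lem:sphere-spectrum}. Fubini on $\Sigma_l=[(l-1)L,lL]\times S^{n-1}$ therefore gives
\[
\|\phi\|^2_{L^2(\Sigma_l)}=\sum_{\lambda,j}\int_{(l-1)L}^{lL}\big(|f_{\lambda,j}|^2+|g_{\lambda,j}|^2\big)\,dt ,
\]
and the same for $\Sigma_{l\pm1}$. (In the case $n-1$ even, the coupled ODEs in the proof of Lemma~\ref{lem:harmonic-decomp} show that $g_{\lambda,j}$ is also a combination of $e^{\pm\lambda t}$.) It therefore suffices to prove, for a single scalar function $f(t)=Ae^{\lambda t}+Be^{-\lambda t}$ with $\lambda\ge\frac{n-1}{2}$, that $a_l:=\int_{(l-1)L}^{lL}|f|^2\,dt$ satisfies
\[
a_l\le \tfrac12 e^{-L}\,(a_{l+1}+a_{l-1}),
\]
with strict inequality unless $f\equiv0$, and with $L_0$ independent of $\lambda$ and $j$. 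Summing over all modes then yields the lemma.

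For the scalar estimate, expand
\[
|f|^2=|A|^2e^{2\lambda t}+|B|^2e^{-2\lambda t}+2\,\mathrm{Re}(A\bar B).
\]
Set $X_l=|A|^2\int_{(l-1)L}^{lL}e^{2\lambda t}\,dt$ and $Y_l=|B|^2\int_{(l-1)L}^{lL}e^{-2\lambda t}\,dt$. These satisfy the exact relations $X_{l+1}=e^{2\lambda L}X_l$ and $Y_{l-1}=e^{2\lambda L}Y_l$. The main obstacle is the constant cross term $2L\,\mathrm{Re}(A\bar B)$, which by itself does not grow with $l$ and so cannot satisfy a three-circle inequality. I would control it by comparing with $X_lY_l$. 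A direct computation gives
\[
X_lY_l=|A|^2|B|^2\Big(\frac{\sinh(\lambda L)}{\lambda}\Big)^2,
\]
independently of $l$. Hence
\[
2L|A||B|=\frac{2\lambda L}{\sinh(\lambda L)}\sqrt{X_lY_l}\le \eta(L)\,(X_l+Y_l),
\qquad \eta(L):=\frac{\lambda L}{\sinh(\lambda L)}\le \frac{L}{\sinh L}\to0 ,
\]
using $\lambda\ge1$. This gives $(1-\eta)(X_l+Y_l)\le a_l\le(1+\eta)(X_l+Y_l)$ for every $l$, and the same two-sided bound on $\Sigma_{l\pm1}$.

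Combining these bounds with the exact growth relations,
\[
\tfrac12 e^{-L}(a_{l+1}+a_{l-1})\ \ge\ \tfrac12 e^{-L}(1-\eta)\big(X_{l+1}+Y_{l-1}\big)=\tfrac12(1-\eta)\,e^{(2\lambda-1)L}(X_l+Y_l).
\]
Here the hypothesis $n\ge3$ enters: it gives $2\lambda-1\ge n-2\ge1$, so the right-hand side is at least $\tfrac12(1-\eta)e^{L}(X_l+Y_l)$. Choose $L_0$ so that $\tfrac12(1-\eta(L))e^{L}>1+\eta(L)$ for all $L>L_0$. Then the right-hand side strictly exceeds $(1+\eta)(X_l+Y_l)\ge a_l$ whenever $X_l+Y_l>0$, that is, whenever $f\not\equiv0$. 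Because $\eta(L)\le L/\sinh L$ and $2\lambda-1\ge1$ for every mode, this $L_0$ is uniform in $(\lambda,j)$. Summing the per-mode inequalities, with strict inequality for at least one nonzero mode, gives the claimed estimate. The case $n-1$ odd is identical, with the pair $(f_{\lambda,j},g_{\lambda,j})$ treated componentwise.
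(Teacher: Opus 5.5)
Your proof is correct and follows essentially the paper's route. Both arguments decompose into the modes of Lemma~\ref{lem:harmonic-decomp}, reduce to $f=Ae^{\lambda t}+Be^{-\lambda t}$ with $\lambda\ge1$, and absorb the constant cross term $2L\,\mathrm{Re}(A\bar B)$ by AM--GM against the exponential parts, whose product does not depend on $l$. Your two-sided bound $(1\pm\eta)(X_l+Y_l)$ in place of the paper's exact computation of $K^{(l)}_{\lambda,j}$ is only a repackaging. It also handles points the paper glosses over: the missing conjugate in the cross term, strictness when $AB=0$, the case $\phi\equiv0$, and uniformity of $L_0$ in $\lambda$.
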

\begin{proof}
We prove the case when $n-1$ is even (the odd case is analogous). By Lemma \ref{lem:harmonic-decomp}, $\phi$ decomposes as
\[
\phi = \sum_{\lambda \in \operatorname{Spec}(D_{S^{n-1}})} \sum_{j=1}^{d(\lambda)} \phi_{\lambda,j} \otimes f_{\lambda,j}, \quad f_{\lambda,j}(t) = A_{\lambda,j} e^{\lambda t} + B_{\lambda,j} e^{-\lambda t},
\]
where $|\lambda| \geq \frac{n-1}{2} > 0$ for all $\lambda \in \operatorname{Spec}(D_{S^{n-1}})$. By orthonormality of $\{\phi_{\lambda,j}\}$ in $L^2\left(S^{n-1}, \mathbb{S}(S^{n-1}) \right)$:
\[
\|\phi\|_{L^2(\Sigma_l)}^2 = \sum_{\lambda,j} \int_{(l-1)L}^{lL} |f_{\lambda,j}(t)|^2  dt = \sum_{\lambda,j} I_{\lambda,j}^{(l)},
\]
where
\begin{equation*}
\begin{split}
    I_{\lambda,j}^{(l)}  &= \int_{(l-1)L}^{lL} |A_{\lambda,j} e^{\lambda t} + B_{\lambda,j} e^{-\lambda t}|^2  dt\\
    &= \frac{|A_{\lambda,j}|^2 }{2\lambda} \left( e^{2\lambda l L} - e^{2\lambda (l-1)L} \right)
+ \frac{|B_{\lambda,j}|^2 }{2\lambda} \left( e^{-2\lambda (l-1)L} - e^{-2\lambda l L} \right)
+ 2\Re (A_{\lambda,j}B_{\lambda,j})L.
\end{split}
\end{equation*}
Set
\[
D := \frac{1}{2} \left( e^{-L} \|\phi\|_{L^2(\Sigma_{l+1})}^2 + e^{-L} \|\phi\|_{L^2(\Sigma_{l-1})}^2 \right) - \|\phi\|_{L^2(\Sigma_l)}^2 = \sum_{\lambda,j} K_{\lambda,j}^{(l)},
\]
where
\[
K_{\lambda,j}^{(l)}  = \frac{1}{2} e^{-L} \left( I_{\lambda,j}^{(l+1)}  + I_{\lambda,j}^{(l-1)}  \right) - I_{\lambda,j}^{(l)} .
\]
Denote $\alpha_\lambda = e^{2\lambda L} > 1$, we obtain:
\begin{align*}
K_{\lambda,j}^{(l)}
&= \frac{\alpha_\lambda - 1}{4\lambda} e^{-L} \left( |A_{\lambda,j}|^2  e^{2\lambda(l-1)L} Q_\lambda + |B_{\lambda,j}|^2  e^{-2\lambda l L} Q_\lambda \right) \\
&\quad + 2\Re(A_{\lambda,j}B_{\lambda,j})L(e^{-L} - 1),
\end{align*}
where $Q_\lambda = e^{2\lambda L} + e^{-2\lambda L} - 2e^L$.
 $|A_{\lambda,j}|^2  e^{2\lambda(l-1)L} + |B_{\lambda,j}|^2  e^{-2\lambda l L} \geq 2|A_{\lambda,j}B_{\lambda,j}|e^{-\lambda L}$ and $\alpha_\lambda-1\geq 2\lambda (e^L-1)$ gives
\[
K_{\lambda,j}^{(l)}  \geq |A_{\lambda,j}B_{\lambda,j}|(1-e^{-L}) \left[ Q_\lambda e^{-\lambda L}-2L
 \right]>0
\]
provided $L$ is large enough.

\end{proof}
\begin{remark}
   \(n\geq 3\) in Lemma \ref{lem:harmonic-decomp} is used in an essential way.
Indeed, in the proof we use that the positive eigenvalues of \(D_{S^{n-1}}\) satisfy
\[
\lambda\geq \frac{n-1}{2}\geq 1.
\]
This implies, for \(L\) sufficiently large,
\[
Q_\lambda:=e^{2\lambda L}+e^{-2\lambda L}-2e^L>0.
\]
When \(n=2\),  for
Neveu-Schwarz type spin structure on \(S^1\), the spectrum of Dirac operator is
\[
\operatorname{Spec}(D_{S^1})
=
\left\{\pm\left(k+\frac12\right): k=0,1,2,\ldots\right\}.
\]
In this case,
\[
Q_{1/2}=e^L+e^{-L}-2e^L=e^{-L}-e^L<0.
\]
Therefore the above proof does not give Lemma 3.3 in dimension two.

Nevertheless, in the Neveu--Schwarz case one still has three circles
inequality: for every \(0<\alpha<\frac12\), there exists \(L_0=L_0(\alpha)>0\)
such that, for all \(L>L_0\),
\[
\|\varphi\|^2_{L^2(\Sigma_l)}
\leq
\frac12 e^{-2\alpha L}
\left(
\|\varphi\|^2_{L^2(\Sigma_{l+1})}
+
\|\varphi\|^2_{L^2(\Sigma_{l-1})}
\right).
\]
Consequently, in dimension two , the conclusions
of Lemma \ref{lem:three-circle-perturbed} remain valid after replacing \(e^{-L/2}\) by \(e^{-\alpha L}\), where
\(0<\alpha<\frac12\). Correspondingly, in Lemma \ref{lem:three-circle-decay} the exponent
\(\hat\kappa=\min\{1,2\kappa\}\) should be replaced by
\[
\hat\kappa=\min\{2\alpha,2\kappa\}.
\]
\end{remark}

\subsection{Neck analysis on fixed Spin manifolds}
In this section, we prove the energy identity in Theorem~\ref{thm:fixed}
by establishing the following three circles theorem.
\begin{lemma}\label{lem:three-circle-perturbed}
Let $\phi:\Sigma_{l-1}\cup\Sigma_l\cup\Sigma_{l+1}\to {\mathbb S(\Sigma)}$ satisfy
\[
        D_\Sigma\phi=|\phi|^{\frac2{n-1}}\phi+\hat h,
\]
where $\hat h$ satisfies
\[
        |\hat h|\le C e^{-\kappa t}\big(|\phi|_{g_{\cyl}}+|\nabla_{\cyl}\phi|_{g_{\cyl}}\big)
\]
for some $\kappa>0$. Then there exist constants $\eps,\delta>0$ such that if
$$ \max\limits_{m=l-1,l,l+1}\|\phi\|_{L^{\frac{2n}{n-1}}(\Sigma_m)}<\eps$$ and
\begin{equation}\label{eq:h-small}
        \max_{m=l-1,l,l+1}\|\hat h\|_{L^p(\Sigma_m)}\le \delta\|\phi\|_{L^2(\Sigma_l)},
\end{equation}
the following hold:
\begin{enumerate}[label=(\alph*)]
\item $\|\phi\|_{L^2(\Sigma_{l+1})}\le e^{-L/2}\|\phi\|_{L^2(\Sigma_l)}$ implies
$\|\phi\|_{L^2(\Sigma_l)}\le e^{-L/2}\|\phi\|_{L^2(\Sigma_{l-1})}$;
\item $\|\phi\|_{L^2(\Sigma_{l-1})}\le e^{-L/2}\|\phi\|_{L^2(\Sigma_l)}$ implies
$\|\phi\|_{L^2(\Sigma_l)}\le e^{-L/2}\|\phi\|_{L^2(\Sigma_{l+1})}$;
\item either $\|\phi\|_{L^2(\Sigma_l)}\le e^{-L/2}\|\phi\|_{L^2(\Sigma_{l+1})}$ or
$\|\phi\|_{L^2(\Sigma_l)}\le e^{-L/2}\|\phi\|_{L^2(\Sigma_{l-1})}$.
\end{enumerate}
\end{lemma}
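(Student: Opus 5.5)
The natural route is a contradiction-and-compactness argument that reduces the perturbed statement to the harmonic three circles inequality of Lemma \ref{lem:three-circle-harmonic}. I would prove (c) first and obtain (a) and (b) from the same mechanism; (a) and (b) also follow from (c) directly. For (a), suppose $\|\phi\|_{L^2(\Sigma_{l+1})}\le e^{-L/2}\|\phi\|_{L^2(\Sigma_l)}$. By (c), either the conclusion of (a) holds, or $\|\phi\|_{L^2(\Sigma_l)}\le e^{-L/2}\|\phi\|_{L^2(\Sigma_{l+1})}\le e^{-L}\|\phi\|_{L^2(\Sigma_l)}$. The latter forces $\phi|_{\Sigma_l}=0$, and the conclusion is then trivial. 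Part (b) is symmetric. So the heart of the matter is (c), which I would establish in the stronger form
\[
\|\phi\|^2_{L^2(\Sigma_l)}\le \tfrac12 e^{-L}\big(\|\phi\|^2_{L^2(\Sigma_{l+1})}+\|\phi\|^2_{L^2(\Sigma_{l-1})}\big),
\]
which implies (c) because $\max$ dominates the average.

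Fix $L>L_0$ with $L_0$ from Lemma \ref{lem:three-circle-harmonic}, and suppose the inequality fails along a sequence $\phi_k$ with $\eps_k,\delta_k\to0$. Translating in $t$, I may assume all cylinders are $[-L,2L]\times S^{n-1}$ with $l=1$. Normalize $\tilde\phi_k=\phi_k/\|\phi_k\|_{L^2(\Sigma_1)}$. Since the inequality fails, $\|\tilde\phi_k\|_{L^2(\Sigma_{0}\cup\Sigma_2)}^2\le 2e^{L}$ is uniformly bounded. The equation for $\tilde\phi_k$ reads
\[
D_\Sigma\tilde\phi_k=|\phi_k|^{\frac2{n-1}}\tilde\phi_k+\hat h_k/\|\phi_k\|_{L^2(\Sigma_1)},
\]
and the inhomogeneous term has $L^p$ norm at most $\delta_k\to0$ by \eqref{eq:h-small}.

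The potential $|\phi_k|^{2/(n-1)}$ is small in $L^n$ because $\|\phi_k\|_{L^{2n/(n-1)}}<\eps_k$. Transplanting Lemma \ref{lem:epsilon-reg-fixed} to the cylinder gives uniform $L^\infty$ and $W^{1,p}$ control of $\phi_k$ on slightly smaller sets. Together with the smallness of the potential, the absorption argument of \eqref{eq:critical-absorption} and Lemma \ref{lem:linear-eps} then give uniform $W^{1,2}$ bounds for $\tilde\phi_k$ on compact subsets, so that $\|V_k\tilde\phi_k\|_{L^2}\to 0$ on compacts. By Rellich, a subsequence converges in $L^2_{loc}$ to a harmonic spinor $\phi_\infty$ on the open cylinder $(-L,2L)\times S^{n-1}$.

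The main obstacle is the boundary of the three-cylinder region. Compactness is only local in the interior, so $L^2$ norms on the end pieces $\Sigma_0$ and $\Sigma_2$ may lose mass near $t=-L$ and $t=2L$, and the strict inequality could degenerate. I would handle this in two steps. First, I would apply the contradiction argument on slightly larger domains, or use the fact that mass loss only \emph{decreases} the limiting norms on $\Sigma_0,\Sigma_2$ (lower semicontinuity): the failing inequality passes to the limit as $1=\|\phi_\infty\|^2_{L^2(\Sigma_1)}\ge\frac12e^{-L}(\ldots)$, and $\Sigma_1$ is interior, so its norm converges. Second, I would exploit the strict margin in Lemma \ref{lem:three-circle-harmonic}: the positivity $K^{(l)}_{\lambda,j}>0$ comes with room, so the harmonic inequality still holds for slightly shrunken outer cylinders. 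Combining the two contradicts the harmonic three circles inequality, since $\phi_\infty\neq0$ because its $L^2(\Sigma_1)$ norm equals $1$. Finally, the constants $\eps,\delta$ depend only on $n$, $L$ and $p$, as required.
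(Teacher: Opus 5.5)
Your proposal is correct and is essentially the paper's argument. Both run a contradiction with $\eps_k,\delta_k\to0$, normalize by $\|\phi_k\|_{L^2(\Sigma_l)}$, and use the local bounds of Lemmas~\ref{lem:epsilon-reg-fixed} and \ref{lem:linear-eps} to get strong $L^2$ convergence on the middle cylinder and weak $L^2$ convergence (hence lower semicontinuity of the norms) on the outer ones, producing a harmonic limit that contradicts the strict inequality of Lemma~\ref{lem:three-circle-harmonic}. Proving the averaged form of (c) and deducing (a), (b) from it by the elementary case analysis is a tidy way to organize what the paper calls ``the same'' proofs, and your second step about shrunken outer cylinders is unnecessary, since the strictness in Lemma~\ref{lem:three-circle-harmonic} already absorbs the non-strict limiting inequality from lower semicontinuity.
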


\begin{proof}
We prove (a) by contradiction; the proofs of (b) and (c) are the same. Suppose (a) fails.
Then there exist sequences $\eps_k,\delta_k\to0$ and spinors $\phi_k$ satisfying
\[
        D_\Sigma\phi_k=|\phi_k|^{\frac2{n-1}}\phi_k+h_k,
        \qquad
         \max_{m=l-1,l,l+1}\|\phi_k\|_{L^{\frac{2n}{n-1}}(\Sigma_m)}\to0,
\]
with $\max\limits_{l-1,l,l+1}\|h_k\|_{L^p}\le\delta_k\|\phi_k\|_{L^2(\Sigma_l)}$, but
\[
        \|\phi_k\|_{L^2(\Sigma_l)}>e^{-L/2}\|\phi_k\|_{L^2(\Sigma_{l+1})},
        \qquad
        \|\phi_k\|_{L^2(\Sigma_l)}>e^{-L/2}\|\phi_k\|_{L^2(\Sigma_{l-1})}.
\]
Set $\widetilde{\phi} _k=\phi_k/\|\phi_k\|_{L^2(\Sigma_l)}$. By Lemmas~\ref{lem:epsilon-reg-fixed} and
\ref{lem:linear-eps}, $\widetilde{\phi} _k$ is uniformly bounded in $W^{1,2}_{\mathrm{loc}}$ and, after passing to a
subsequence, converges strongly in $L^2(\Sigma_l)$ and weakly in $L^2$ on the three adjacent
cylinders to a harmonic spinor $\widetilde{\phi} $. The above inequalities pass to the limit and contradict
Lemma~\ref{lem:three-circle-harmonic}.
\end{proof}

The following lemma is a direct corollary of Lemma~\ref{lem:three-circle-perturbed}.

\begin{lemma}\label{lem:three-circle-decay}
Let $\phi:\Sigma\to \mathbb{S}(\Sigma)$ satisfy
\[
        D_\Sigma\phi=|\phi|^{\frac2{n-1}}\phi+\hat h,
\]
where $\hat h$ satisfies
\[
        |\hat h|\le C e^{-\kappa t}(|\phi|_{g_{\cyl}}+|\nabla_{\cyl}\phi|_{g_{\cyl}})
\]
for some $\kappa>0$, and assume that for every $l=l_0,\ldots,l_i$,
$\|\phi\|_{L^{\frac{2n}{n-1}}(\Sigma_l)}<\eps$. Then the following estimate holds:
\[
        \|\phi\|_{L^2(\Sigma_l)}^2
        \le C\eps^2\left(e^{-\hat\kappa(l-l_0)L}+e^{-\hat\kappa(l_i-l)L}\right)
\]
for $\hat\kappa = \min\{ 1, 2\kappa\}$.
\end{lemma}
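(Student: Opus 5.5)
Set $a_l:=\|\phi\|_{L^2(\Sigma_l)}$ for $l_0\le l\le l_i$. The plan is the standard three-circle iteration: apply Lemma~\ref{lem:three-circle-perturbed} on each triple $\Sigma_{l-1}\cup\Sigma_l\cup\Sigma_{l+1}$, and separate the indices where the smallness hypothesis \eqref{eq:h-small} on $\hat h$ fails.

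\emph{Where \eqref{eq:h-small} fails.} There $a_l$ is dominated by the error term. By Lemma~\ref{lem:epsilon-reg-fixed} (in cylinder form, unit size on each $\Sigma_m$) we have $\|\nabla\phi\|_{L^p(\Sigma_m)}+\|\phi\|_{L^p(\Sigma_m)}\le C\eps$. Together with $|\hat h|\le Ce^{-\kappa t}(|\phi|+|\nabla\phi|)$ this gives $\|\hat h\|_{L^p(\Sigma_m)}\le C\eps e^{-\kappa(l-1)L}$. Hence failure of \eqref{eq:h-small} forces
\[
a_l\le \delta^{-1}C\eps e^{-\kappa lL}\le C\eps e^{-\kappa(l-l_0)L},
\]
after absorbing $e^{-\kappa l_0L}=\delta_{\rm geom}^{\kappa}\le1$ and constants. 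This already matches the target bound with exponent $2\kappa$ after squaring.

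\emph{Where \eqref{eq:h-small} holds.} Here the trichotomy (a)--(c) applies. By (c), each such $l$ either decays forward, $a_l\le e^{-L/2}a_{l+1}$, or backward, $a_l\le e^{-L/2}a_{l-1}$. By (a) and (b), once $a_{l+1}\le e^{-L/2}a_l$ holds, backward decay propagates toward larger $l$; symmetrically, forward decay propagates toward smaller $l$.

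\emph{The chain along the cylinder.} Along any maximal run of good indices, the sequence $a_l$ is therefore first decreasing geometrically with ratio $e^{-L/2}$ as $l$ moves away from $l_0$, and then increasing geometrically toward $l_i$. There is at most one switch, since the pattern ``forward then backward'' would contradict (a)/(b). The endpoints are bounded by $a_{l_0},a_{l_i}\le C\eps$, using Hölder on the unit-size $\Sigma_m$ and the $L^{2n/(n-1)}$ smallness. This yields
\[
a_l\le C\eps\bigl(e^{-(l-l_0)L/2}+e^{-(l_i-l)L/2}\bigr).
\]
Squaring gives exponent $1$.

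\emph{Combining.} Inside a run, the bad indices serve as new starting points whose values are at most $C\eps e^{-\kappa(l-l_0)L}$, so restarting the chain from a bad index $m$ gives
\[
a_l\le C\eps\bigl(e^{-\kappa(m-l_0)L}e^{-(l-m)L/2}+\cdots\bigr)\le C\eps e^{-\min\{1/2,\kappa\}(l-l_0)L}.
\]
Squaring yields the stated bound with $\hat\kappa=\min\{1,2\kappa\}$.

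The main obstacle is this bookkeeping at the interface between good and bad indices. One must check that the constant does not accumulate across restarts. It does not: each restart uses only the single bad value and the geometric ratio, not a product of constants. Also, the cross term from one restart must be dominated by the global exponential. To handle this, I would take the largest bad index $m\le l$ and the smallest bad index $m'\ge l$ and argue only within $[m,m']$, using the endpoint values there. The symmetric tail $e^{-\hat\kappa(l_i-l)L}$ is covered trivially, since bad indices near $l_i$ are even smaller.
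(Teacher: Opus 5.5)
Your argument is correct. It is the standard three-circle iteration that the paper leaves implicit when it calls Lemma~\ref{lem:three-circle-decay} a direct corollary of Lemma~\ref{lem:three-circle-perturbed} (no proof is written there), including the split into indices where \eqref{eq:h-small} holds or fails and the restart between consecutive bad indices $m\le l\le m'$. One slip: (a) propagates $a_m\le e^{-L/2}a_{m-1}$ toward \emph{smaller} $m$, and (b) propagates $a_m\le e^{-L/2}a_{m+1}$ toward \emph{larger} $m$, which is the reverse of your propagation sentence; the decreasing-then-increasing shape you actually use afterwards is nonetheless the correct consequence.
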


\begin{proof}[Proof of Theorem~\ref{thm:fixed}]
By the standard induction argument in \cite{DingTian}, we only need to prove the theorem in the case
where there is only one bubble. For two
or more bubbles forming a bubble tree, the proof follows by repeating the same neck argument
between adjacent scales. The proof of Theorem~\ref{thm:fixed} is equivalent to proving
\begin{equation}\label{eq:fixed-neck-zero}
        \lim_{\delta\to0}\lim_{R\to\infty}\lim_{k\to\infty}
        \int_{B_\delta\setminus B_{\lambda_kR}} |\psi_k|^{\frac{2n}{n-1}}\,dV_g=0.
\end{equation}
Following the Ding and Tian's paper \cite{DingTian}, under the one bubble assumption,
for any $\eps>0$, when $k$, $R$ and $1/\delta$ are large enough, each cylinder
$\Sigma_l$ in the neck satisfies
\[
        \|\phi_k\|_{L^{\frac{2n}{n-1}}(\Sigma_l)}<\eps,
\]
where $\phi_k:=F^{-1}(e^{-\frac{n-1}{2}t}\psi_k)$.
By Lemma~\ref{lem:epsilon-reg-fixed}, $\|\psi_k\|_{L^\infty(\Sigma_l)}\le C\eps$, and the same estimate holds for $\phi_k$. Then, using Lemma~\ref{lem:three-circle-decay},
\[
\begin{aligned}
\int_{B_\delta\setminus B_{\lambda_kR}} |\psi_k|^{\frac{2n}{n-1}}\,dV_g
&\leq C\int_\Sigma |\phi_k|^{\frac{2n}{n-1}}\,dV_{g_{\cyl}}\\
&\le C\eps^{\frac2{n-1}}\int_\Sigma |\phi_k|^2\,dV_{g_{\cyl}}\\
&\le C\eps^{\frac2{n-1}}
\sum_{l=l_0}^{l_i}\eps^2\left(e^{-\hat\kappa(l-l_0)L}+e^{-\hat\kappa(l_i-l)L}\right).
\end{aligned}
\]
Letting $k\to\infty$, $R\to\infty$, $\delta\to0$, and finally $\eps\to0$, we obtain
\eqref{eq:fixed-neck-zero}.
\end{proof}

\vskip1cm

\section{Bubble-neck decomposition in the degenerating case}\label{section4}

\vskip0.5cm

We first recall the compactness theory for non-collapsed Einstein $n$-manifolds. Consider a
sequence $(M_k,g_k)$ of closed $n$-dimensional Einstein manifolds with uniformly bounded
Einstein constants. Assume the manifolds are non-collapsed:
\[
        \diam(M_k,g_k)\le D,
        \qquad \vol(M_k,g_k)\ge V>0,
\]
and their curvature tensors $\Rm_{g_k}$ have uniformly bounded $L^{n/2}$ norms, i.e.
\[
        \int_{M_k}|\Rm_{g_k}|^{\frac n2}\,dV_{g_k}\le R.
\]
By classical works, we can suppose that $(M_k,g_k)$ converges smoothly except at the set
$S$ of singular points up to a subsequence. Then for every $x_a\in S$, there is a point
$x_{a,k}\in M_k$ such that for a positive constant $r_\infty$ sufficiently small,
\[
        \sup_{B(x_{a,k},r_\infty)}|\Rm_{g_k}|=|\Rm_{g_k}|(x_{a,k})\to\infty
        \quad\text{as }k\to\infty,
\]
and
\[
        \int_{B(x_a,r_\infty)}|\Rm_{g_\infty}|^{\frac n2}\le \frac\eps2
\]
with a small positive number $\eps\le\bar\eps/2$. For sufficiently large $k$ we can find a
positive number $r_k>0$ so that
\[
        \int_{B(x_{a,k},r_\infty)\setminus B(x_{a,k},r_k)}
        |\Rm_{g_k}|^{\frac n2}\,dV_{g_k}=\eps.
\]
It is easy to see that $r_k\to0$ as $k\to\infty$. In the above $\bar{\varepsilon}$ is the positive
 number determined by the small energy regularity theorem for Einstein metrics
  (see e.g. \cite{Anderson1989,Tian,Anderson1992,Bando,CheegerTian})

\begin{theorem}[\cite{Nakajima1988,Anderson1989, BKN, Tian,  Bando, Anderson1992,Anderson1995, Nakajima1994}]\label{thm:einstein-compactness}
Let $(M_k,g_k)$ be a sequence of closed Einstein {$n$-manifolds} as above. Then there
exists a subsequence, still denoted by $k$, and a compact Einstein orbifold $(M_\infty,g_\infty)$
with a finite set of orbifold singular points $S=\{x_1,\ldots,x_s\}\subset M_\infty$ for which
the following statements hold:
\begin{enumerate}[label=(\arabic*)]
\item $(M_k,g_k)$ converges to $(M_\infty,g_\infty)$ in the Gromov-Hausdorff distance.
There exists a diffeomorphism $F_k:M_\infty\setminus S\to M_k$ onto its image such that
$F_k^*g_k$ converges to $g_\infty$ in the $C^\infty$ topology on compact subsets of
$M_\infty\setminus S$.
\item For every $x_a\in S$ and $k$, let $x_{a,k}\in M_k$ and $r_k$ be chosen as above. Then:
\begin{enumerate}[label=(2.\alph*)]
\item $B(x_{a,k},\delta)$ converges to $B(x_a,\delta)$ in the Gromov-Hausdorff distance
for all $\delta>0$.
\item Up to a subsequence, $((M_k,r_k^{-2}g_k),x_{a,k})$ converges to
$((Y,h),y_\infty)$ in the pointed Gromov-Hausdorff distance, where $(Y,h)$ is a complete,
Ricci-flat, non-flat ALE {$n$-orbifold} with a finite singular set. The convergence is smooth
outside the singular points.
\item There exist positive constants $C>0$ and $\eps_5>0$ such that for
$4r_k\le r<4r\le r_\infty$,
\[
        r^2|\Rm_{g_k}|
        \le C\max\left\{\left(\frac{r_k}{r}\right)^{\eps_5},
        \left(\frac{r}{r_\infty}\right)^{\eps_5}\right\}.
\]
\end{enumerate}
\item In the case that there is only one ALE bubble manifold at each singular point, we have:
\begin{enumerate}[label=(3.\alph*)]
\item $((M_k,r_k^{-2}g_k),x_{a,k})$ converges to $((M_a,h_a),x_{a,\infty})$, where
$(M_a,h_a)$ is a Ricci-flat ALE {$n$-manifold or orbifold}.
\item There exists a diffeomorphism $G_k:M_a\to M_k$ onto its image such that
$G_k^*(r_k^{-2}g_k)$ converges to $h_a$ in the $C^\infty$ topology on compact subsets.
\end{enumerate}
\item If there are several bubble manifolds or orbifolds at some singular points, by repeating
the same process, one obtains a bubble tree $\operatorname{Tr}_{\ALE}$ consisting of finitely
many Ricci-flat ALE bubble spaces.
\item Curvature energy identity:
\[
        \lim_{k\to\infty}\int_{M_k}|\Rm_{g_k}|^{\frac n2}\,dV_{g_k}
        =\int_{M_\infty}|\Rm_{g_\infty}|^{\frac n2}\,dV_{g_\infty}
        +\sum_j\int_{X_j}|\Rm_{h_j}|^{\frac n2}\,dV_{h_j},
\]
where $\{(X_j,h_j)\}$ is the set of all ALE bubble spaces in the tree.
\end{enumerate}
Consequently, for $1<K_1<K_2$ sufficiently large, the annulus
\[
        \big(B(x_{a,k},K_2^{-1}r_\infty)\setminus B(x_{a,k},K_1r_k),r_k^{-2}g_k\big)
\]
is close to a portion of the flat cone $\R^n/\Gamma$ for large $k$, where
$\Gamma$ is a finite group in $\SO(n)$.
\end{theorem}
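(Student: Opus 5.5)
This theorem collects the known compactness theory for non-collapsed Einstein manifolds, so the proof will assemble the cited results rather than argue from scratch.

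\textbf{Step 1: orbifold limit.} The uniform bound on $\lambda_k$ and the $L^{n/2}$ curvature bound give, by Cheeger--Gromov/Anderson compactness, a subsequence converging in the Gromov--Hausdorff sense to a compact Einstein orbifold $(M_\infty,g_\infty)$. The diameter and volume bounds give non-collapsing, i.e.\ a uniform lower bound on $\operatorname{vol}B(x,r)/r^n$ via Bishop--Gromov. Away from a finite set $S$ of points where at least $\bar\eps$ of curvature energy concentrates, the $\eps$-regularity theorem for Einstein metrics gives uniform bounds on $|\Rm|$ and on all its derivatives. This yields the $C^\infty$ convergence of item (1) after choosing the diffeomorphisms $F_k$ via harmonic coordinates. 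Since $\int|\Rm|^{n/2}\le R$, the cardinality of $S$ is at most $R/\bar\eps$.

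\textbf{Step 2: blow-up and orbifold structure at each $x_a$.} With $x_{a,k}$ and $r_k$ chosen as in the text, rescaling by $r_k^{-2}$ yields metrics with Einstein constant $r_k^2\lambda_k\to0$ and bounded $L^{n/2}$ curvature. The same compactness gives a pointed limit $(Y,h)$ that is Ricci-flat and has finitely many orbifold points. It is non-flat because the energy normalisation forces $\int_{B(y_\infty,r_\infty/r_k)}|\Rm|^{n/2}$ to retain a definite amount. It is ALE by the Bando--Kasue--Nakajima theorem, since it has Euclidean volume growth and finite $L^{n/2}$ curvature. The orbifold structure at $x_a$, with group $\Gamma\subset\SO(n)$, comes from the fact that tangent cones are flat cones $\R^n/\Gamma$, which again follows from curvature decay. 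Items (3) and (4) follow by iterating this construction at the singular points of $Y$. The process terminates because each bubble carries at least $\bar\eps$ of energy.

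\textbf{Step 3: neck estimate (2.c) — the main obstacle.} To control $|\Rm|$ on $B(x_{a,k},r_\infty)\setminus B(x_{a,k},r_k)$, where by construction the $L^{n/2}$ curvature energy is at most $\eps$, I would argue as follows.

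1. By $\eps$-regularity on each dyadic annulus, $r^2|\Rm|\le C\big(\int_{A_r}|\Rm|^{n/2}\big)^{2/n}$, which gives smallness on every annulus.
2. To upgrade smallness to polynomial decay, I would use a three-annulus (Łojasiewicz-type) argument, in the spirit of the three circles lemmas above. The Einstein equation in harmonic coordinates on an annulus close to the flat cone $\R^n/\Gamma$ is a small perturbation of the Laplacian on the cone.
3. Decomposing into spherical harmonics on $S^{n-1}/\Gamma$, the gap between the growth and decay rates yields the decay $\max\{(r_k/r)^{\eps_5},(r/r_\infty)^{\eps_5}\}$.

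This is the delicate analytic step; I would take it from Bando/Nakajima and Anderson.

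\textbf{Step 4: energy identity and neck geometry.} The curvature energy identity (5) follows because the neck energy $\int r^{-n}\max\{\cdots\}^{n/2}\,dV$ is summable over dyadic annuli, hence tends to $0$ as $K_1,K_2\to\infty$. The final assertion, that the neck is close to a portion of $\R^n/\Gamma$, follows by integrating the curvature decay of (2.c) along radial geodesics. The scale-invariant smallness of $r^2|\Rm|$ gives $C^{1,\alpha}$ closeness of each rescaled annulus to a flat annulus. Since the necks are connected through adjacent annuli, the quotient group is the same $\Gamma$ as the tangent cone group at $x_a$.
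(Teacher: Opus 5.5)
Your overall route matches the paper's. The paper gives no argument for this theorem and simply quotes the compactness theory from the cited works, and you likewise assemble it from those sources. But one step you do argue fails as written: the non-flatness of $Y$ in Step 2. The rescaled metrics carry energy at least $\bar\eps-\eps$ on the unit ball about $x_{a,k}$, and at least $\bar\eps$ on the ball of radius $r_\infty/r_k$, whose radius tends to infinity. This does not pass to the limit, because curvature energy is only lower semicontinuous. It may concentrate at the orbifold points of $Y$, which you explicitly allow, and the $\eps$ lying outside $B(x_{a,k},r_k)$ may drift into the neck. A priori $Y$ could be the flat cone $\R^n/\Gamma$ with all the curvature disappearing into its vertex; this is the ghost-bubble phenomenon of energy-normalised bubble trees.

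Ruling this out needs two further ingredients: the choice of $x_{a,k}$ as the maximum point of $|\Rm_{g_k}|$ on $B(x_{a,k},r_\infty)$, and the neck estimate (2.c).
\begin{itemize}
\item If $r_k^2|\Rm_{g_k}|(x_{a,k})$ stays bounded, the convergence is smooth everywhere and the energy of the unit ball survives, so $Y$ is non-flat.
\item If it is unbounded, curvature concentrates at $y_\infty$, so a flat $Y$ would be $\R^n/\Gamma$ with its vertex at $y_\infty$. The exactly $\eps$ of energy in $A_{r_k,r_\infty}(x_{a,k})$ must then tend to $0$ on $A_{r_k,Kr_k}$, by smooth convergence to a flat metric away from the vertex. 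It is $O(K^{-n\eps_5/2})$ on $A_{Kr_k,K^{-1}r_\infty}$ by (2.c). It is at most $\eps/2+o(1)$ on $A_{K^{-1}r_\infty,r_\infty}$ by the choice of $r_\infty$. This is a contradiction for $K$ large.
\end{itemize}
So Step 3 must come before the non-flatness claim, not after it.

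Step 3 is not a gap in itself, since you defer it to the literature. However, the heuristic you give would not produce decay as stated. In harmonic coordinates the linearised Einstein operator on the flat cone has homogeneity-zero solutions: constant symmetric tensors, i.e.\ linear changes of coordinates. A three-annulus lemma for $g-\delta$ therefore only works after subtracting these scale by scale and controlling their drift, which is the Cheeger--Tian rate-of-convergence issue. If instead you run it on $\Rm$, which satisfies schematically $\Delta\Rm=\Rm*\Rm+\lambda\Rm$, then for $n=4$ the harmonic mode $C\,r^{-2}$ gives $r^2|\Rm|\sim\mathrm{const}$. That is an indicial root exactly at the critical rate. It is excluded by the second Bianchi identity, which forces $(n-2)C=0$, and not by the spectral gap on $S^{n-1}/\Gamma$.

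Similarly, a flat tangent cone at $x_a$ does not by itself give a smooth orbifold chart. That requires the removable singularity theorem for Einstein metrics with finite $L^{n/2}$ curvature and Euclidean volume growth (Bando--Kasue--Nakajima, Tian, Anderson).
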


There is a refined geometric picture of the degenerating neck
 region $A_{r_kR,\delta}(x_{a,k})$. In \cite{ChenZhu2024}, the authors construct two types of good global coordinates on this region: $(x)$ and $(y)$ in the case $n=4$. In the coordinates $(x)$, the metrics $g_j$ are $C^0$ close to the flat metric. While, in the coordinates $(y)$, the metrics $g_j$ are $C^m$ close to the flat metric in some weighted function space for any $m\geq  0$.

\begin{theorem}\label{thm:neck-coordinates}
For  $\delta_1>0$ small enough and for $R>0$ and $k>0$ sufficiently large, there exist two coordinates $(x)$ and $(y)$ on $A_{r_kR,\delta_1}(x_{a,k})\subset M_k$ such that the following properties hold:
in coordinates $(x)$, there hold
\begin{eqnarray*}
& &|g_{k,jl}( x)-\delta_{jl}|<C \eta_k(|x|),\\
& &|x(\cdot)|\equiv r(\cdot)=d_{g_k}(\cdot,x_{a,k})\quad \text{and} \quad \nabla_{\partial_r}\partial_r=0,
\end{eqnarray*}
and in coordinates $(y)$, there holds for some $0<\alpha<1$ that
\begin{equation*}
\|g_{k,jl}( y)-\delta_{jl}\|_{C^{4,\alpha}_{\eta_k(|y|)}}<C,
\end{equation*}
where $\eta_k(r)= \left(\left(\frac{r_k}{r}\right)^{\varepsilon_5}+\left(\frac{r}{r_\infty}\right)^{\varepsilon_5}  \right),$ $r_\infty>0$, $\varepsilon_5 >0$, $r_k>0$ are the same as in the curvature estimates in Theorem \ref{thm:einstein-compactness}.
Moreover,
\begin{equation*}
\|y-x\|_{C^{1}_{|y|\eta_k(|y|)}}<C.
\end{equation*}
\end{theorem}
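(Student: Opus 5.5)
The plan is to construct both coordinate systems on the neck \(A_{r_kR,\delta_1}(x_{a,k})\) by gluing scale-by-scale approximate cone charts and then correcting them, as in \cite{ChenZhu2024}. The higher dimensional case should go through the same way once the curvature decay (2.c) of Theorem~\ref{thm:einstein-compactness} is available.

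\textbf{Dyadic charts from the curvature decay.} Write \(\eta_k(r)\) as in the statement. By (2.c), on each dyadic annulus \(A_{r,2r}\) with \(r_kR\le r\le\delta_1\) the rescaled metric \(r^{-2}g_k\) has curvature bounded by \(C\eta_k(r)\), which is small. The injectivity radius is controlled because the neck is non-collapsed and Gromov--Hausdorff close to \(\R^n/\Gamma\). Elliptic regularity for the Einstein equation in harmonic coordinates then gives higher-derivative bounds \(|\nabla^m\Rm|\le C_m\eta_k(r)r^{-2-m}\). Passing to the universal cover \(\Gamma\)-equivariantly, one obtains on each annulus a chart in which \(r^{-2}g_k\) is \(C^{m}\)-close to flat with error \(C\eta_k(r)\). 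The summability \(\sum_{\text{dyadic}}\eta_k(2^jr_kR)\le C\) (a geometric series in both directions) is what makes the charts compatible globally.

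\textbf{Coordinates \((x)\).} Define \(x\) by the distance function \(r=d_{g_k}(\cdot,x_{a,k})\) and an angular coordinate obtained by radially projecting along unit-speed geodesics from \(x_{a,k}\) to a fixed cross-section \(S^{n-1}/\Gamma\), so that \(\nabla_{\partial_r}\partial_r=0\) holds automatically. In these geodesic polar coordinates the metric is \(dr^2+r^2\sigma_r\). The Riccati/Jacobi equation for the shape operator, together with \(|\Rm|\le C\eta_k(r)r^{-2}\), gives \(|\sigma_r-\sigma_{S^{n-1}}|\le C\eta_k(r)\) after integrating from the scale where the neck matches the cone. Integrating from both ends and using the decay of \(\eta_k\) handles both the ALE end and the orbifold end. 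This yields the \(C^0\) closeness \(|g_{k,jl}(x)-\delta_{jl}|<C\eta_k(|x|)\).

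\textbf{Coordinates \((y)\).} Obtain \(y\) by solving \(\Delta_{g_k}y^j=0\) with \(y\) close to \(x\). Set \(y=x+u\), where \(u\) solves \(\Delta_{g_k}u=-\Delta_{g_k}x\), and note \(|\Delta_{g_k}x|\lesssim\eta_k(|x|)\) by the previous step. Solve this in weighted Hölder spaces \(C^{k,\alpha}_{|x|\eta_k}\) on the annulus, inverting the flat cone Laplacian on \(\Gamma\)-equivariant functions. The weights \(r^{1\pm\eps_5}\) avoid the indicial roots of \(\Delta_{\R^n}\), provided \(\eps_5\) is shrunk if necessary. This gives \(\|y-x\|_{C^1_{|y|\eta_k}}<C\). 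In harmonic coordinates the Einstein equation becomes elliptic in \(g_{jl}\), namely \(\Delta g_{jl}+Q(g,\partial g)=-2\lambda_kg_{jl}\). Weighted Schauder estimates on each dyadic annulus, rescaled to unit size, then upgrade the \(C^0\) bound \(\eta_k\) to \(\|g_{k,jl}-\delta_{jl}\|_{C^{4,\alpha}_{\eta_k}}<C\).

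\textbf{Main obstacle.} The main obstacle is the global weighted inversion of the Laplacian on the whole neck with bounds uniform in \(k\). The neck has length \(\sim\log(\delta_1/(r_kR))\to\infty\), and the kernel or indicial terms from the lowest spherical harmonics must be excluded by the choice of weight. I would first try to reduce to the flat cone by a perturbation argument, since the error \(\eta_k\le 2(R^{-\eps_5}+(\delta_1/r_\infty)^{\eps_5})\) is small for \(R\) large and \(\delta_1\) small. Then I would use a Neumann series in the weighted space, with boundary conditions chosen at both ends to kill the homogeneous solutions \(r^{0}\) and \(r^{2-n}\) of each mode.
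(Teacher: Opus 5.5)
\paragraph{The construction of $(x)$ fails.} The paper gives no argument of its own here. It quotes the $n=4$ construction of \cite{ChenZhu2024} and asserts that $n>4$ is a routine modification. Your two-step outline follows a Bando--Kasue--Nakajima-type scheme: a $C^0$ chart from the decay (2.c), then a harmonic correction in weighted spaces, then a bootstrap for the Einstein equation. Presumably that is the scheme behind the citation. The step that fails is building $(x)$ as geodesic polar coordinates centred at $x_{a,k}$.

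First, $\Gamma$ is nontrivial: a Ricci-flat ALE space with trivial group at infinity has asymptotic volume ratio $1$, so it is flat by Bishop--Gromov. Now suppose $r=d_{g_k}(\cdot,x_{a,k})$ were $C^1$ with $|\nabla r|=1$ on the neck. Then the neck is diffeomorphic to $\Sigma_t\times I$ for a level set $\Sigma_t$, so $\pi_1(\Sigma_t)\cong\Gamma\neq 1$. Each $q\in\Sigma_t$ would have a unique minimizing geodesic from $x_{a,k}$. Sending $q$ to the initial unit vector of that geodesic gives a continuous injection of the closed $(n-1)$-manifold $\Sigma_t$ into the unit sphere of $T_{x_{a,k}}M_k$. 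By invariance of domain this is a homeomorphism onto $S^{n-1}$, which contradicts $\pi_1(\Sigma_t)\cong\Gamma\neq1$.

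So every distance sphere in the neck meets the cut locus of $x_{a,k}$. Projection along geodesics from $x_{a,k}$ is not a chart, and $\nabla_{\partial_r}\partial_r=0$ is not automatic. Also, your Riccati integration along these geodesics would start from data produced in the bubble core, which (2.c) does not control.

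\paragraph{What can be proved instead.} The same argument shows that the clause $|x|\equiv d_{g_k}(\cdot,x_{a,k})$ in the statement cannot hold literally. The paper only ever uses $(y)$ together with the comparability $|y|\simeq d_{g_k}(\cdot,x_{a,k})$. That can be obtained from a chart whose radial function is $r_0+d_{g_k}(\cdot,\Sigma_0)$. Here $\Sigma_0$ is an almost-round hypersurface at radius $r_0\sim r_kR$, transplanted from the ALE chart of $M_a$. Its outward normal exponential map has no focal points by the Riccati equation. Injectivity is plausible by a continuity argument using closeness to the cone at each scale, though that needs to be checked.

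Even then, to get $|g_{k,jl}(x)-\delta_{jl}|\le C\eta_k(|x|)$ you must identify the cross-section with the round $S^{n-1}/\Gamma$ at the scale where $\eta_k$ is smallest, using the Gauss equation. Summing the drift $\int\eta_k(\rho)\rho^{-1}\,d\rho$ from an end, as your summability remark suggests, gives an error of size $R^{-\varepsilon_5}+(\delta_1/r_\infty)^{\varepsilon_5}$ in the middle of the neck. That is not $O(\eta_k(r))$.

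\paragraph{A smaller gap in the $(y)$ step.} The estimate $|\Delta_{g_k}x|\lesssim\eta_k(|x|)/|x|$ does not follow from a $C^0$ bound on $g_{k,jl}(x)$. Since $\Delta_g x^j=-g^{il}\Gamma^j_{il}$, the term $r^{-1}(\Delta_{\sigma_r}-\Delta_{S^{n-1}})\theta^j$ needs angular derivatives of $\sigma_r$. You first have to upgrade $(x)$ to a weighted $C^{1,\alpha}$ bound, for example by Jacobi-field derivative estimates using $|\nabla^m\Rm|\le C_m\eta_k r^{-2-m}$. Without that you get neither $\|y-x\|_{C^1_{|y|\eta_k}}<C$ nor the $C^0$ bound on $g_{k,jl}(y)$ that starts your Schauder bootstrap.

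\paragraph{The weighted inversion.} For the $\Gamma$-equivariant map $y-x$ there is no degree-zero mode. The relevant indicial root is $1$, coming from equivariant linear maps such as $x$ itself. Your weight exponents $1\mp\varepsilon_5$ lie on opposite sides of it, so ``killing $r^0$ and $r^{2-n}$'' is not the mechanism. Instead, split the source according to the two summands of $\eta_k$. For the inner piece, integrate the degree-one mode from the outer end; for the outer piece, integrate it from the inner end. This gives bounds independent of the neck length, and your perturbation from $\Delta_{\R^n}$ then goes through.
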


We remark that the arguments in \cite{ChenZhu2024} can be modified without any essential difficulty to give the proof of the above theorem in the $n>4$ case.

Here we say coordinates on the degenerating neck region,
we mean a map
\[\varphi=\mathcal{L}^{-1}\circ proj:\mathbb{R}^n \rightarrow  A_{r_kR,\delta_1}(x_{a,k}),\]
where $\mathcal{L}$ is a diffeomorphism from $ A_{r_kR,\delta_1}(x_{a,k})$
to $\mathbb{R}^n/\Gamma$, and $proj$ is the natural projection from
$\mathbb{R}^n$ to $\mathbb{R}^n/\Gamma$.  The weighted H\"{o}lder space
 $C^{m,\alpha}_{\eta}(A_{r_1,r_2})$ ($0<\alpha<1$) with weight $\eta$
 (a function of $|x|=r$) is the space of functions in
 $C^{m,\alpha}(A_{r_1,r_2}(0))$ ($0\in\mathbb{R}^n$) with bounded weighted
 norm $||\cdot||_{C^{k,\alpha}_{\eta}}$, which is defined as follows:
\begin{eqnarray*}
||f||_{C^{m,\alpha}_{\eta}}&=&\sum_{j=0}^{m}\sup_{A_{r_1,r_2}(0)}\eta^{-1}|x|^{j}|D^{j}f|\\
& &+\sup_{x\neq y}
\min \ (\eta^{-1}|x|^{m+\alpha},\eta^{-1}|y|^{m+\alpha})\frac{|D^{m}f(x)-D^{m}f(y)|}{|x-y|^{\alpha}}.
\end{eqnarray*}

\subsection{Blow-up away from orbifold singularity}\label{sec:blowup-away-orbifold}
We first consider the case where the blow-up occurs away from the orbifold singularities. By
Theorem~\ref{thm:einstein-compactness}, outside the singular point $x_a$, the sequence
$(M_k,g_k)$ converges smoothly to $(M_\infty,g_\infty)$. For any neighborhood $U$ of $x_a$,
since $H^1(M_\infty\setminus U;\Z_2)$ is finite, after passing to a subsequence we may assume
that each diffeomorphism $F_k$ is spin compatible. Via $F_k$, we regard $\psi_k$ as a spinor
field $\phi_k$ on $M_\infty\setminus U$. Because $F_k^*g_k$ converges to $g_\infty$ in the
$C^\infty$ topology on $M_\infty\setminus\{x_a\}$, a diagonal argument shows that $\phi_k$
converges weakly to a spinor $\psi_\infty$ on $M_\infty\setminus\{x_a\}$, and $\psi_\infty$
satisfies the limiting Dirac system.

The spin structure on $M_\infty\setminus\{x_a\}$ extends to an orbifold spin structure on
$M_\infty$ whenever the local isotropy lifts to $\Spin(n)$.
More precisely, in an orbifold coordinate neighborhood $U\simeq B^n/\Gamma$ of $x_a$, with
$\Gamma\subset\SO(n)$ finite, we choose a lift
$\widehat\Gamma\subset\Spin(n)$ such that
$\lambda|_{\widehat\Gamma}:\widehat\Gamma\to\Gamma$ is an isomorphism.  Locally
\[
        \mathbb S(M_\infty)|_U\simeq (B^n\times \mathbb{S}_n )/\widehat\Gamma,
\]
and the orbifold Dirac operator is the one induced from the
$\widehat\Gamma$-equivariant Dirac operator on the covering ball.  This is the spin structure
obtained from the smooth spin structures on $M_k$ through the spin-compatible
Cheeger-Gromov embeddings and the extension theorem for orbifolds whose singular set has
codimension at least four \cite{BelgunGinouxRademacher,KapovitchLott}.
Let $\pi:B^n\setminus\{0\}\to U\setminus\{x_a\}$ be the covering map. Then the pull-back
spinor $\widetilde\psi_\infty=\pi^*\psi_\infty$ satisfies the same equation on
$B^n\setminus\{0\}$ with respect to the pulled-back metric $\pi^*g_\infty$. By the
removable singularity theorem, $\widetilde\psi_\infty$ extends smoothly to the
whole ball $B^n$. Consequently, $\psi_\infty$ is continuous near $x_a$ as an orbifold spinor.

Next, suppose that $\psi_k$ blows up at a nondegenerate point $x_{b,k}$, i.e. the convergence
$(B(x_{b,k},r),g_k)\to(B(x_b,r),g_\infty)$ is smooth in the Cheeger-Gromov sense. Then,
via the smooth diffeomorphisms, we may locally view $\psi_k$ as a spinor field defined on
$B(x_b,r)$. Without loss of generality we assume that, away from the orbifold singularity
$x_a$, the sequence $\psi_k$ blows up only at the point $x_b$ and produces exactly one bubble
 $\xi^b$. In local coordinates near the blow-up point $x_b$, there exists a sequence
$\lambda_k\to0$ such that
\[
        {\lambda_k^{\frac{n-1}{2}}\psi_k(\lambda_k x)\longrightarrow \xi^b,}
\]
where
\[
        {\xi^b: \R^n\to \Gamma({\mathbb S(\R^n)})^d}
\]
is a solution of Dirac system \eqref{eq:intro-system} with respect to the Euclidean metric.
Following an argument analogous to the proof of Theorem~\ref{thm:fixed}, we obtain the
following bubble tree convergence result away from the orbifold singularities.

\begin{theorem}\label{thm:away-orbifold}
Let $\psi_k$ be a sequence of spinor fields on $(B(x_b,1),g_k)$ satisfying the critical Dirac
system and assume that for some constant $C>0$,
\[
        E(\psi_k,B_1)=\int_{B_1}|\psi_k|^{\frac{2n}{n-1}}_{g_k}\,dV_{g_k}\le C.
\]
If $\{\psi_k\}$ blows up at $x_b$ away from the orbifold singularity and produces exactly one
non-trivial bubble {$\xi^b\in\Gamma({\mathbb S(\R^n)})^d$}, then
\[
        \lim_{\delta\to0}\lim_{R\to\infty}\lim_{k\to\infty}
        \int_{B_\delta\setminus B_{\lambda_kR}} |\psi_k|^{\frac{2n}{n-1}}\,dV_{g_k}=0.
\]
\end{theorem}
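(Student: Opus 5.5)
The plan is to reduce Theorem~\ref{thm:away-orbifold} to the fixed-manifold neck argument used in the proof of Theorem~\ref{thm:fixed}. The only new point is that the metrics $g_k$ vary, so every constant must be shown to be uniform in $k$.

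\emph{Step 1 (uniform coordinates).} Since $x_b$ is away from the orbifold singularity, $F_k^*g_k\to g_\infty$ in $C^\infty$ on $B(x_b,1)$. Taking normal coordinates of $g_k$ centred at the blow-up points $x_{b,k}$ (or at $x_b$), we obtain on a ball of fixed radius $\delta_0$ metric coefficients whose $C^2$ norms are bounded uniformly in $k$. Hence the Bourguignon--Gauduchon trivialization and Proposition~\ref{prop:BG} apply with uniform constants. Each $\psi_k$ then becomes a Euclidean spinor solving \eqref{eq:Euclidean-approx} with $H^i_{jlt}$ replaced by the transported coefficients, and the error satisfies \eqref{eq:h-fixed-bound} with a constant $C$ independent of $k$. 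This uniformity is the point that needs checking, since $\mathbf{W},\mathbf{V},\mathbf{Q}$ are built from $g_k$ and its first two derivatives. The smooth convergence also gives uniform $C^1$ bounds on the coefficients, which is what the nonlinear term requires.

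\emph{Step 2 (cylinder and small energy).} Pass to the cylinder $\Sigma=[-\ln\delta,-\ln(\lambda_kR)]\times S^{n-1}$ via \eqref{eq:conformal-cylinder}. The transformed spinor $\phi_k$ solves \eqref{eq:cylinder-equation}, and the error obeys $|\hat h|\le Ce^{-t}(|\phi_k|+|\nabla\phi_k|)$, i.e.\ the hypothesis of Lemma~\ref{lem:three-circle-decay} with $\kappa=1$. Here $r=e^{-t}$, and the conformal weights on $\nabla$ produce at worst a factor $e^{-t}$.

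Under the one-bubble assumption, the Ding--Tian argument gives $\|\phi_k\|_{L^{2n/(n-1)}(\Sigma_l)}<\eps$ on every $\Sigma_l$ once $k$, $R$ and $1/\delta$ are large. If this failed, rescaling at the bad scale would produce a second nontrivial bubble on $\R^n$, because the metrics become Euclidean after rescaling. Lemma~\ref{lem:epsilon-reg-fixed} then gives $\|\phi_k\|_{L^\infty}\le C\eps$ and $W^{1,p}$ control on each $\Sigma_l$. These bounds also verify the smallness condition \eqref{eq:h-small}: $\|\hat h\|_{L^p(\Sigma_m)}\le Ce^{-(l-1)L}\|\phi_k\|_{L^2}$-type bounds on neighbouring cylinders, which is small for $l\ge l_0$ when $\delta$ is small.

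\emph{Step 3 (decay and summation).} The expected obstacle is the use of Lemma~\ref{lem:three-circle-perturbed}, in particular transferring $\|\hat h\|$ on $\Sigma_{l\pm1}$ to $\|\phi\|_{L^2(\Sigma_l)}$. I would argue dichotomously: where (\ref{eq:h-small}) fails, $\|\phi_k\|_{L^2(\Sigma_l)}\lesssim e^{-lL}\eps$ already, which is an acceptable bound. Elsewhere the three-circle alternative applies, and the Ding--Tian/Lin--Wang style iteration gives the estimate of Lemma~\ref{lem:three-circle-decay} with $\hat\kappa=\min\{1,2\kappa\}=1$.

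Finally, exactly as in the proof of Theorem~\ref{thm:fixed},
\[
\int_{B_\delta\setminus B_{\lambda_kR}}|\psi_k|^{\frac{2n}{n-1}}dV_{g_k}\le C\eps^{\frac{2}{n-1}}\sum_{l=l_0}^{l_i}\eps^2\big(e^{-(l-l_0)L}+e^{-(l_i-l)L}\big)\le C\eps^{\frac{2n}{n-1}}.
\]
Here $dV_{g_k}$ is uniformly comparable to the Euclidean volume by Step 1. Letting $k\to\infty$, $R\to\infty$, $\delta\to0$ and then $\eps\to0$ gives the claim.
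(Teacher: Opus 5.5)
Your proposal is correct and follows the paper's route. The paper proves this by saying only that it is analogous to the proof of Theorem~\ref{thm:fixed}, which runs through the Bourguignon--Gauduchon trivialization, the cylinder transformation, Ding--Tian smallness on each $\Sigma_l$, $\varepsilon$-regularity, the three-circle decay of Lemma~\ref{lem:three-circle-decay} (here with $\kappa=1$), and summation. You add two things the paper leaves implicit: a check that all constants are uniform in $k$, via the smooth Cheeger--Gromov convergence near $x_b$, and a dichotomy argument that handles the smallness hypothesis \eqref{eq:h-small} where it fails.
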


\subsection{Blow-up at the orbifold singularity}\label{sec:blowup-orbifold}

We now investigate the blow-up of the sequence $\psi_k$ at an orbifold singularity
$x_a\in M_\infty$. For simplicity, we assume that there is only one ALE bubble space $M_a$
at $x_a$. By Theorem~\ref{thm:einstein-compactness}, for each $k$ there exist
$x_{a,k}\in M_k$ and a sequence of positive numbers $r_k\to0$ such that
\[
        ((M_k,r_k^{-2}g_k),x_{a,k})\longrightarrow ((M_a,h_a),x_{a,\infty})
\]
in the Gromov-Hausdorff distance, where $(M_a,h_a)$ is a complete, noncompact,
Ricci-flat and non-flat {$n$-dimensional} ALE space. Moreover, there exist diffeomorphisms
$G_k:M_a\to M_k$ such that $G_k^*(r_k^{-2}g_k)$ converges to $h_a$ in the $C^\infty$
topology on compact subsets of $M_a$.

The pull-back spin structures $G_k^*\sigma_k$ on compact subsets of $M_a^{\reg}$ determine,
after passing to a subsequence, a spin structure on the regular part of the ALE limit.  As in
Section~\ref{sec:blowup-away-orbifold}, this structure extends across the orbifold points of
$M_a$.  Thus $M_a$ is regarded as a Ricci-flat ALE spin orbifold, and $D_{h_a}$ is defined
by lifting to local uniformizing charts and descending the equivariant Dirac operator.  On the
ALE end $M_a\setminus K\simeq (\R^n\setminus B_R)/\Gamma_a$, the inherited lift
$\widehat\Gamma_a\subset\Spin(n)$ also defines the spinor bundle and the descended Euclidean
Dirac operator on the cone $\R^n/\Gamma_a$.

For any $R>0$, $B(x_{a,k},r_kR)$ corresponds
to the ALE bubble domain, $M_k\setminus B(x_{a,k},\delta)$ is the base domain, and
\[
        A_{r_kR,\delta}(x_{a,k})=B(x_{a,k},\delta)\setminus B(x_{a,k},r_kR)
\]
is the degenerating neck region. At the orbifold singularity $x_a$, several bubbles may occur.
Assume that
\[
        {(\lambda_k^j)^{\frac{n-1}{2}}\psi_k:
        (M_k,(\lambda_k^j)^{-2}g_k,x_{a,j,k})\longrightarrow \xi^j}
\]
converges, as $k\to\infty$, to a bubble spinor $\xi^j$, and that
\[
        \lim_{k\to\infty}d_{g_k}(x_{a,j,k},x_{a,k})=0.
\]
According to the relative size of the blow-up scales, we distinguish the following three cases:
\begin{description}[leftmargin=0pt,labelsep=0.5em,itemsep=0.4em]
\item[\textbf{Case A}]
\[
\lim_{k\to\infty} \frac{d_{g_k}(x_{a,j,k}, x_{a,k})}{r_k} < \infty,
\qquad
\lim_{k\to\infty} \frac{\lambda_k^j}{r_k} = 0.
\]

\item[\textbf{Case B}]
\[
\lim_{k\to\infty} \frac{d_{g_k}(x_{a,j,k}, x_{a,k})}{r_k} < \infty,
\qquad
0 < \lim_{k\to\infty} \frac{\lambda_k^j}{r_k} < \infty.
\]

\item[\textbf{Case C}]
others.
\end{description}

In what follows, the bubbles in Cases A, B and C are denoted by
{$\xi^A$, $\xi^B$ and $\xi^C$}, respectively.

In Case A, we have
\[
        {(\lambda_k^A)^{\frac{n-1}{2}}\psi_k:
        (M_k,(\lambda_k^A)^{-2}g_k,x^A_{a,k})\longrightarrow
        \xi^A\in\Gamma({\mathbb S(\R^n)})^d.}
\]
By an argument analogous to that of Theorem~\ref{thm:fixed}, there is no energy loss in the
neck region
\[
        A_{\lambda_k^AR,\delta r_k}(x^A_{a,k})
        =B(x^A_{a,k},\delta r_k)\setminus B(x^A_{a,k},\lambda_k^AR).
\]
In Case B, without loss of generality, we may assume that $x_{a,j,k}=x_{a,k}$ and
$r_k=\lambda_k^B$. Then
\[
        {(\lambda_k^B)^{\frac{n-1}{2}}\psi_k:
        (M_k,(\lambda_k^B)^{-2}g_k,x_{a,k})\longrightarrow
        \xi^B\in\Gamma({\mathbb S(M_a,h_a)})^d.}
\]
For Case C, a natural question arises: what is the behavior of $\psi_k$ on the neck region
\[
        A_{r_kR,\delta}(x_{a,k})=B(x_{a,k},\delta)\setminus B(x_{a,k},r_kR)
\]
as $k\to\infty$?

\begin{remark}\label{rem:A-on-B}
By our construction, bubble {$\xi^A$} lies on the bubble {$\xi^B$}, and by applying
the same arguments as in Theorem~\ref{thm:fixed}, we know that there is no energy loss in
the neck region
\[
        A_{\lambda_k^AR,\delta r_k}(x^A_{a,j})
        =B(x^A_{a,k},\delta r_k)\setminus B(x^A_{a,k},\lambda_k^AR).
\]
\end{remark}

Motivated by the blow-up analysis of biharmonic maps on non-collapsed degenerating Einstein
manifolds and of harmonic maps on degenerating Riemann surfaces, we set
$T_k=\log(r_kR)$, $T(\delta)=\log\delta$, and obtain the following bubble-neck decomposition.

\begin{proposition}\label{prop:bubble-neck}
With notations and assumptions above.
\begin{enumerate}[label=(\arabic*)]
\item Asymptotic boundary conditions:
\[
        \lim_{k\to\infty}\omega(\psi_k,P_{T_k,T_k+L})=0,
        \qquad
        \lim_{\delta\to0}\omega(\psi_k,P_{T(\delta)-L,T(\delta)})=0,
        \qquad \forall L\ge1,
\]
where $P_{T_1,T_2}$ is the cylinder corresponding to
$A_{e^{T_1},e^{T_2}}(x_{a,k})\subset M_k$, and
\[
        \omega(\psi_k,P_{T_1,T_2})
        =\sup_{t\in[T_1,T_2-1]}
        \int_{A_{e^t,e^{t+1}}(x_{a,k})}|\psi_k|^{\frac{2n}{n-1}}_{g_k}\,dV_{g_k}.
\]
\item Bubble domain and neck domain: after selection of a subsequence, which we still denote
by $\psi_k$, the following two alternatives hold:
\begin{enumerate}[label=(2.\arabic*)]
\item
\[
        \lim_{\delta\to0}\lim_{R\to\infty}\lim_{k\to\infty}\omega(\psi_k,P_{T_k,T(\delta)})=0.
\]
\item There exists some number $N_2>0$ independent of $k$, and $2N_2$ sequences of numbers
$\{a_k^1\},\{b_k^1\},\ldots,\{a_k^{N_2}\},\{b_k^{N_2}\}$ such that
\[
        T_k\le a_k^1\ll b_k^1\le\cdots\le a_k^{N_2}\ll b_k^{N_2}\le T(\delta),
        \qquad a_k^\alpha\ll b_k^\alpha\text{ means }b_k^\alpha-a_k^\alpha\to\infty,
\]
and
\[
        |b_k^\alpha-a_k^\alpha|\ll |T_k|,
        \qquad
        \lim_{k\to\infty}\frac{|b_k^\alpha-a_k^\alpha|}{|T_k|}=0.
\]
Denote
\[
        J_k^\alpha=P_{a_k^\alpha,b_k^\alpha},\qquad \alpha=1,\ldots,N_2,
\]
\[
        I_k^0=P_{T_k,a_k^1},\qquad
        I_k^{N_2}=P_{b_k^{N_2},T(\delta)},\qquad
        I_k^\alpha=P_{b_k^\alpha,a_k^{\alpha+1}},
        \quad \alpha=1,\ldots,N_2-1.
\]
Then:
\begin{enumerate}[label=(\roman*)]
\item[({\romannumeral1})] for every $\alpha=0,1,\ldots,N_2$,
\[
        \lim_{\delta\to0}\lim_{R\to\infty}\lim_{k\to\infty}\omega(\psi_k,I_k^\alpha)=0;
\]
\item[({\romannumeral2})] for every $\alpha=1,\ldots,N_2$, there is a bubble tree which consists of at most
finitely many finite-energy bubble spinors on $\R^n/\Gamma$. For simplicity, assume there is
only one such bubble spinor, namely,
\[
        {\xi^{C,\alpha}\in\Gamma({\mathbb S(\R^n/\Gamma)})^d,}
\]
such that
\[
        \lim_{\delta\to0}\lim_{R\to\infty}\lim_{k\to\infty}
        E(\psi_k,\bar J_k^\alpha)=E(\xi^{C,\alpha}),
\]
where $\bar J_k^\alpha$ is the region in $M_k$ corresponding to $J_k^\alpha$.
\end{enumerate}
\end{enumerate}
\end{enumerate}
\end{proposition}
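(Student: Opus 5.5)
We prove Proposition~\ref{prop:bubble-neck} by transporting the Ding--Tian / Zhu style bubble-neck decomposition to the degenerating neck $A_{r_kR,\delta}(x_{a,k})$. We work in the coordinates $(x)$ of Theorem~\ref{thm:neck-coordinates}. There $|x|=d_{g_k}(\cdot,x_{a,k})$, so the annuli $A_{e^t,e^{t+1}}(x_{a,k})$ are exactly the images of the unit cylinders $[t,t+1]\times S^{n-1}/\Gamma$. The metric is $C^0$-close to the flat cone metric with error $\eta_k(|x|)$, and this error is uniformly small on $P_{T_k,T(\delta)}$ once $R$ is large and $\delta$ is small. Since the $L^{\frac{2n}{n-1}}$ energy is conformally invariant, $\omega(\psi_k,\cdot)$ is comparable, up to a factor $1+o(1)$, to the corresponding cylindrical quantity for $\phi_k=F^{-1}(e^{-\frac{n-1}{2}t}\psi_k)$ on $\R\times S^{n-1}/\Gamma$.

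\emph{Step 1 (boundary conditions).} We argue by contradiction. Suppose $\omega(\psi_k,P_{T_k,T_k+L})\ge\eps_1>0$ along a subsequence. Then some annulus $A_{e^t,e^{t+1}}$ with $t\in[T_k,T_k+L]$, that is, at scale comparable to $r_kR$, carries energy at least $\eps_1$. Rescale by $r_k$ and pass to the limit on $(M_a,h_a)$ via $G_k$. The region $\{R\le |x|\le Re^L\}$ lies in the ALE end. By the definition of Case B, all energy at this scale is accounted for by $\xi^B$ together with any Case A bubble on it, which lie in $B(x_{a,k},r_kR)$ for $R$ large. Hence the energy of $\xi^B$ outside $B_R$ tends to zero as $R\to\infty$, and this contradicts $\eps_1$ unless a further concentration occurs there. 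Such a concentration would be a new bubble at scale $\sim r_k$ located at distance $\sim r_kR$ from $x_{a,k}$, which is excluded by the one-bubble-per-case assumption. Alternatively, we can take the limit in $k$ first: strong local convergence to $\xi^B$ on compact subsets away from finitely many points yields the claim. The statement at $T(\delta)$ is symmetric. There the limit is $\psi_\infty$ on the orbifold, which is continuous at $x_a$ after lifting, so $\int_{B_{\delta}}|\psi_\infty|^{\frac{2n}{n-1}}\to0$ as $\delta\to0$. The absence of concentration on $A_{\delta e^{-L},\delta}$ for fixed $\delta$ follows because all blow-up points are isolated, so for $\delta$ small they lie either strictly inside the neck or at $x_a$.

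\emph{Step 2 (decomposition).} If (2.1) fails, there exist $\eps_2>0$ and $t_k\in[T_k,T(\delta)]$ with energy at least $\eps_2$ on $[t_k,t_k+1]$. By Step 1, $t_k-T_k\to\infty$ and $T(\delta)-t_k\to\infty$. We rescale $M_k$ by $e^{-2t_k}$ around $x_{a,k}$. By Theorem~\ref{thm:neck-coordinates} the rescaled metrics converge smoothly, in the $(y)$ coordinates lifted to $\R^n\setminus\{0\}$, to the flat cone metric $\R^n/\Gamma$ on compact subsets of $\R^n\setminus\{0\}$, because $\eta_k(e^{t_k}s)\to0$. The rescaled spinors $e^{\frac{n-1}{2}t_k}\psi_k$ have bounded energy. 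Using Lemma~\ref{lem:epsilon-reg-fixed}, lifted via the Bourguignon--Gauduchon trivialization with error coefficients controlled by $\eta_k$, they converge away from finitely many concentration points to a nontrivial solution $\xi^{C}$ on $(\R^n\setminus\{0\})/\Gamma$. This solution extends across $0$ by the removable singularity theorem and is nontrivial by the energy lower bound. Each such $\xi^C$ carries at least the $\eps$-regularity threshold of energy, and the total energy is bounded, so the number $N_2$ of distinct scale sequences $t_k^\alpha$ with $|t_k^\alpha-t_k^\beta|\to\infty$ is bounded independently of $k$. We then choose $a_k^\alpha=t_k^\alpha-L_k$ and $b_k^\alpha=t_k^\alpha+L_k$ with $L_k\to\infty$ slowly, so that $L_k=o(|T_k|)$ and the convergence to $\xi^{C,\alpha}$ holds on $P_{a_k^\alpha,b_k^\alpha}$. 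This gives (ii), where the energy identity on $\bar J_k^\alpha$ uses the fixed-manifold neck analysis (Theorem~\ref{thm:fixed}) inside the cone bubble, as in Remark~\ref{rem:A-on-B}. Property (i) then follows from a maximality argument. If some $I_k^\alpha$ still contained an annulus with energy at least $\eps_2$, the scale $t_k$ of that annulus would satisfy $|t_k-t_k^\beta|\to\infty$ for all $\beta$, or it would lie within bounded distance of some $t_k^\beta$, where it would be absorbed into $J_k^\beta$ after enlarging $L_k$. The first alternative produces an extra cone bubble and contradicts the choice of the family.

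\emph{Main obstacle.} The main obstacle is Step 2, in the passage to the limit on the cone. Two points need care. First, the spin structures must be compatible: we must show that the lifted spinor bundle on $\R^n\setminus\{0\}$ with the $\widehat\Gamma$-action from Section~\ref{sec:blowup-orbifold} is the limit of the spin structures on the necks. Second, the Dirac operators of $g_k$ in the $(y)$ coordinates must converge to the flat Dirac operator with coefficient errors small in $C^1$; this is needed for the uniform $\eps$-regularity. The weighted $C^{4,\alpha}_{\eta_k}$ bound of Theorem~\ref{thm:neck-coordinates} is exactly what gives $|a|+|b|\lesssim\eta_k$ after scaling, so I expect the estimate to go through. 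The spin-compatibility will be handled as in Section~\ref{sec:blowup-away-orbifold}, by passing to a subsequence using finiteness of $H^1(S^{n-1}/\Gamma;\Z_2)$.
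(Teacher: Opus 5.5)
Your proposal is correct and is essentially the paper's argument written out in detail. The paper states (1) as following directly from the assumptions, defers the construction of the bubble and neck domains and property (i) to Zhu's decomposition, and proves (ii) by rescaling $\bar J_k^\alpha$ to the flat cone $\R^n/\Gamma$; your normalization by $e^{t_k^\alpha}$, the centre of $J_k^\alpha$, is the natural choice of scale.

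One point needs tidying. The cone limit need not be nontrivial if the $\eps_2$-energy concentrates at a point of the unit annulus. So the bound on $N_2$, and the conclusion $\omega(\psi_k,I_k^\alpha)\to0$ rather than merely $<\eps_2$, should come from the universal amount of energy that each divergent scale carries. That amount is supplied either by the energy gap for nontrivial cone solutions or by the $\eps$-regularity threshold at a concentration point.
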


\begin{proof}
(1) follows easily from our assumption. (2) can be shown in the same spirit as in \cite{Zhu}.

For ({\romannumeral1}), it can be argued as in the proof for Proposition 3.1 of \cite{Zhu}.

For ({\romannumeral2}),
Let $\bar{J}_k^{\alpha}$ be the annulus $B(x_{a,k},e^{b_k^{\alpha}})\setminus B(x_{a,k},e^{a_k^{\alpha}})\subset M_k $ corresponding to ${J}_k^{\alpha}$.
Theorem \ref{thm:einstein-compactness} says that  $\bar{J}_k^{\alpha}\subset M_k$ looks like a portion of a flat cone $\mathbb{R}^n/\Gamma$ for large $k$.
So by the relation $a_k^{\alpha}\ll b_k^{\alpha}$, after scaling of the scale
 $\lambda_k^{C,\alpha}\equiv e^{\frac{a_k^{\alpha}-b_k^{\alpha}}{2}}$,
  $\left(\bar{J}_k^{\alpha}, (\lambda_k^{C,\alpha})^{-2}g_k \right)$
  converges to $\mathbb{R}^n/\Gamma$ as $k\rightarrow \infty$, and
   $\psi_k$ converges to $\xi^{C,\alpha}\in\Gamma({\mathbb S(\R^n/\Gamma)})^d$.

\end{proof}

\subsection{Blow-up at the singularity II: multiple ALE bubbles case}
In this subsection, we discuss how to construct the whole bubble tree for the convergence of
{spinor fields} in the case that several ALE bubble spaces emerge at the same orbifold
singularity from the Gromov-Hausdorff convergence of Einstein manifolds.

From the analysis in the previous subsections, we know that three types of bubble
{spinors} appear at the orbifold singularity, namely
\[
        {\xi^A: \R^n\to \Gamma({\mathbb S(\R^n)})^d,
        \qquad \xi^B: M_a\to \Gamma({\mathbb S(M_{a})})^d,
        \qquad \xi^C: \R^n/\Gamma\to \Gamma({\mathbb S(\R^n/\Gamma)})^d,}
\]
where $M_{a}$ represents an ALE bubble space in the tree $\mathbf{Tr_{ALE}}$
(see Theorem \ref{thm:einstein-compactness}).

Bubble of type $\xi^B $ occur in the place where the Riemannian curvatures
 concentrate, and the number of bubble of type $\xi^B$ is
  equal to the number of ALE bubble manifolds (orbifolds) in the tree
  $\mathbf{Tr_{ALE}}$.  Bubble of type $\xi^C$ appear in degenerating
   neck regions such as those discussed in Remark \ref{rem:A-on-B}.
   Bubble of type $\xi^A$ appear over the regions  where there is no
   degeneration of the metrics at its blow-up scale
    (they are on bubble of type $\xi^B$). With these in mind, one can
    easily construct the whole bubble tree by induction. Note that
    the whole process will be terminated in finite steps, since the number
     of non-trivial ALE bubble manifolds (orbifolds) is finite and the
     number of non-trivial bubble of type $\xi^A$ and $\xi^C$
     must be finite by the energy gap theorem and the finite total energy
      assumption.

By the analysis discussed before, to prove Theorem~\ref{thm:main}, we only need to prove the
following theorem.

\begin{theorem}\label{thm:neck-zero-deg}
Let $\bar I_k^\alpha=A_{e^{b_k^\alpha},e^{a_k^{\alpha+1}}}(x_{a,k})$ be the annulus region in
$M_k$ corresponding to $I_k^\alpha$ in Proposition~\ref{prop:bubble-neck}. Then
\[
        \lim_{\delta\to0}\lim_{R\to\infty}\lim_{k\to\infty}
        \int_{\bar I_k^\alpha}|\psi_k|_{g_k}^{\frac{2n}{n-1}}\,dV_{g_k}=0.
\]
\end{theorem}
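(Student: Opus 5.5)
The plan is to transplant the neck argument used for Theorem~\ref{thm:fixed} to the degenerating neck $\bar I_k^\alpha$. We use the coordinates $(y)$ of Theorem~\ref{thm:neck-coordinates} in place of normal coordinates and the cone $\R^n/\Gamma$ in place of $\R^n$.

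\textbf{Setting up the cylinder equation.} First lift the annulus $A_{r_kR,\delta_1}(x_{a,k})$ through $\varphi=\mathcal L^{-1}\circ proj$ to a $\Gamma$-invariant annulus in $\R^n$. By the spin compatibility discussed in Section~\ref{sec:blowup-away-orbifold}, $\psi_k$ becomes a $\widehat\Gamma$-equivariant spinor there. In the coordinates $(y)$, write $g_k=\delta+e_k$ with
\[
|e_k|+|y||\partial e_k|\le C\eta_k(|y|).
\]
Repeating the Bourguignon--Gauduchon computation of Proposition~\ref{prop:BG}, now with $b_i^j-\delta_i^j=O(\eta_k)$ and first derivatives $O(\eta_k/|y|)$ in place of $O(r^2)$ and $O(r)$, we obtain
\[
D_{\R^n}\psi_k^i=\sum H|\langle\psi_k^j,\psi_k^l\rangle|^{\frac1{n-1}}\psi_k^t+h_k^i,
\qquad
|h_k|\le C\eta_k(|y|)\big(|y|^{-1}|\psi_k|+|\nabla\psi_k|\big).
\]
Next pass to the cylinder $t=-\log|y|$ via \eqref{eq:conformal-cylinder}. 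The error becomes
\[
|\hat h_k|\le C\eta_k(e^{-t})\big(|\phi_k|+|\nabla_{\cyl}\phi_k|\big),
\qquad
\eta_k(e^{-t})=(r_ke^{t})^{\eps_5}+(e^{-t}/r_\infty)^{\eps_5},
\]
which decays exponentially toward both ends of the neck. This replaces the one-sided factor $e^{-\kappa t}$ of Lemma~\ref{lem:three-circle-perturbed} by a two-sided one. The cross-section is now $S^{n-1}/\Gamma$ rather than $S^{n-1}$. Its Dirac spectrum is the $\widehat\Gamma$-invariant part of $\Spec(D_{S^{n-1}})$, so $|\lambda|\ge\frac{n-1}{2}\ge\frac32$ still holds. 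Hence Lemma~\ref{lem:three-circle-harmonic} holds verbatim for $\widehat\Gamma$-invariant harmonic spinors, by restricting the expansion of Lemma~\ref{lem:harmonic-decomp} to invariant modes.

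\textbf{Small energy regularity.} By Proposition~\ref{prop:bubble-neck}(2)(i), every unit cylinder in $I_k^\alpha$ eventually carries energy $<\eps$. Applying Lemma~\ref{lem:epsilon-reg-fixed} on the $\Gamma$-covering annuli gives pointwise bounds $|\phi_k|+|\nabla\phi_k|\le C\eps^{(n-1)/(2n)}$ on $I_k^\alpha$. The lemma's hypothesis $|a|+|b|\le C|y|$ should be replaced by smallness of the rescaled coefficients, which is exactly what $\eta_k\ll1$ supplies. Combined with the exponential smallness of $\eta_k$, this yields the relative smallness condition \eqref{eq:h-small} on all cylinders $\Sigma_l$ except those where $\|\phi_k\|_{L^2(\Sigma_l)}$ is itself tiny.

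\textbf{Three-circle decay and conclusion.} Run the dichotomy of Lemma~\ref{lem:three-circle-perturbed} along $\Sigma_l\subset I_k^\alpha$, $l=l_0,\dots,l_i$. Cylinders failing \eqref{eq:h-small} satisfy
\[
\|\phi_k\|_{L^2(\Sigma_l)}\le C\delta^{-1}\eps\,\eta_k(e^{-lL}),
\]
and these are summable in $l$ since $\eta_k(e^{-t})$ is a sum of two exponentials. The remaining cylinders obey the Lemma~\ref{lem:three-circle-decay} type estimate
\[
\|\phi_k\|_{L^2(\Sigma_l)}^2\le C\eps^2\big(e^{-\hat\kappa(l-l_0)L}+e^{-\hat\kappa(l_i-l)L}\big),
\qquad \hat\kappa=\min\{1,2\eps_5\}.
\]
Then, exactly as in the proof of Theorem~\ref{thm:fixed},
\[
\int_{\bar I_k^\alpha}|\psi_k|^{\frac{2n}{n-1}}
\le C\eps^{\frac2{n-1}}\sum_l\|\phi_k\|_{L^2(\Sigma_l)}^2\le C\eps^{\frac{2}{n-1}+2}.
\]
Letting $k\to\infty$, $R\to\infty$, $\delta\to0$ and then $\eps\to0$ proves the claim.

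\textbf{Main obstacle.} I expect the hardest step to be controlling the perturbation uniformly on the whole neck. The error $\hat h_k$ is small only through $\eta_k$, which is not small near the middle of a very long neck in absolute terms but only relative to the endpoints. Near the interfaces with $J_k^\alpha$ the metric is cone-like rather than Euclidean, so we must make sure the contradiction argument of Lemma~\ref{lem:three-circle-perturbed} still produces a $\widehat\Gamma$-invariant harmonic limit. My first attempt is to apply the three-circle lemma with $L$ fixed and the weighted bound $\|g_k-\delta\|_{C^{4,\alpha}_{\eta_k}}<C$, so that the error over three consecutive cylinders is bounded by $C\sup\eta_k$, and then absorb this using the sum-of-exponentials structure, as in \cite{ChenZhu2024}.
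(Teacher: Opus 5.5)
Your proposal is correct and follows essentially the paper's route: the $(y)$-coordinates of Theorem~\ref{thm:neck-coordinates}, the Bourguignon--Gauduchon error of size $\eta_k$, and the cylinder three-circle lemma with perturbation $C\eta_k(e^{-t})(|\phi|+|\nabla_{\cyl}\phi|)$. It then uses the two-sided decay with exponent $\min\{1,2\varepsilon_5\}$ and sums over the $\Sigma_l$; your explicit handling of the $\Gamma$-quotient and of cylinders violating the relative smallness condition fills in details the paper leaves implicit. One slip: $\eta_k(e^{-t})$ is largest at the two ends of the neck (about $R^{-\varepsilon_5}$ and $(\delta/r_\infty)^{\varepsilon_5}$) and decays exponentially toward the middle, so it is uniformly small on the whole neck — the opposite of what you say in the setup and in your ``main obstacle'' paragraph — but your actual estimates only use its uniform smallness and summability, which are correct.
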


\vskip1cm

\section{Energy quantization in the degenerating case}\label{section5}

\vskip0.5cm

To prove Theorem~\ref{thm:neck-zero-deg}, we need to investigate the asymptotic {behavior} of the
spinor $\psi_k$ over the neck regions $I_k^\alpha$, which are sub-annuli of the degenerating neck
regions $A_{r_kR,\delta}(x_{a,k})\subset M_k$.

To begin with, we shall firstly recall the following two key results given in the previous sections.
\begin{enumerate}
\item[($\mathfrak{K}_1$)] According to the bubble-neck decomposition in Proposition~\ref{prop:bubble-neck},
$\bar I_k^\alpha\subset A_{r_kR,\delta}(x_{a,k})$, and
\[
        \lim_{\delta\to0}\lim_{R\to\infty}\lim_{k\to\infty}
        \omega(\psi_k,I_k^\alpha)=0.
\]
It says that the energy of $\psi_k$ can be as small as we need on
$A_{\rho,2\rho}(x_{a,k})\subset\bar I_k^\alpha$.
\item[($\mathfrak{K}_2$)] By Theorem~\ref{thm:neck-coordinates}, there exist coordinates $(y)$ on
$A_{r_kR,\delta}(x_{a,k})\subset M_k$ such that
\[
        \|g_{k,jl}(y)-\delta_{jl}\|_{C^{4,\alpha}_{\eta_k}(|y|)}<C.
\]
\end{enumerate}
Notice that the facts $(\mathfrak{K}_1)$ and $(\mathfrak{K}_2)$ in the above ensure that we can apply the
$\varepsilon$-regularity Lemma~\ref{lem:epsilon-reg-fixed} (or its generalizations) on the regions as
$A_{\rho,2\rho}(x_{a,k})$, since if we introduce the coordinates $\tilde y=\rho^{-1}y$ on
$A_{\rho,2\rho}(x_{a,k})$, then
\[
        \|g_{k,jl}(\rho\tilde y)-\delta_{jl}\|_{C^4(B_2\setminus B_1)}<\eta_k(\rho)
\]
for all $\rho\in[r_kR,\delta]$.

\subsection{The Bourguignon-Gauduchon trivialization on degenerating neck regions}
Let \((y_1, \dots, y_n)\)
be the coordinates on $A_{r_kR,\delta}(x_{a,k})\equiv \mathcal{V}\subset M_k$
in Theorem \ref{thm:neck-coordinates}. For simplicity of notations,
 we shall ignore the effect of the fundamental group of $A_{r_kR, \delta}(x_{a,k})$ and omit the lower index $k$ if there is no confusion when we carry out the neck analysis.
For any $y\in\mathcal{V}$, set \(G(y) = (g_{ji}(y))\) where \(g_{ji}(y) = g(\partial_j, \partial_i)\).
Define \(B(y)=\left(b_j^{i}(y)\right)\) as the unique symmetric positive-definite matrix satisfying \(B(y)^2 = G(y)^{-1}\), which induces an isometry
\[
B(y) : (\mathbb{R}^n , g_{\mathbb{R}^n}) \longrightarrow (T_y \mathcal{V}, g(y)).
\]
given by
\[
B(y) : \left( a^1, a^2, \ldots, a^n \right) \to \sum_{j,i} b_j^{i}(y) a^j \partial_i(y).
\]
This further induces a commutative diagram of  \(\mathrm{SO}(n)\)-principal bundles:
\[
\begin{tikzcd}
P_{\mathrm{SO}}({U \subset \mathbb{R}^n}, g_{\mathbb{R}^n}) \arrow[r, "\eta"] & P_{\mathrm{SO}}(\mathcal{V}, g) \\
{U \subset \mathbb{R}^n} \arrow[r, "(y)"] \arrow[u, leftarrow] & \mathcal{V}  \arrow[u, leftarrow]
\end{tikzcd}
\]
where \(\eta(e_1, \dots, e_n) = (Be_1, \dots, Be_n)\)
for an oriented frame $(e_1, \dots, e_n)$ on $U$, $(y)$ is the coordinates chart map in the above.
Since \(\eta\) is \(\mathrm{SO}(n)\)-equivariant, it can be lifted to an isomorphism between the corresponding  $\mathrm{Spin}(n)$-principal bundles:

\[
\begin{tikzcd}
    P_{\mathrm{Spin}}(U, g_{\mathbb{R}^n}) \arrow[r, "\bar{\eta}"] & P_{\mathrm{Spin}}(\mathcal{V}, g) \\
P_{\mathrm{SO}}(U, g_{\mathbb{R}^n}) \arrow[r, "\eta"]\arrow[u, leftarrow] & P_{\mathrm{SO}}(\mathcal{V}, g)\arrow[u, leftarrow] \\
U  \arrow[r, "(y)"] \arrow[u, leftarrow] & \mathcal{V}  \arrow[u, leftarrow]
\end{tikzcd}
\]
This isomorphism induces an isometric isomorphism between the spinor bundles:
\[
\begin{tikzcd}
\mathbb{S}(U, g_{\mathbb{R}^n}):=  P_{\mathrm{Spin}}(U, g_{\mathbb{R}^n})\times_\tau \mathbb{S}_n
 \arrow[r, "\bar{}"] & \mathbb{S}(\mathcal{V}, g):= P_{\mathrm{Spin}}(\mathcal{V}, g)\times_\tau \mathbb{S}_n
\end{tikzcd}
\]
explicitly given by
\[
\psi = [s, \varphi] \longmapsto  [\bar{\eta}(s), \varphi]=:\bar{\psi},
\]
where \([s, \varphi]\) denotes the equivalence class of \((s, \varphi)\) in \(P_{\mathrm{Spin}}(U, g_{\mathbb{R}^n})\times\mathbb{S}_n\).

\begin{proposition}
\label{prop:BG-dege}
    Denote by \(D_{\mathbb{R}^n}\) and \(D_{\mathcal{V}}\) the Dirac operators acting on \(\Gamma\bigl(\mathbb{S}(U, g_{\mathbb{R}^n})\bigr)\) and \(\Gamma\bigl(\mathbb{S}(\mathcal{V}, g)\bigr)\), respectively.
    Then

\[
D_{\mathcal{V}} \bar{\psi} = \overline{D_{\mathbb{R}^n} \psi} + \mathbf{W} \cdot \bar{\psi} + \mathbf{V} \cdot \bar{\psi} + \sum_{i,l} \mathbf{Q}_{il} \overline{\partial_i \cdot \nabla_{\partial_l} \psi},
\]

where
    \[
    \mathbf{W} = \frac{1}{4} \sum_{\substack{m,n,j \\ m \neq n \neq j }} b_m^r (\partial_r b_n^i) (b^{-1})_i^j  e_m \cdot e_n \cdot e_j=O(\frac{\eta_k(|y|)}{|y|}),
    \]
    \[
    \mathbf{V} = \frac{1}{2} \sum_{i,j} \widetilde{\Gamma}_{ij}^i e_j =  O(\frac{\eta_k(|y|)}{|y|}),
    \]
and \(\mathbf{Q}_{ji}=b_j^i - \delta_j^i = O(\eta_k(|y|))\).
Here $\eta_k$ is the weight function in Theorem \ref{thm:neck-coordinates}.
\end{proposition}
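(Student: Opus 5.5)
The plan is to redo the Bourguignon--Gauduchon computation of Proposition~\ref{prop:BG}, but with the neck coordinates $(y)$ of Theorem~\ref{thm:neck-coordinates} in place of normal coordinates. Only the error sizes change; the algebraic structure of the formula is the same. Write $G=(g_{ji})$ in the $(y)$-chart and $B=G^{-1/2}$. The frame $e_j=B e_j^{0}=b_j^i\partial_i$ is $g$-orthonormal, and the lift $\bar\eta$ identifies spinors, so $\overline{e_j\cdot\psi}=e_j\cdot\bar\psi$. We then compute $D_{\mathcal V}\bar\psi=\sum_j e_j\cdot\nabla^s_{e_j}\bar\psi$ in this frame.

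First, the spin connection in the frame $\{e_j\}$ is $\nabla^s_{e_m}\bar\psi=\overline{e_m(\psi)}+\frac14\sum_{n,j}g(\nabla_{e_m}e_n,e_j)\,e_n\cdot e_j\cdot\bar\psi$. The derivative term is $e_m(\psi)=b_m^l\partial_l\psi=\partial_m\psi+(b_m^l-\delta_m^l)\partial_l\psi$. Inserting this into $\sum_m e_m\cdot(\cdot)$ gives $\overline{D_{\R^n}\psi}$ plus the $\mathbf Q$-term with $\mathbf Q_{il}=b_i^l-\delta_i^l$.

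Second, we expand the connection coefficients, writing $\nabla_{e_m}e_n=b_m^r\big(\partial_r b_n^i\,\partial_i+b_n^i\Gamma^s_{ri}\partial_s\big)$. We then sort $\sum_{m,n,j}\frac14\Gamma_{mn}^{j}\,e_m\cdot e_n\cdot e_j$ by index pattern. Terms with $m,n,j$ pairwise distinct give $\mathbf W$. In the terms with a repeated index, the Clifford relations reduce the product to a single vector, and antisymmetry of $\Gamma_{mn}^{j}$ in $n,j$ leaves $\frac12\sum\widetilde\Gamma^i_{ij}e_j=\mathbf V$. This bookkeeping is verbatim the one in \cite{AGHM,Isobe}. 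It is purely pointwise algebra and does not use normal coordinates, so only the final estimates need revisiting.

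Third, the estimates. Theorem~\ref{thm:neck-coordinates} gives $|g_{jl}-\delta_{jl}|\le C\eta_k(|y|)$ and $|y|\,|\partial g_{jl}|\le C\eta_k(|y|)$ from the weighted $C^{4,\alpha}_{\eta_k}$ norm. Since $B=G^{-1/2}$ is a smooth function of $G$ near the identity, we get $|B-I|\le C\eta_k$, hence $\mathbf Q=O(\eta_k)$, and $|\partial B|\le C|\partial G|\le C\eta_k/|y|$. Because $B^{-1}$ and $B$ are bounded, $\mathbf W$ is $O(\eta_k/|y|)$. Christoffel symbols are $O(|\partial g|)=O(\eta_k/|y|)$, so $\widetilde\Gamma$, being built from $B$, $\partial B$ and $\Gamma$, is also $O(\eta_k/|y|)$, and therefore so is $\mathbf V$.

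The main obstacle is conceptual rather than computational. The neck has fundamental group $\Gamma$, so the chart is really defined on $\R^n$ (or an annulus) and descends through $\R^n\to\R^n/\Gamma$. We therefore work upstairs on the $\Gamma$-cover, with the lifted $\widehat\Gamma$-equivariant spin structure. Since $\eta$ is defined pointwise from $G$, it is $\Gamma$-equivariant, so the formula descends. The paper's stated convention of ignoring the fundamental group lets us treat this step briefly. One more point needs care: unlike in normal coordinates, $\mathbf V$ no longer gains from the vanishing of first derivatives at a center. The bound must come entirely from the weighted norm, which is exactly why the errors carry the factor $\eta_k/|y|$ rather than powers of $|y|$.
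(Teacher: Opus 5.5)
Your proposal is correct and follows essentially the same route as the paper. In both, the algebraic identity is taken over from the Bourguignon--Gauduchon computation of \cite{AGHM}, and the three error bounds follow from $|B-I|\le C\eta_k$, $|\partial B|\le C\eta_k/|y|$ and $|\Gamma|\le C\eta_k/|y|$, all read off from the weighted estimate on $g_{k,jl}$ in Theorem~\ref{thm:neck-coordinates}. Your derivation of the $B$-bounds from the smoothness of $G\mapsto G^{-1/2}$ near the identity is cleaner than the paper's eigen-decomposition step, whose orthogonal factor $Q(y)$ need not be close to the identity. Your remark about working $\Gamma$-equivariantly on the cover also fills in a point the paper explicitly sets aside.
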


\begin{proof}
We follow the arguments in \cite[Proposition 3.2]{AGHM}. Recall that \(B(y)^2 = G(y)^{-1}\), and by Theorem \ref{thm:neck-coordinates}, we have
\begin{equation}\label{almostflat11}
\left\|g_{k,ji}( y)-\delta_{ji}\right\|_{C^{4,\alpha}_{\eta_k(|y|)}}<C,
\end{equation}
so it follows that
\begin{equation}\label{almostorth}
\left\|b_{j}^i( y)-\delta_{ji}\right\|_{C^{4,\alpha}_{\eta_k(|y|)}}<C.
\end{equation}
Indeed, for each $y$ we may decompose $G(y)=Q(y)\Lambda(y) Q^{\bot}(y)$, where $\Lambda(y)=diag(\lambda_1,\cdots,\lambda_n)$ is a diagonal matrix, $Q(y)=(q_j^i)$  is an orthogonal matrix. By Gram-Schmidt orthogonalization process, we know that $\Lambda(y)$ is a diagonal matrix which is close to the identity matrix in the sense that
\begin{equation*}
\left\|\lambda_j(y)-1\right\|_{C^{4,\alpha}_{\eta_k(|y|)}}<C,
\end{equation*}
and
\begin{equation*}
\left\|q_{j}^i( y)-\delta_{ji}\right\|_{C^{4,\alpha}_{\eta_k(|y|)}}<C.
\end{equation*}
By using the knowledge in linear algebra we have that
\[B(y)=Q^{\bot}(y)\Lambda(y)^{-\frac{1}{2}} Q(y).\]
Therefore it is easy to see that \eqref{almostorth} in the above holds.

From \eqref{almostflat11}, it follows by direct computation that $\mathbf{V}  =  O(\frac{\eta_k(|y|)}{|y|})$.  And the estimates for $\mathbf{W}$ and $\mathbf{Q}_{ji}$ are the direct consequences of \eqref{almostorth}.

\end{proof}

Again by using the Bourguignon–Gauduchon trivialization and the transformation formula above, we can now convert the  equation \eqref{eq:varying-system} on the degenerating neck regions $\mathcal{V}$ into a Dirac equation on Euclidean space.
Assume that \(\bar{\psi}_k \in \Gamma(\mathbb{S}(\mathcal{V}))\) satisfies \eqref{eq:varying-system}. Via the Bourguignon–Gauduchon  trivialization, this corresponds to a spinor \(\psi_k \in \Gamma\bigl(\mathbb{S}(U)\bigr)\) on the Euclidean domain \(U\). Applying Proposition~\ref{prop:BG-dege}, we obtain an equivalent equation on \(U\):
\begin{equation}\label{dirac_dege}
    D_{\mathbb{R}^n}\psi_k=H|\psi_k|^{\frac{2}{n-1}}\psi_k +h_k
\end{equation}
where \(h_k\) satisfies
\[
|h_k| \leq C\,\left(O\left(\frac{\eta_k(|y|)}{|y|}\right)|\psi|+ O(\eta_k(|y|))|\nabla \psi| \right).
\]

Next, we employ the conformal invariance of the Dirac operator to transform equation \eqref{dirac_dege} into a Dirac equation on a cylinder.
For \((t, \theta)\in [-\ln \delta, -\ln (\lambda_k R)]\times S^{n-1}\), let \((r, \theta)\) be polar coordinates on $\mathbb{R}^n$ and define
\[
f: \Sigma := [-\ln \delta, -\ln (\lambda_k R)] \times S^{n-1} \longrightarrow U,\quad f(t, \theta) = (e^{-t}, \theta),
\]
where \([-\ln \delta, -\ln (\lambda_k R)] \times S^{n-1}\) is endowed with the product metric \(g_{\mathrm{cyl}} = dt^2 + d\theta^2\). The map \(f\) is a conformal diffeomorphism and it yields an isomorphism from \((T\Sigma, e^{-2t}g_{\mathrm{cyl}})\) to \((TU, g_{\mathbb{R}^n})\), which may be viewed as an isomorphism of \(\mathrm{SO}(n)\)-principal bundles. It lifts to an isomorphism of \(\mathrm{Spin}(n)\)-principal bundles and induces an isometric isomorphism

\[
F:\mathbb{S}(\Sigma, g_{\mathrm{cyl}})\longrightarrow \mathbb{S}(U, g_{\mathbb{R}^n}).
\]
As before we obtain the following commutative diagram:

\[
\begin{tikzcd}
    \mathbb{S}(\Sigma, g_{\mathrm{cyl}})\arrow[r, "F"]&
    \mathbb{S}(U, g_{\mathbb{R}^n})\arrow[r, "\bar{\eta}"]& \mathbb{S}(\mathcal{V}, g) \\
\Sigma \arrow[r, "f"] \arrow[u, leftarrow] &
U \arrow[r, "\exp_q"] \arrow[u, leftarrow] & \mathcal{V}  \arrow[u, leftarrow]
\end{tikzcd}
\]
Moreover, with respect to the metrics \(g_{\mathbb{R}^n}\) and \(g_{\mathrm{cyl}}\), the Dirac operators satisfy
\begin{equation}
\label{eq: confdege}
    D_{\Sigma}\,F^{-1}\!\Bigl(e^{-\frac{n-1}{2}t}\psi\Bigr)
    = e^{-\frac{n+1}{2}t}
    F^{-1}\!\Bigl(D_{{\mathbb{R}^n}}\psi\Bigr).
\end{equation}
Hence, if \(\psi_k\) is a solution of \eqref{dirac_dege}, then \(\varphi_k = F^{-1}\!\bigl(e^{-\frac{n-1}{2}t}\psi_k\bigr)\) satisfies
\begin{equation}
\label{eq: cylidege}
    D_{\Sigma}\varphi_k = |\varphi_k|_{g_{\mathrm{cyl}}}^{\frac{2}{n-1}} \varphi_k + \hat{h}_k,
\end{equation}
where \(\hat{h}_k= e^{-\frac{n+1}{2}t}F^{-1}h_k\), and

\[
\int_{\Sigma}|\varphi_k|_{g_{\mathrm{cyl}}}^{\frac{2n}{n-1}}\,dV_{g_{\mathrm{cyl}}}
= \int_{U}|\psi_k|_{g_{\mathbb{R}^n}}^{\frac{2n}{n-1}}\,dV_{g_{\mathbb{R}^n}} .
\]

\subsection{Elliptic estimates on degenerating neck regions}
\begin{lemma}\label{lem:neck-eps-reg}
Let $p>1$ and let $\psi:B_\delta\setminus B_{r_kR}\to{\mathbb S(\R^n)}$ be a solution of
\[
        D_{\R^n}\psi=|\psi|^{2/(n-1)}\psi+a(y)\psi+b(y)\nabla\psi,
\]
where $a(y)$ and $b(y)$ are smooth functions satisfying
$a(y)=O(\eta_k(|y|)/|y|)$ and $b(y)=O(\eta_k(|y|))$. Then for any
$r_kR<2r<5r/2<\delta$ there exist constants $\varepsilon>0$ and $C>0$ such that if
\[
        \int_{B_{5r/2}\setminus B_{r/2}}|\psi|^{2n/(n-1)}<\varepsilon,
\]
it holds that
\begin{equation}\label{eq:neck-W1p}
        \|\psi\|_{W^{1,p}(B_{2r}\setminus B_r)}
        \le Cr^{\frac{2n-p(n+1)}{2p}}
        \|\psi\|_{L^{2n/(n-1)}(B_{5r/2}\setminus B_{r/2})}.
\end{equation}
\end{lemma}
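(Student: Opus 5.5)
We prove the estimate by rescaling to a fixed annulus, as in Lemma~\ref{lem:epsilon-reg-fixed}, and the only new point is controlling the degenerating coefficients. We set $q=\frac{2n}{n-1}$ and $\widetilde\psi(z)=r^{\frac{n-1}{2}}\psi(rz)$ for $z\in B_{5/2}\setminus B_{1/2}$. Then $\widetilde\psi$ satisfies
\[
D_{\R^n}\widetilde\psi=|\widetilde\psi|^{\frac{2}{n-1}}\widetilde\psi+\widetilde a(z)\widetilde\psi+\widetilde b(z)\nabla\widetilde\psi,
\qquad \widetilde a(z)=r\,a(rz),\quad \widetilde b(z)=b(rz),
\]
and the $L^q$ norm is preserved.

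The key observation is that the hypotheses $a=O(\eta_k(|y|)/|y|)$ and $b=O(\eta_k(|y|))$ are scale invariant in exactly the right way. On $B_{5/2}\setminus B_{1/2}$ we have $|rz|\sim r$, so
\[
|\widetilde a|+|\widetilde b|\le C\eta_k(r)\le C\Big(\big(\tfrac{r_k}{r}\big)^{\eps_5}+\big(\tfrac{r}{r_\infty}\big)^{\eps_5}\Big)\le C\big(R^{-\eps_5}+(\delta/r_\infty)^{\eps_5}\big),
\]
using $r\ge r_kR/2$ and $r\le\delta$. Hence, for $R$ large and $\delta$ small (the regime of the neck analysis, with constants possibly adjusted by fixed factors), the coefficients are uniformly small in $L^\infty$ on the unit-scale annulus. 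This replaces estimate \eqref{eq:small-scaled-coefficients} in the fixed case.

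With this smallness in hand, we repeat the proof of Lemma~\ref{lem:epsilon-reg-fixed} verbatim. We localize with cut-offs on nested annuli and apply the $W^{1,s}$ estimate for the Euclidean Dirac operator. The critical term $|\widetilde\psi|^{\frac{2}{n-1}}\widetilde\psi$ is absorbed using Hölder, Sobolev and the smallness $\|\widetilde\psi\|_{L^q}^{2/(n-1)}\le\varepsilon^{1/n}$. The term $\widetilde b\nabla\widetilde\psi$ is absorbed by the smallness of $\|\widetilde b\|_\infty$, after writing $\zeta\nabla\widetilde\psi=\nabla(\zeta\widetilde\psi)-\nabla\zeta\,\widetilde\psi$ as in Lemma~\ref{lem:linear-eps}. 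The term $\widetilde a\widetilde\psi$ is bounded by $C\|\widetilde\psi\|_{L^s}$. Iterating $s\mapsto ns/(n-s)$ finitely many times gives $L^m$ bounds for all $m$. A final $L^p$ estimate on $B_2\setminus B_1$ then yields
\[
\|\widetilde\psi\|_{W^{1,p}(B_2\setminus B_1)}\le C\|\widetilde\psi\|_{L^q(B_{5/2}\setminus B_{1/2})}.
\]
All constants are independent of $k$, $r$, $R$ and $\delta$.

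Finally we scale back. We have $\|\widetilde\psi\|_{L^p(B_2\setminus B_1)}=r^{\frac{n-1}{2}-\frac np}\|\psi\|_{L^p(B_{2r}\setminus B_r)}$ and $\|\nabla\widetilde\psi\|_{L^p}=r^{\frac{n+1}{2}-\frac np}\|\nabla\psi\|_{L^p}$. This gives the scale-invariant form of \eqref{eq:scale-invariant-W1p}, and in particular
\[
\|\nabla\psi\|_{L^p(B_{2r}\setminus B_r)}\le C r^{\frac{n}{p}-\frac{n+1}{2}}\|\psi\|_{L^q(B_{5r/2}\setminus B_{r/2})}=Cr^{\frac{2n-p(n+1)}{2p}}\|\psi\|_{L^q},
\]
which is the exponent in \eqref{eq:neck-W1p}. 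The lower-order part carries the factor $r^{\frac np-\frac{n-1}{2}}=r\cdot r^{\frac{2n-p(n+1)}{2p}}$, and it is bounded by the same right-hand side since $r\le\delta<1$.

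The main obstacle is the uniform smallness of $\widetilde b$, which is needed to absorb the first-order perturbation. This requires the neck parameters $R$ large and $\delta$ small, so the constants $\varepsilon$ and $C$ should be understood as uniform once $R\ge R_0$ and $\delta\le\delta_0$. If one insists on arbitrary $R$ and $\delta$, we would instead first shrink the annuli to sub-annuli where $\eta_k(r)$ is small.
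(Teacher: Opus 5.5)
Your proposal is correct and follows the paper's own proof: rescale to the unit annulus, observe that $\widetilde a,\widetilde b=O(\eta_k(r))$ are uniformly small once $R$ is large and $\delta$ small, rerun the $\varepsilon$-regularity bootstrap of Lemma~\ref{lem:epsilon-reg-fixed}, and scale back. Your explicit bookkeeping of the scaling exponents (in particular, that the zeroth-order part of the $W^{1,p}$ norm carries an extra factor $r\le 1$) usefully fills in the step the paper passes over with ``it follows.''
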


\begin{proof}
By using a trick of scaling as in Lemma \ref{lem:epsilon-reg-fixed}, we may assume that it holds on $B_{5/2}\setminus B_{1/2}$ that
\[
        D_{\R^n}\widetilde\psi=|\widetilde\psi|^{2/(n-1)}\widetilde\psi
        +\widetilde a(\widetilde y)\widetilde\psi+\widetilde b(\widetilde y)\nabla\widetilde\psi,
\]
where $r\widetilde y=y$, $\widetilde a$ and $\widetilde b$ are smooth functions satisfying
$\widetilde a(\widetilde y)=O(\eta_k(r))$ and $\widetilde b(\widetilde y)=O(\eta_k(r))$. By the conformal invariance we have
\[
        \int_{B_{5/2}\setminus B_{1/2}}|\widetilde\psi|^{2n/(n-1)}<\varepsilon.
\]
Then we can apply the argument in Lemma~\ref{lem:epsilon-reg-fixed} to show that
\begin{equation}\label{eq:scaled-neck-W1p}
        \|\widetilde\psi\|_{W^{1,p}(B_2\setminus B_1)}
        \le C\|\widetilde\psi\|_{L^{2n/(n-1)}(B_{5/2}\setminus B_{1/2})}.
\end{equation}
It follows that \eqref{eq:neck-W1p} holds.

For the reader's convenience, we give the details for the proof of \eqref{eq:scaled-neck-W1p} here. For simplicity of notations, we write $\psi,a,b$ for
$\widetilde\psi,\widetilde a,\widetilde b$. Choose a cutoff function
$\zeta\in C_c^\infty(B_{5/2}\setminus B_{1/2})$ with $\zeta\equiv1$ on
$B_2\setminus B_1$, $0\le\zeta\le1$, and $|\nabla\zeta|\le C$. Then
\[
        D_{\R^n}(\zeta\psi)=\zeta D_{\R^n}\psi+\nabla\zeta\cdot\psi
        =\zeta|\psi|^{2/(n-1)}\psi+a\zeta\psi+b\zeta\nabla\psi+\nabla\zeta\cdot\psi.
\]
By the ellipticity of the Dirac operator \(D_{\R^n}\),
\[
        \|\zeta\psi\|_{W^{1,p}(B_{5/2}\setminus B_{1/2})}
        \le C\left(\|\zeta\psi\|_{L^p(B_{5/2}\setminus B_{1/2})}
        +\|D_{\R^n}(\zeta\psi)\|_{L^p(B_{5/2}\setminus B_{1/2})}\right),
\]
which yields
\[
\begin{aligned}
\|\zeta\psi\|_{W^{1,p}(B_{5/2}\setminus B_{1/2})}
\le C\big(&\|\zeta\psi\|_{L^p(B_{5/2}\setminus B_{1/2})}
+\|\zeta|\psi|^{2/(n-1)}\psi\|_{L^p(B_{5/2}\setminus B_{1/2})}\\
&+\|a\zeta\psi\|_{L^p(B_{5/2}\setminus B_{1/2})}
+\|b\zeta\nabla\psi\|_{L^p(B_{5/2}\setminus B_{1/2})}
+\|\nabla\zeta\cdot\psi\|_{L^p(B_{5/2}\setminus B_{1/2})}\big).
\end{aligned}
\]
Notice that $a,b=O(\eta_k(r))$. For $1<p<n$, H\"older's inequality and the Sobolev inequality give
\[
\begin{aligned}
\|\zeta\psi\|_{W^{1,p}(B_{5/2}\setminus B_{1/2})}
&\le C\left(\|\psi\|_{L^p(B_{5/2}\setminus B_{1/2})}
+\|\zeta|\psi|^{2/(n-1)}\psi\|_{L^p(B_{5/2}\setminus B_{1/2})}\right)\\
&\quad +CO(\eta_k(r))\|\zeta\psi\|_{W^{1,p}(B_{5/2}\setminus B_{1/2})}\\
&\le C\left(\|\psi\|_{L^p(B_{5/2}\setminus B_{1/2})}
+\|\zeta\psi\|_{L^{\frac{np}{n-p}}(B_{5/2}\setminus B_{1/2})}
\|\psi\|_{L^{\frac{2n}{n-1}}(B_{5/2}\setminus B_{1/2})}^{\frac2{n-1}}\right)\\
&\quad +CO(\eta_k(r))\|\zeta\psi\|_{W^{1,p}(B_{5/2}\setminus B_{1/2})}\\
&\le C\left(\|\psi\|_{L^p(B_{5/2}\setminus B_{1/2})}
+\|\zeta\psi\|_{W^{1,p}(B_{5/2}\setminus B_{1/2})}
\|\psi\|_{L^{\frac{2n}{n-1}}(B_{5/2}\setminus B_{1/2})}^{\frac2{n-1}}\right)\\
&\quad +CO(\eta_k(r))\|\zeta\psi\|_{W^{1,p}(B_{5/2}\setminus B_{1/2})}.
\end{aligned}
\]
Since
\[
        \int_{B_{5/2}\setminus B_{1/2}}|\psi|^{2n/(n-1)}<\varepsilon,
\]
for sufficiently small $\varepsilon$ and $O(\eta_k(r))$ (recall that
$\eta_k(r)=O((r_k/r)^{\varepsilon_5}+(r/r_\infty)^{\varepsilon_5})$), there holds
\begin{equation}\label{eq:neck-absorb-restored}
        \|\zeta\psi\|_{L^{\frac{np}{n-p}}(B_{5/2}\setminus B_{1/2})}
        \le \|\zeta\psi\|_{W^{1,p}(B_{5/2}\setminus B_{1/2})}
        \le C\|\psi\|_{L^p(B_{5/2}\setminus B_{1/2})}.
\end{equation}
{If $\frac{2n}{n-1}<n$,} set $p_0=\frac{2n}{n-1}$ and define $p_{m+1}=\frac{np_m}{n-p_m}$. {If $\frac{2n}{n-1}\ge n$, start instead from any exponent $p_0<n$ below the critical exponent and then perform one more bootstrap step.} There exists $m$ such that
$p_m>n$. Iteratively applying \eqref{eq:neck-absorb-restored}, we obtain for any $1<p<n$,
\begin{equation}\label{eq:neck-lp-iteration-restored}
        \|\psi\|_{L^p(B_2\setminus B_1)}
        \le C\|\psi\|_{L^{2n/(n-1)}(B_{5/2}\setminus B_{1/2})}.
\end{equation}
Using \eqref{eq:neck-absorb-restored} again shows that \eqref{eq:neck-lp-iteration-restored} holds for any $p>1$. Moreover,
\[
\begin{aligned}
\int_{B_{5/2}\setminus B_{1/2}} |D_{\R^n}(\zeta\psi)|^p
&=\int_{B_{5/2}\setminus B_{1/2}} |\zeta D_{\R^n}\psi+\nabla\zeta\cdot\psi|^p\\
&\le C\int_{B_{5/2}\setminus B_{1/2}}
\left(|\psi|^{\frac{p(n+1)}{n-1}}+|\psi|^p+O(\eta_k(r))|\nabla(\zeta\psi)|^p\right),
\end{aligned}
\]
which, together with the ellipticity of the Dirac operator $D_{\R^n}$ and
\eqref{eq:neck-lp-iteration-restored}, implies
\[
        \|\psi\|_{W^{1,p}(B_2\setminus B_1)}
        \le C\|\psi\|_{L^{2n/(n-1)}(B_{5/2}\setminus B_{1/2})}.
\]
\end{proof}
Repeating the argument in Lemma \ref{lem:linear-eps}, we obtain the following result.
\begin{lemma}\label{lem:neck-linear}
Let $p>1$ and let $\psi:B_\delta\setminus B_{r_kR}\to{\mathbb S(\R^n)}$ be a solution of
\[
        D_{\R^n}\psi=|\phi|^{2/(n-1)}\psi+a(y)\psi+b(y)\nabla\psi,
\]
where $a(y)$ and $b(y)$ are smooth functions satisfying $a(y)=O(\eta_k(|y|)/|y|)$ and
$b(y)=O(\eta_k(|y|))$, and
\[
        \|\psi\|_{L^{2n/(n-1)}}+\||y|^{\frac{n-1}{2}}\psi\|_{L^\infty}\le C.
\]
Then
\[
        \|\psi\|_{W^{1,2}(B_{2r}\setminus B_r)}
        \le Cr^{-1}\|\psi\|_{L^2(B_{5r/2}\setminus B_{r/2})},
\]
where the constant $C>0$ depends only on $n$, $p$, and the bound for
$\|\psi\|_{L^{2n/(n-1)}}+\||y|^{\frac{n-1}{2}}\psi\|_{L^\infty}$.
\end{lemma}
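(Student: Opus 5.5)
The plan is to follow Lemma~\ref{lem:linear-eps}, but on a dyadic annulus and after a rescaling that makes the coefficients uniformly controlled. Set $\widetilde\psi(z)=r^{\frac{n-1}{2}}\psi(rz)$ and $\widetilde\phi(z)=r^{\frac{n-1}{2}}\phi(rz)$ for $z\in B_{5/2}\setminus B_{1/2}$, exactly as in \eqref{eq:critical-scaling}. The rescaled spinor then satisfies
\[
D_{\R^n}\widetilde\psi=|\widetilde\phi|^{\frac2{n-1}}\widetilde\psi+\widetilde a\,\widetilde\psi+\widetilde b\,\nabla\widetilde\psi,
\]
where $\widetilde a(z)=r\,a(rz)$ and $\widetilde b(z)=b(rz)$. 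By the hypotheses $a=O(\eta_k(|y|)/|y|)$ and $b=O(\eta_k(|y|))$, and since $\eta_k(|y|)$ is comparable to $\eta_k(r)$ for $|y|\in[r/2,5r/2]$, we get $\|\widetilde a\|_{L^\infty}+\|\widetilde b\|_{L^\infty}\le C\eta_k(r)$. This quantity is small for $r\in[r_kR,\delta]$ once $R$ is large and $\delta$ is small. For the potential, the weighted bound $\||y|^{\frac{n-1}{2}}\phi\|_{L^\infty}\le C$ (I read the hypothesis as applying to $\phi$, or to $\psi$ in the case $\phi=\psi$ used later) is exactly scale invariant. Hence $V:=|\widetilde\phi|^{\frac2{n-1}}$ is bounded in $L^\infty(B_{5/2}\setminus B_{1/2})$ by a constant independent of $r$ and $k$.

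Next I repeat the cutoff argument of Lemma~\ref{lem:linear-eps} on the annulus. Take $\zeta\in C^\infty_c(B_{5/2}\setminus B_{1/2})$ with $\zeta\equiv1$ on $B_2\setminus B_1$. Write $D_{\R^n}(\zeta\widetilde\psi)=\zeta V\widetilde\psi+\zeta\widetilde a\widetilde\psi+\zeta\widetilde b\nabla\widetilde\psi+\nabla\zeta\cdot\widetilde\psi$, and apply the $L^2$ elliptic estimate for $D_{\R^n}$ to the compactly supported spinor $\zeta\widetilde\psi$. The term $\zeta\widetilde b\nabla\widetilde\psi$ is split as $\widetilde b\nabla(\zeta\widetilde\psi)-\widetilde b(\nabla\zeta)\widetilde\psi$. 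Its first piece is absorbed into the left-hand side because $C\eta_k(r)\le1/2$. This yields
\[
\|\widetilde\psi\|_{W^{1,2}(B_2\setminus B_1)}\le C\|\widetilde\psi\|_{L^2(B_{5/2}\setminus B_{1/2})},
\]
with $C$ depending only on $n$ and the bound on $V$.

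Finally I scale back. Since $\|\widetilde\psi\|_{L^2}=r^{-\frac12}\|\psi\|_{L^2}$ and $\|\nabla\widetilde\psi\|_{L^2}=r^{\frac12}\|\nabla\psi\|_{L^2}$ on the corresponding annuli, the estimate gives $\|\nabla\psi\|_{L^2(B_{2r}\setminus B_r)}\le Cr^{-1}\|\psi\|_{L^2(B_{5r/2}\setminus B_{r/2})}$. For the zeroth-order part, $\|\psi\|_{L^2(B_{2r}\setminus B_r)}\le\|\psi\|_{L^2(B_{5r/2}\setminus B_{r/2})}\le Cr^{-1}\|\psi\|_{L^2(B_{5r/2}\setminus B_{r/2})}$, which holds because $r\le\delta\le1$. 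Adding the two bounds gives the claimed inequality.

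The only delicate point is making the smallness of $\widetilde b$ uniform, which is what allows the gradient term to be absorbed. Here that uniformity comes directly from the weight $\eta_k$ evaluated at $|y|\sim r$ in the range $r_kR\le r\le\delta$. This replaces the shrinking-ball trick used in Lemma~\ref{lem:linear-eps}.
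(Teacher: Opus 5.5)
Your proposal is correct and is essentially the paper's argument: the paper only says to repeat the cutoff-and-absorption proof of Lemma~\ref{lem:linear-eps}, and your rescaling to $B_{5/2}\setminus B_{1/2}$ is exactly the ``rescaled ball'' step that makes that proof uniform in $r$ and $k$. It gives $\|\widetilde a\|_{L^\infty}+\|\widetilde b\|_{L^\infty}\le C\eta_k(r)$ and a scale-invariant bound on the potential, and your reading of the weighted $L^\infty$ hypothesis as a bound on $\phi$ (not $\psi$) is the one needed for the application in Lemma~\ref{lem:neck-three-circle}.
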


\subsection{Three circles arguments on the degenerating neck regions}
Suppose
\[
        b_k^\alpha=-l_{k,b}^\alpha L,
        \qquad
        a_k^{\alpha+1}=-l_{k,a}^\alpha L
\]
for some universal constant $L>0$. Set
\[
        A_l=A_{k;e^{-lL},e^{-(l-1)L}}
        =B(x_{a,k},e^{-(l-1)L})\setminus B(x_{a,k},e^{-lL}),
        \qquad l_{k,a}^\alpha\le l<l_{k,b}^\alpha.
\]
By Theorem~\ref{thm:neck-coordinates}, for some $\widetilde L$,
\[
        A_l\subset \widetilde A_{l-1}\cup\widetilde A_l\cup\widetilde A_{l+1},
\]
where
\[
        \widetilde A_l=\{q\in M_k;\ e^{-l\widetilde L}\le |y(q)|\le e^{-(l-1)\widetilde L}\}.
\]
Let $\Sigma_l$ be the corresponding region of $\widetilde A_l$ in cylinder coordinates $(t,\theta)$ in the above. And let $\phi$ be the corresponding spinor on the cylinder $\Sigma$ to the spinor $\psi$ in the coordinates $(y)$.

\begin{lemma}\label{lem:neck-three-circle}
Let $\phi:\Sigma_{l-1}\cup\Sigma_l\cup\Sigma_{l+1}\to{\mathbb S(\Sigma)}$ satisfy
\[
        D_\Sigma\phi=|\phi|_{g_{\cyl}}^{2/(n-1)}\phi+\hat h,
\]
where $\hat h$ satisfies
\[
        |\hat h|\le C\eta_k(e^{-t})(|\phi|_{g_{\cyl}}+|\nabla_{\cyl}\phi|_{g_{\cyl}}).
\]
Then there exist constants $\varepsilon,\tau>0$ small enough such that if
$\|\phi\|_{L^{2n/(n-1)}(\Sigma_l)}<\varepsilon$ and
\[
        \max_{l-1,l,l+1}\|\hat h\|_{L^p(\Sigma_l)}\le \tau\|\phi\|_{L^2(\Sigma_l)},
\]
the following hold:
\begin{enumerate}[label=(\alph*)]
\item $\|\phi\|_{L^2(\Sigma_{l+1})}\le e^{-L/2}\|\phi\|_{L^2(\Sigma_l)}$ implies
$\|\phi\|_{L^2(\Sigma_l)}\le e^{-L/2}\|\phi\|_{L^2(\Sigma_{l-1})}$;
\item $\|\phi\|_{L^2(\Sigma_{l-1})}\le e^{-L/2}\|\phi\|_{L^2(\Sigma_l)}$ implies
$\|\phi\|_{L^2(\Sigma_l)}\le e^{-L/2}\|\phi\|_{L^2(\Sigma_{l+1})}$;
\item Either $\|\phi\|_{L^2(\Sigma_l)}\le e^{-L/2}\|\phi\|_{L^2(\Sigma_{l+1})}$ or
$\|\phi\|_{L^2(\Sigma_l)}\le e^{-L/2}\|\phi\|_{L^2(\Sigma_{l-1})}$.
\end{enumerate}
\end{lemma}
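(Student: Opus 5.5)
We argue by contradiction and compactness, exactly as in the proof of Lemma~\ref{lem:three-circle-perturbed}; the point to check is that the degenerating coefficients $\eta_k(e^{-t})$ do not spoil the compactness step. We prove (a); (b) and (c) follow the same pattern, with the corresponding pair of inequalities negated. Suppose (a) fails for every choice of $\varepsilon,\tau$. Then there are sequences $\varepsilon_j,\tau_j\to0$ and spinors $\phi_j$ on $\Sigma_{l-1}\cup\Sigma_l\cup\Sigma_{l+1}$ with the following properties. They solve $D_\Sigma\phi_j=|\phi_j|^{2/(n-1)}\phi_j+\hat h_j$, with $|\hat h_j|\le C\eta_{k_j}(e^{-t})(|\phi_j|+|\nabla\phi_j|)$. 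They satisfy $\|\phi_j\|_{L^{2n/(n-1)}}<\varepsilon_j$ on the three cylinders and $\|\hat h_j\|_{L^p}\le\tau_j\|\phi_j\|_{L^2(\Sigma_l)}$. Finally,
\[
\|\phi_j\|_{L^2(\Sigma_{l+1})}\le e^{-L/2}\|\phi_j\|_{L^2(\Sigma_l)},\qquad \|\phi_j\|_{L^2(\Sigma_l)}> e^{-L/2}\|\phi_j\|_{L^2(\Sigma_{l-1})}.
\]
Set $\widetilde\phi_j=\phi_j/\|\phi_j\|_{L^2(\Sigma_l)}$. This normalizes the middle cylinder, and the two inequalities bound the $L^2$ norms on $\Sigma_{l\pm1}$ by $e^{L/2}$ (and by $e^{-L/2}$ on $\Sigma_{l+1}$). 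So $\widetilde\phi_j$ is uniformly bounded in $L^2$ on all three cylinders.

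Next we obtain uniform local $W^{1,2}$ bounds. On each cylinder $\Sigma_m$, translated back to a Euclidean annulus $B_{2r}\setminus B_r$ through the map $F$ and the identity \eqref{eq: confdege}, the equation has the form required in Lemmas~\ref{lem:neck-eps-reg} and \ref{lem:neck-linear}. The potential is $|\phi_j|^{2/(n-1)}$, and the perturbation coefficients are $a=O(\eta_k/|y|)$ and $b=O(\eta_k)$. After rescaling to unit size these coefficients are $O(\eta_k(r))$, which is uniformly small since $\eta_k\le C$ on the neck and is small once $R$ is large and $\delta$ is small. Small energy and Lemma~\ref{lem:neck-eps-reg} give a uniform bound $\||y|^{\frac{n-1}{2}}\psi_j\|_{L^\infty}\le C\varepsilon_j^{(n-1)/2n}$, equivalently $\|\phi_j\|_{L^\infty}\to0$ on slightly shrunk cylinders. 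Because the equation is linear in $\psi$ once the potential $|\phi|^{2/(n-1)}$ is frozen, Lemma~\ref{lem:neck-linear} applies to the normalized spinor $\widetilde\phi_j$. It gives uniform $W^{1,2}_{\mathrm{loc}}$ bounds on a neighbourhood of $\Sigma_l$, and on compact subsets of the interior of $\Sigma_{l-1}\cup\Sigma_l\cup\Sigma_{l+1}$. The potential term $|\phi_j|^{2/(n-1)}\widetilde\phi_j$ tends to $0$ in $L^2$, because $\|\phi_j\|_{L^\infty}\to0$. The error term satisfies $\|\hat h_j\|_{L^p}/\|\phi_j\|_{L^2(\Sigma_l)}\le\tau_j\to0$.

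By Rellich, a subsequence converges strongly in $L^2_{\mathrm{loc}}$, and weakly in $L^2$ on each of the three cylinders, to some $\widetilde\phi$. Passing to the limit in the equation gives $D_\Sigma\widetilde\phi=0$, so $\widetilde\phi$ is a harmonic spinor. The normalization $\|\widetilde\phi\|_{L^2(\Sigma_l)}=1$ has to survive the limit. For this we apply the interior estimate on the slightly enlarged middle region $\Sigma_{l-1}\cup\Sigma_l\cup\Sigma_{l+1}$, whose outer cylinders have $L^2$ norm uniformly bounded. Weak lower semicontinuity on $\Sigma_{l\pm1}$ then yields $\|\widetilde\phi\|_{L^2(\Sigma_{l-1})}\le e^{L/2}$ and $\|\widetilde\phi\|_{L^2(\Sigma_{l+1})}\le e^{-L/2}$. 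These say
\[
\|\widetilde\phi\|^2_{L^2(\Sigma_l)}=1\ge \tfrac12\Big(e^{-L}\|\widetilde\phi\|^2_{L^2(\Sigma_{l+1})}+e^{-L}\|\widetilde\phi\|^2_{L^2(\Sigma_{l-1})}\Big),
\]
and since $e^{-L}\cdot e^{-L}\le 1$ the right-hand side is at most $\tfrac12(e^{-2L}+1)<1$. This contradicts Lemma~\ref{lem:three-circle-harmonic}, which needs $n\ge3$; we have $n\ge4$ here. For (c), both alternatives fail, so $\|\widetilde\phi\|^2_{L^2(\Sigma_{l\pm1})}<e^{L}$. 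The right-hand side of Lemma~\ref{lem:three-circle-harmonic} is then strictly less than $1$, which is again a contradiction.

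The main obstacle is the compactness step in the second paragraph. First, the constants in Lemmas~\ref{lem:neck-eps-reg} and \ref{lem:neck-linear} must be uniform along the degenerating neck. Second, strong $L^2$ convergence must hold on all of $\Sigma_l$, up to its boundary circles, so that the normalization does not leak. For the first point we use that the rescaled coefficients are bounded by $C\eta_k(r)$, independent of $k$ and $r$. For the second, we apply the estimates on overlapping annuli, which is exactly why they are stated on $B_{5r/2}\setminus B_{r/2}$. One more technical point is that $\Sigma_l$ is defined through the $(y)$-coordinates with step $\widetilde L$ rather than $L$. We take $L$ in the statement to be this $\widetilde L$, and choose it larger than the $L_0$ of Lemma~\ref{lem:three-circle-harmonic}.
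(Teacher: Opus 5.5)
Your proposal is correct and follows the paper's route. The paper proves this lemma by rerunning the contradiction--compactness argument of Lemma~\ref{lem:three-circle-perturbed}: normalize on $\Sigma_l$, obtain uniform $W^{1,2}_{\mathrm{loc}}$ bounds, pass to a harmonic limit, and contradict Lemma~\ref{lem:three-circle-harmonic}, with Lemmas~\ref{lem:neck-eps-reg} and \ref{lem:neck-linear} replacing the fixed-domain estimates, exactly as you do. Your version is more careful in two respects: you write the correct negation of (a) and check the limiting inequalities case by case, and, as the paper's argument implicitly does by inheriting the hypothesis of Lemma~\ref{lem:three-circle-perturbed}, you use $L^{2n/(n-1)}$-smallness on all three cylinders $\Sigma_{l-1},\Sigma_l,\Sigma_{l+1}$ rather than only on $\Sigma_l$ as the statement literally says.
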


The lemma can be proved by the same argument as in the proof of Lemma~\ref{lem:three-circle-perturbed}, except that we replace Lemma~\ref{lem:epsilon-reg-fixed} and Lemma~\ref{lem:linear-eps} with Lemma~\ref{lem:neck-eps-reg} and Lemma~\ref{lem:neck-linear}.

Noticing that
\[
        \eta_k(r)=O\left(\left(\frac{r_k}{r}\right)^{\varepsilon_5}+
        \left(\frac{r}{r_\infty}\right)^{\varepsilon_5}\right),
\]
the following lemma is a direct corollary of Lemma~\ref{lem:neck-three-circle}.

\begin{lemma}\label{lem:neck-decay}
Let $\phi:\Sigma\to{\mathbb S(\Sigma)}$ satisfy
\[
        D_\Sigma\phi=|\phi|_{g_{\cyl}}^{2/(n-1)}\phi+\hat h,
\]
where $\hat h$ satisfies
\[
        |\hat h|\le C\eta_k(e^{-t})(|\phi|_{g_{\cyl}}+|\nabla_{\cyl}\phi|_{g_{\cyl}}),
\]
and assume that $k$, $R$ and $1/\delta$ are sufficiently big and for every
$l_{k,a}^\alpha\le l<l_{k,b}^\alpha$,
$\|\phi\|_{L^{2n/(n-1)}(\Sigma_l)}<\varepsilon$. Then the following estimate holds:
\[
        \|\phi\|_{L^2(\Sigma_l)}^2
        \le C\varepsilon^2\left(e^{-\vartheta(l-l_{k,a}^\alpha)\widetilde L}
        +e^{-\vartheta(l_{k,b}^\alpha-l)\widetilde L}\right).
\]
Here $\vartheta=\min(1,2\varepsilon_5)$.
\end{lemma}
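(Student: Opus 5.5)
The plan is the standard three-circle iteration argument, run with the perturbed three-circle Lemma~\ref{lem:neck-three-circle} in place of Lemma~\ref{lem:three-circle-harmonic}. The decay rate picks up the weight $\eta_k$ through the size of $\hat h$.

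\textbf{Step 1: bounding $\hat h$.} On each $\Sigma_l$ I will bound $\hat h$ in terms of $\|\phi\|_{L^2}$ of the neighbouring pieces. Since $t=-\log|y|$, we have
\[
\eta_k(e^{-t})\le C\bigl(e^{-\varepsilon_5(t_k-t)}+e^{-\varepsilon_5(t-t_\infty)}\bigr),
\]
with $t_k=-\log r_k$ and $t_\infty=-\log r_\infty$. By Lemmas~\ref{lem:neck-eps-reg} and \ref{lem:neck-linear}, transported to the cylinder via \eqref{eq: confdege}, together with the smallness $\|\phi\|_{L^{2n/(n-1)}}<\varepsilon$, we get
\[
\|\hat h\|_{L^p(\Sigma_m)}\le C\sup_{\Sigma_m}\eta_k\,\|\phi\|_{L^2(\Sigma_{m-1}\cup\Sigma_m\cup\Sigma_{m+1})}.
\]

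\textbf{Step 2: the dichotomy at each index.} Fix $l$. Either the hypothesis of Lemma~\ref{lem:neck-three-circle} holds, namely $\max\|\hat h\|_{L^p}\le\tau\|\phi\|_{L^2(\Sigma_l)}$, or it fails. If it fails, then $\|\phi\|_{L^2(\Sigma_l)}$ is dominated by $C\tau^{-1}\eta_k\cdot(\text{neighbouring }L^2\text{ norms})$. Since $\phi$ is bounded pointwise by $C\varepsilon$ (by $\varepsilon$-regularity), those neighbouring norms are at most $C\varepsilon$. So in the failing case
\[
\|\phi\|_{L^2(\Sigma_l)}\le C\varepsilon\bigl(e^{-\varepsilon_5(l_{k,b}^\alpha-l)\widetilde L}+e^{-\varepsilon_5(l-l_{k,a}^\alpha)\widetilde L}\bigr).
\]
Here I use that $R$ large and $\delta$ small make $\eta_k$ small on the neck, and I compare the weight exponents with the neck endpoints. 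The endpoints of $I_k^\alpha$ sit no closer to $r_k$ and $r_\infty$ than $r_kR$ and $\delta$, so the weight decays exponentially in the distance to the ends. Squaring this gives a bound with exponent $2\varepsilon_5$, which is where $\vartheta=\min(1,2\varepsilon_5)$ comes from.

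\textbf{Step 3: propagating decay.} In the good case, parts (a)--(c) of Lemma~\ref{lem:neck-three-circle} give the usual monotonicity structure. The sequence $\|\phi\|_{L^2(\Sigma_l)}$ first decreases by the factor $e^{-L/2}$ per step, and then (after at most one turning point) increases by $e^{L/2}$ per step. Starting from the boundary values, which are $\le C\varepsilon$ by $\varepsilon$-regularity at the ends, I will iterate this to get
\[
\|\phi\|^2_{L^2(\Sigma_l)}\le C\varepsilon^2\bigl(e^{-(l-l_{k,a}^\alpha)\widetilde L}+e^{-(l_{k,b}^\alpha-l)\widetilde L}\bigr).
\]
Some indices are bad ones from Step 2. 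I will handle them by induction from each end: at a bad index the bound is already of the required form with rate $2\varepsilon_5$, and the three-circle recursion restarts from there. Combining the two regimes gives the rate $\vartheta=\min(1,2\varepsilon_5)$.

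\textbf{Main obstacle.} The delicate point is the bookkeeping in Step 3: chains of good indices are interrupted by bad ones, and I must ensure the constant $C$ does not accumulate. I would first try a comparison-function argument. Define
\[
f(l)=C\varepsilon^2\bigl(e^{-\vartheta(l-l_{k,a}^\alpha)\widetilde L}+e^{-\vartheta(l_{k,b}^\alpha-l)\widetilde L}\bigr),
\]
and show that $\|\phi\|^2_{L^2(\Sigma_l)}\le f(l)$ cannot first fail at an interior index. At a good index, Lemma~\ref{lem:neck-three-circle}(c) forces comparison with a neighbour that already satisfies the bound; this works because $\vartheta\le 1$ and $L$ is large. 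At a bad index, Step 2 gives the bound directly.
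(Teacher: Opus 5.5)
Your proposal is correct and is the standard argument the paper has in mind when it calls this lemma a direct corollary of Lemma~\ref{lem:neck-three-circle}. At indices where the $\hat h$-hypothesis fails, the weight $\eta_k$ decays like $e^{-\varepsilon_5\cdot(\text{distance to the ends of } I_k^\alpha)}$ with small prefactor $R^{-\varepsilon_5}+(\delta/r_\infty)^{\varepsilon_5}$, giving rate $2\varepsilon_5$; elsewhere the three-circle alternative gives rate $1$.

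The one point to tighten is Step 3. Alternative (c) does not let you choose the neighbour you compare with, so close the argument by one of two equivalent routes:
\begin{enumerate}
\item Take a maximiser of $\|\phi\|^2_{L^2(\Sigma_l)}/f(l)$, using $f(l\pm1)\le e^{\vartheta\widetilde L}f(l)$, $\vartheta\le1$, and reading the three-circle factor as $e^{-\widetilde L}$ on pieces of length $\widetilde L$.
\item Observe that a failing good index forces a failing good interior neighbour with strictly larger $L^2$ norm, and such a chain cannot continue in a finite index set.
\end{enumerate}
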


Finally we give the proof of Theorem~\ref{thm:main} by an argument similar to Theorem~\ref{thm:fixed}.

\begin{proof}[Proof of Theorem~\ref{thm:main}]
The proof of Theorem~\ref{thm:main} is reduced to proving
\begin{equation}\label{eq:deg-neck-zero}
        \lim_{\delta\to0}\lim_{R\to\infty}\lim_{k\to\infty}
        \int_{\bar I_k^\alpha}|\psi_k|^{\frac{2n}{n-1}}\,dV_{g_k}=0.
\end{equation}
For any $\varepsilon>0$, we know when $j$, $R$ and $1/\delta$ are big enough,
\[
        \|\phi_k\|_{L^{2n/(n-1)}(\Sigma_l)}<\varepsilon.
\]
By Lemma~\ref{lem:neck-eps-reg}, $\|\phi_k\|_{L^\infty(\Sigma)}\le C\varepsilon$. Let
$\Sigma_k^\alpha$ be the region in the cylinder coordinates corresponding to $\bar I_k^\alpha$.
Then, using Lemma~\ref{lem:neck-decay},
\[
\begin{aligned}
\int_{\bar I_k^\alpha}|\psi_k|^{\frac{2n}{n-1}}\,dV_{g_k}
&\leq C\int_{\Sigma_k^\alpha}|\phi_k|^{\frac{2n}{n-1}}\,dV_{g_{\cyl}}\\
&\le C\int_{\Sigma_k^\alpha}|\phi_k|^2\,dV_{g_{\cyl}}\\
&\le C\varepsilon^2
\sum_{l=l_{k,a}^\alpha}^{l_{k,b}^\alpha}
\left(e^{-\vartheta(l-l_{k,a}^\alpha)\widetilde L}
+e^{-\vartheta(l_{k,b}^\alpha-l)\widetilde L}\right).
\end{aligned}
\]
Then \eqref{eq:deg-neck-zero} follows immediately. Combining this with the bubble-neck decomposition gives the energy identity in Theorem~\ref{thm:main}.
\end{proof}

\end{document}